\algnewcommand{\IIf}[1]{\State\algorithmicif\ #1\ \algorithmicthen}
\title{Analysis of spectral clustering algorithms for community detection: the general bipartite setting}% community detection}
\author{Zhixin Zhou and Arash A. Amini}
\begin{document}

\maketitle
\begin{abstract}
	We consider  spectral clustering algorithms for community detection under a general bipartite stochastic block model (SBM).
	%We consider the analysis of spectral clustering algorithms for community detection under a stochastic block model (SBM). 
	A modern spectral clustering algorithm  consists of three steps: (1) regularization of an appropriate adjacency or Laplacian matrix (2) a form of spectral truncation and (3) a \kmeans type algorithm in the reduced spectral domain. We focus on the adjacency-based spectral clustering and for the first step,  propose a new data-driven regularization  that can restore the concentration of the adjacency matrix even for the sparse networks. This result is based on recent work on regularization of random binary matrices, but avoids using unknown population level parameters, and instead estimates the necessary quantities from the data. We also propose and study a novel variation of the spectral truncation step and show how this variation changes the nature of the misclassification rate in a general SBM. We then show how the consistency results can be extended to models beyond SBMs, such as inhomogeneous random graph models with approximate clusters, including a  graphon clustering problem, as well as general sub-Gaussian biclustering. A theme of the paper is providing a better understanding of the analysis of spectral methods for community detection and establishing consistency results, under fairly general clustering models and for a wide regime of degree growths, including sparse cases where the average expected degree grows arbitrarily slowly.%, with an emphasis on the bipartite setting which has received less theoretical consideration.
	
%	By varying each step, one can obtain different spectral algorithms. In light of the recent developments in refining consistency results for the spectral clustering, we identify the necessary bounds at each of these three steps, and then derive and compare consistency results for some existing spectral algorithms as well as a new variant that we propose. The focus of the paper is on providing a better understanding of the analysis of spectral methods for community detection, with an emphasis on the bipartite setting which has received less theoretical consideration. We show how the variations in the spectral truncation step reflects in the consistency results under a general SBM.
%	%As a consequence, we recover some known results and obtain some new ones as well. 
%	We also investigate the necessary bounds for the \kmeans step in some detail, allowing one to replace this step with any algorithm (\kmeans type or otherwise) that guarantees the necessary bound. We discuss some of the neglected aspects of the bipartite setting, e.g., the role of the mismatch between the communities of the two sides on the performance of spectral methods. Finally, we show how the consistency results can be extended beyond SBMs to the problem of clustering inhomogeneous random graph models that can be approximated by  SBMs in a certain sense.  
	
%	The paper studies consistency results for spectral clustering in the context of community detection with an emphasize on the general setup of bipartite clustering. \aaa{We need to change this.}
	
	\medskip
	\textbf{Keywords:} Spectral clustering; bipartite networks; stochastic block model; regularization of random graphs; community detection; sub-Gaussian biclustering; graphon clustering.
\end{abstract}

\section{Introduction}\label{sec:intro}

Spectral clustering is one of the most popular and successful approaches to clustering and has appeared in various contexts in statistics and computer science among other disciplines. The idea generally applies when one can define a similarity matrix between pairs of objects to be clustered~\cite{ng2002spectral,von2007tutorial,von2008consistency}. Recently,  there has been a resurgence of interest in the spectral approaches and their analysis in the context of network clustering%where the goal is to cluster the nodes in a network
~\cite{boppana1987eigenvalues,McSherry,dasgupta2006spectral,coja2010graph,Tomozei2011,rohe2011spectral,balakrishnan2011noise,chaudhuri2012spectral,fishkind2013consistent,qin2013regularized,Joseph2013,Krzakala2013,lei2015consistency,yun2014accurate,yun2014community,Binkiewicz2014, lyzinski2014perfect, chin2015stochastic,gao2017achieving}. %, which is often given as an adjacency matrix.
 The interest has been partly fueled by the recent  activity in understanding statistical network models for clustering and community detection, and in particular by the flurry of theoretical work on the stochastic block model (SBM)---also called the planted partition model---and its variants~\cite{abbe2017community}.
 
  There has been  significant recent  advances in analyzing spectral clustering approaches in SBMs. 
  We start by identifying the main components of the analysis in Section~\ref{sec:analysis:steps}, and then show how variations  can be introduced at each step to obtain improved consistency results and novel algorithms. We will work in the general bipartite setting which has received comparatively less attention, but most results in the paper can be easily extended to the (symmetric) unipartite models (cf. Remark~\ref{rem:sym:case}).  It is worth noting that clustering bipartite SBMs is closely related to the biclustering~\cite{hartigan1972direct} and  co-clustering~\cite{dhillon2001co,rohe2012co} problems. 
  
%  The  primary goal of this paper is to make these work more accessible by identifying and expanding on the key components common to the analysis, with emphasis on aspects that have received less attention. As a by-product, we also obtain a new algorithm, and  new results for a variant of the spectral clustering recently introduced in~\cite{yun2014community,Gao2015a}. We will work in the general bipartite setting which has received comparatively little attention. Our work is in fact partly motivated by the lack of explicit results in the bipartite case for various spectral approaches we consider in this paper. As we will see, analyzing the bipartite case introduces new challenges, better reveals the nature of some steps in the analysis (cf. Section~\ref{sec:consist:res}) and can produce results for the usual unipartite case as a special case (cf. Remark~\ref{rem:sym:case}).
 
 %There are three general steps in analyzing spectral clustering approaches, following the three stages of such algorithms. These steps can be identified as 
 A modern  spectral clustering algorithm often consists of three steps:
 \begin{enumerate*}[label=(\alph*)]
 	\item the regularization and concentration of the adjacency matrix (or the Laplacian)
 	\item the spectral truncation step, and
 	\item the \kmeans step.
 \end{enumerate*}
 By using variations in each step one obtains different spectral algorithms, which is then reflected in the variations in the consistency results. 
 
 The regularization step is fairly recent and is motivated by the observation that proper regularization significantly improves the performance of  spectral methods in sparse networks~\cite{chaudhuri2012spectral,amini2013pseudo,Joseph2013,Le2015,chin2015stochastic}. In particular, regularization restores the concentration of the adjacency matrix (or the Laplacian) around its expectation in the 
 \emph{sparse regime}, where the average degree of the network is constant or grows very slowly with the  number of nodes. % goes to infinity. %This allows one to obtain consistency results for spectral clustering approaches in the sparse regimes. 
 In this paper, building on these recent advances, we introduce a novel regularization scheme for the adjacency matrix that is fully data-driven and avoids relying on unknown quantities such as maximum expected degrees (for rows and columns) of the network. This regularization scheme is introduced as Algorithm~\ref{alg:deg:reg} in Section~\ref{sec:concent} and we show that under a general SBM it achieves the same concentration bound (Theorem~\ref{thm:concent:data:driven}) as its oracle counterpart.

 For the spectral truncation step,  we will consider three variations, one of which (Algorithm~\ref{alg:scone}) is the common approach of keeping the top $k$ leading eigenvectors as columns of the matrix passed to the \kmeans step. In this traditional approach, the spectral truncation can be viewed as obtaining a low-dimensional representation of the data, suitable for an application of  simple \kmeans type algorithms. We also consider a recent variant (Algorithm~\ref{alg:scrr}) proposed in~\cite{yun2014community,gao2017achieving}, in which the spectral truncation step acts more as a denoising step. We then propose a third alternative (Algorithm~\ref{alg:scerr} in Section~\ref{sec:consist:res}) which combines the benefits of both approaches while improving the computational complexity. One of our novel contributions is to derive consistency results for Algorithms~\ref{alg:scrr} and~\ref{alg:scerr}, under a general SBM, showing that the behavior of the two algorithms is the same (but different than Algorithm~\ref{alg:scone}) assuming that the \kmeans step satisfies a property we refer to as \emph{isometry invariance} (Theorems~\ref{thm:scrr} and~\ref{thm:scerr}).  %of the method of~\cite{yun2014community,Gao2015a}.
 
  In the final step of spectral clustering, one runs a \kmeans type algorithm on the output of the truncation step. We discuss this step in some detail since it is often  mentioned briefly in the analyses of spectral clustering, with the exception of a few recent works~\cite{lei2015consistency,gao2017achieving, gao2018community}. By the \kmeans step, we do not necessarily mean the solution of the well-known \kmeans problem, although, this step is usually implemented by an algorithm that approximately minimizes the \kmeans objective. We will consider the \kmeans step in some generality, by introducing the notation of a \emph{\kmeans matrix} (cf. Section~\ref{sec:kmeans:step}). The goal of the final step of spectral clustering is to obtain a \kmeans matrix which is close to the output of the truncation step. We characterize sufficient conditions on this approximation so that the overall algorithm produces consistent clustering. Any approach that satisfies these conditions can be used in the \kmeans step, even if it is not an approximate \kmeans solver. % (one such example is discussed in Appendix~\ref{app:kmeans:replace}). 
  
  Most of the above ideas extend beyond SBMs and, in Section~\ref{sec:exten}, we consider various extensions. We first consider some unique aspects of the bipartite setting, for example, the possibility of having clusters only on one side of the network, or having more clusters on one side than the rank of the expected adjacency matrix. We then show how the results extend to general sub-Gaussian biclustering (Section~\ref{sec:gen:subg}) and general inhomogeneous random graphs with approximate clusters (Section~\ref{sec:inhom:graphs}).
  
  %In order to the keep the discussion focused, we consider the ordinary bipartite SBMs and adjacency-based spectral clustering. The results can be  extended to Laplacian based approaches, and SBM variants such as the degree-corrected one. We also note that the problem of clustering bipartite SBMs is closely related to the biclustering problem, perhaps first introduced in~\cite{hartigan1972direct}, and that of co-clustering~\cite{dhillon2001co,rohe2012co}. 
  
  The organization of the rest of the paper is as follows: After introducing  some notation in Section~\ref{sec:notation}, we discuss the general (bipartite) SBM in Section~\ref{sec:SBM}. An outline of the analysis is given in Section~\ref{sec:analysis:steps}, which also provides the high-level intuition of why the spectral clustering works in SBMs and what each step will achieve in terms of guarantees on its output. 
  %Note that we have elected to start with analysis before giving explicit algorithms, since the focus of the present paper is to better understand the mechanics of the analysis, rather than focusing on a particular algorithm. 
  This section provides a typical blueprint theorem on consistency (Theorem~\ref{thm:sc:prototype} in Section~\ref{sec:analysis:sketch}) which serves as a prelude to later consistency results. The rest of Section~\ref{sec:analysis:steps} provides the details of the last two steps of the analysis sketched in Section~\ref{sec:analysis:sketch}. The regularization and concentration (the first step) is detailed in Section~\ref{sec:concent} where we also introduce our data-driven regularization.
  We then give explicit algorithms and their corresponding consistency results in Section~\ref{sec:consist:res}. Extensions of the results are discussed in Section~\ref{sec:exten}. Some simulations  showing the effectiveness of the regularization are provided in Section~\ref{sec:sim}.

 	\paragraph{Related work.} There are numerous papers discussing aspect of spectral clustering and its analysis. %, including but not limited to those mentioned earlier. 
 	Our paper is mostly inspired by recent developments in the field, especially by the consistency results of~\cite{lei2015consistency,yun2014accurate,chin2015stochastic,gao2017achieving} and concentration results for the regularized adjacency (and Laplacian) matrices such as~\cite{Le2015,bandeira2016sharp}. Theorem~\ref{thm:scone} on the consistency of the typical adjacency-based spectral clustering algorithm---which we will call \scone---is generally known~\cite{lei2015consistency,gao2017achieving}, though our version is slightly more general; see Remark~\ref{rem:cond:relax}. The spectral algorithm \scrr is proposed and analyzed in~\cite{yun2014community,gao2017achieving,gao2018community} for special cases of the SBM (and its degree-corrected version); the new consistency result we give for \scrr is for the general SBM and reveals the contrast with \scone. Previous analyses did not reveal this difference due to focusing on the special case; see Examples~\ref{exa:sbpp} and~\ref{exa:diff} in Section~\ref{sec:consist:res} for details.  We also propose the new \scerr which has the same performance as \scrr (assuming a proper \kmeans step) but is much more computationally efficient. The results that we prove for \scrr and \scerr can be recast in terms of the \emph{mean parameters} of the SBM (in contrast to \scone), as demonstrated in Corollary~\ref{cor:mean:param} of Section~\ref{sec:res:mean:param}. For an application of this result, we refer to our work on optimal bipartite  network clustering~\cite{pl-bipartite}.

\subsection{Notation} \label{sec:notation}
\paragraph{Orthogonal matrices.}
We write $\ort{n}k$ for the set of $n \times k$ matrices with orthonormal columns. The condition $k \le n$ is implicit in defining $\ort{n}k$. The case $\ort{n}n$ is the set of orthogonal matrices, though with some abuse of terminology we also refer to matrices in $\ort{n}k$ as orthogonal even if $k < n$. Thus, $Z \in \ort{n}{k}$ iff $Z^T Z = I_k$.
We also note that $Z \in \ort{n}{k_1}$ and $U \in \ort{k_1}{k}$ implies $Z U \in \ort{n}k$. The following holds:
\begin{align}\label{eq:ort:isom:1}
	\norm{U x}_2 = \norm{x}_2, \quad \forall x \in \reals^k, \; U \in \ort{k_1}{k}, 
\end{align}
for any $k_1 \ge k$. On the other hand, 
\begin{align}\label{eq:ort:isom:2}
	\norm{ U^T x}_2 \le  \norm{x}_2, \quad \forall x \in \reals^{k_1}, \; U \in \ort{k_1}{k},
\end{align}
where equality holds for all $x \in \reals^{k_1}$, iff $k_1 = k$. To see~\eqref{eq:ort:isom:2}, let $u_1,\dots,u_{k} \in \reals^{k_1}$ be the columns of $U$, constituting an orthonormal sequence which can be completed to an orthonormal basis by adding say $u_{k+1},\dots,u_{k_1}$. Then, $\norm{ U^T x}_2^2 = \sum_{j=1}^{k} \ip{u_j,x}^2 \le \sum_{j=1}^{k_1} \ip{u_j,x}^2 = \norm{x}_2^2$.% (\aaa{In the language of functional analysis, $U$ is a partial isometry form $\reals^{k}$ to $\reals^{k_1}$.})

\paragraph{Membership matrices and misclassification.} We let $\hr(n,\kk)$ denote the set of \emph{hard} cluster labels: $\{0,1\}$-valued $n \times \kk$ matrices where each row has exactly a single 1. A matrix $Z \in \hr(n,\kk)$ is also called a membership matrix, where row $i$ is interpreted as the membership of node $i$ to one of $k$ clusters (or communities). Here we implicitly assume that we have a network on nodes in $[n] =\{1,\dots,n\}$, and there is a latent partition of $[n]$ into $k$ clusters. In this sense, $Z_{ik} = 1$ iff node $i$ belongs to cluster $k$. Given, two membership matrices $Z,Z' \in \hr(n,\kk)$, we can consider the average misclassification rate between them, which we denote as $\Misb(Z,Z')$: Letting $z_i^T$ and $(z_i')^T$ denote the $i$th row of $Z$ and $Z'$ respectively, we have
\begin{align}\label{eq:miss:avg}
	\Misb(Z,Z') := \min_Q  \frac1n \sum_{i=1}^n 1\{z_i \neq Q z'_i\}
\end{align}
where the minimum is taken over $k\times k$ permutations matrices $Q$. We also let $\Mis_r(Z,Z')$ be the misclassification rate between the two, over the $r$th cluster of $Z$, that is, $\Mis_r(Z,Z') =  \frac1{n_r} \sum_{i:\; z_i = r} 1\{z_i \neq Q^* z'_i\}$ where $n_r = \sum_{i=1}^n 1\{z_i = r\}$ is the size of the $r$th cluster of $Z$, and $Q^*$ is the optimal permutation in~\eqref{eq:miss:avg}. Note that in contrast to $\Misb$, $\Mis_r$ is not symmetric in its two arguments. %\zz{$\Misb$ should be symmetric.} \aaa{Yes, please fix.} 
We also write $\Mis_\infty := \max_r \Mis_r$. These definitions can be extended to misclassification rates between \kmeans matrices introduced in Section~\ref{sec:kmeans:step}.

%\aaa{and misclassification rates (average,max)}

%\paragraph{\kmeans matrices.}

\section{Stochastic Block Model}\label{sec:SBM}
We consider the general, not necessarily symmetric, Stochastic Block Model (SBM) with bi-adjacency matrix $A \in \{0,1\}^{n_1 \times n_2}$. We assume throughout that $n_2 \ge n_1$, without loss of generality.  We have membership matrices $Z_r \in \hr(n_r,\kk_r)$ for each of the two sides $r=1,2$, where $\kk_r \le n_r$ denotes the number of communities on side $r$. Each element of $A$ is an independent draw from a Bernoulli variable, and
\begin{align}\label{eq:nonsym:mean:def}
P := \ex[A] = Z_1 B Z_2^T, \quad B = \frac{\Psi}{\sqrt{n_1 n_2}}
\end{align}
where $B \in [0,1]^{\kk_1\times \kk_2}$ is the connectivity--- or the edge probability---matrix, and $\Psi$ is a rescaled version. We also use the notation
\begin{align}\label{eq:matrix:ber:defn}
	A \sim \ber(P) \;\iff \;A_{ij} \sim \ber(P_{ij}), \; \text{independent across  $(i,j) \in [n_1]\times [n_2]$.}
\end{align}
Classical SBM which we refer to as \emph{symmetric SBM}  corresponds to the following modifications:
\begin{enumerate}[label=(\alph*)]
	\item $A$ is assumed to be symmetric: Only the upper diagonal elements are drawn independently and the bottom half is filled symmetrically. For simplicity, we allow for self-loops, i.e., draw the diagonal elements from the same model. This will have a negligible effect in the arguments.
	
	\item $n_1 = n_2 =n, \; \kk_1 = \kk_2 = \kk, \; Z_1 = Z_2 = Z$.
	
	\item $B$ is assumed symmetric. 
\end{enumerate}
We note that~\eqref{eq:nonsym:mean:def} still holds over all the elements.
Directed SBM is also a special case, where~(b) is assumed but not~(a) or~(c). That is,  $A$ is not assumed to be symmetric and all the entries are independently drawn, while $B$  may or may not be symmetric.
% \begin{enumerate}%[label=(\alph*)] 
% 	\item[(a')] $A$ is not assumed to be symmetric. All the entries are independently drawn.$B$  may or may not be assumed symmetric.
% 	
% 	%\item[(b)] $n_1 = n_2 =n$, $\kk_1 = \kk_2 = \kk$, $Z_1 = Z_2 = Z$. 
% 	%We note that~\eqref{eq:nonsym:mean:def} still holds (over all elements).
% \end{enumerate}
%In addition, (b) holds in directed SBM, but $B$  may or may not be assumed symmetric.

\medskip
We refer to $P$ as the \emph{mean matrix}  and note that it is of rank at most $\kk := \min\{\kk_1, \kk_2\}$. %\zz{So $k$ is just an upper bound of rank? I think this affects how we apply Davis-Kahan and not easy for reader to understand. Can we only discuss rank smaller than $\min\{\kk_1, \kk_2\}$ in Section 6.2?} 
Often $k \ll n_1,n_2$, that is $P$ is a low-rank matrix which is the key in why spectral clustering works well for SBMs. However, the case where either $k_1 \gtrsim n_1$ or $k_2 \gtrsim n_2$ is allowed. (Here, $k_1 \gtrsim n_1$ means $k_1 \ge c n_1$ for some universal constant $c > 0$.) An extreme example of such setup can be found in Section~\ref{sec:one:cluster}.

%Let us write $P$ in a form which is more suitable for understanding its spectral properties. 
We let $N_r = \diag(n_{r1},\dots,n_{rk_r})$ for $r=1,2$ where $n_{rj}$ is the size of the $j$th cluster of $Z_r$; that is, $N_r$ is a diagonal matrix whose diagonal elements are the sizes of the clusters on side $r$. We also consider the normalized version of $N_r$, 
\begin{align}\label{eq:clust:prop:def}
\Nb_r := N_r / n_r = \diag(\pi_{r1},\dots,\pi_{r\kk_r}), %\quad \pi_{rj} := n_{rj}/n_r,
\end{align}
collecting the cluster proportions $ \pi_{rj} := n_{rj}/n_r$.
Let us define 
\begin{align*}
\Bb := N_1^{1/2} B N_2^{1/2} = \Nb_1^{1/2} \Psi \Nb_{2}^{1/2}.
\end{align*}
For sparse graphs, we expect $\Nb_r$ and $\Psi$ to remain stable as $n_r \to \infty$, hence $\Bb$ remains stable; in contrast, the entries of $B$ itself vanish under scaling~\eqref{eq:nonsym:mean:def}. See Remark~\ref{rem:scaling} below for details. Throughout the paper, barred parameters refer to quantities that remain stable or slowly vary with $n_r \to \infty$. The following lemma is key in understanding spectral clustering for SBMs:
\begin{lem}\label{lem:P:svd}
	Assume that $\Bb = \Usi \Sigma \Vsi^T$ is the reduced SVD of $\Bb$, where $\Usi \in \ort{\kk_1}{\kk}, \Vsi \in \ort{\kk_2}{\kk}$, $\Sigma = \diag(\sigma_1,\dots,\sigma_\kk)$, and $\kk = \min\{\kk_1,\kk_2\}$. Then,
	\begin{align}\label{eq:P:svd}
	P = (\Zb_1 \Usi) \,\Sigma\, (\Zb_2 \Vsi)^T
	\end{align}
	is the reduced SVD of $P$ where $\Zb_r := Z_r N_r^{-1/2}$  is itself an orthogonal matrix, $\Zb_r^T \Zb_r = I_{\kk_r}$.
%	\begin{enumerate}[label=(\alph*)]
%		\item 
%	\end{enumerate}
\end{lem}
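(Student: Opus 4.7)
The plan is to verify the claimed factorization by a direct algebraic manipulation and then check that it satisfies the defining properties of the reduced SVD. There are really two things to establish: (i) $\Zb_r := Z_r N_r^{-1/2}$ has orthonormal columns, and (ii) the factorization $P = (\Zb_1 \Usi)\Sigma (\Zb_2 \Vsi)^T$ holds and has the required orthogonality structure.

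For (i), the key observation is that a membership matrix $Z_r \in \hr(n_r, \kk_r)$ has pairwise orthogonal columns whose squared norms are the cluster sizes; that is, $Z_r^T Z_r = N_r$. This is immediate from the definition: the $(j,j')$ entry of $Z_r^T Z_r$ counts the number of nodes that are assigned to both cluster $j$ and cluster $j'$, which is $n_{rj}$ if $j = j'$ and $0$ otherwise. Therefore $\Zb_r^T \Zb_r = N_r^{-1/2}(Z_r^T Z_r) N_r^{-1/2} = N_r^{-1/2} N_r N_r^{-1/2} = I_{\kk_r}$, so $\Zb_r \in \ort{n_r}{\kk_r}$.

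For (ii), I would first rewrite $\Bb = N_1^{1/2} B N_2^{1/2}$ as $B = N_1^{-1/2}\, \Bb\, N_2^{-1/2}$, and then substitute into~\eqref{eq:nonsym:mean:def} to obtain
\begin{align*}
P = Z_1 B Z_2^T = Z_1 N_1^{-1/2}\, \Bb\, N_2^{-1/2} Z_2^T = \Zb_1\, \Bb\, \Zb_2^T = \Zb_1 \Usi \Sigma \Vsi^T \Zb_2^T,
\end{align*}
using the reduced SVD $\Bb = \Usi \Sigma \Vsi^T$ in the last step. To conclude that this is the reduced SVD of $P$, it remains to verify that $\Zb_1 \Usi \in \ort{n_1}{\kk}$ and $\Zb_2 \Vsi \in \ort{n_2}{\kk}$; but this is exactly the composition property noted right after the definition of $\ort{n}{k}$ in Section~\ref{sec:notation}: if $\Zb_r \in \ort{n_r}{\kk_r}$ and $\Usi \in \ort{\kk_1}{\kk}$ (resp.\ $\Vsi \in \ort{\kk_2}{\kk}$), then the product lies in $\ort{n_r}{\kk}$. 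The singular values in $\Sigma$ are nonnegative and in decreasing order because they came from the reduced SVD of $\Bb$.

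There is no real obstacle here: the proof is a three-line calculation whose only content is the identity $Z_r^T Z_r = N_r$. The one thing worth being mindful of is that we are asserting equality of SVDs, not just equality of matrices, which is why I would explicitly point out that the two outer factors lie in the appropriate $\ort{n_r}{\kk}$ spaces; otherwise the claim would be ambiguous when $\kk < \min(\kk_1,\kk_2)$ would cause trouble, but by definition $\kk = \min\{\kk_1,\kk_2\}$, so $\Bb$ has full (reduced) SVD rank at most $\kk$ and the dimensions match up cleanly.
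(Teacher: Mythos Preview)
Your proposal is correct and follows essentially the same route as the paper: both establish $Z_r^T Z_r = N_r$ (you by computing entries, the paper by writing the sum of rank-one outer products $\sum_i z_{ri} z_{ri}^T$), deduce $\Zb_r \in \ort{n_r}{\kk_r}$, rewrite $P = \Zb_1 \Bb \Zb_2^T$, and then invoke the composition property $\Zb_r \cdot \ort{\kk_r}{\kk} \subset \ort{n_r}{\kk}$ to conclude. There is no substantive difference.
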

\begin{proof}
	We first show that $Z_r^T Z_r = N_r$ which then implies that $\Zb_r$ is orthogonal. Let $z_{ri}^T$ be the $i$th row of $Z_r$ and note that $Z_r^T Z_r = \sum_{i=1}^n z_{ri}^{} z_{ri}^T$. Since $z_{ri} \in \hr(1,\kk_r)$, each $z_{ri}^{} z_{ri}^T$ is a diagonal matrix with a single $1$ on the diagonal at the position determined by the cluster assignment of node $i$ on side $r$.
	Now, a little algebra on~\eqref{eq:nonsym:mean:def} shows that $P =  \Zb_1 \Bb \Zb_2^T$, hence~\eqref{eq:P:svd} holds. Since $\Zb_r \in \ort{n_r}{\kk_r}$, we have $\Zb_1 \Usi \in \ort{n_1}{k}$ and $\Zb_2 \Vsi \in \ort{n_2}{k}$ showing that~\eqref{eq:P:svd} is in fact a (reduced) SVD of~$P$.
\end{proof}

\smallskip
%Letting $\Zb_r = Z_r N_r^{-1/2}$, we observe that $\Zb_r$ is an orthogonal matrix $\Zb_r^T \Zb_r = I_{\kk_r}$. In other words, $\Zb_r \in \ort{n_r}{\kk_r}$, and we can write $P =  \Zb_1 \Bb \Zb_2^T$ for
% \begin{align*}
% 	P = Z_1 N_1^{-1/2} (N_1^{1/2} B N_2^{1/2}) N_2^{-1/2} Z_2^T = \Zb_1 \Bb \Zb_2^T
% \end{align*}
% where $\Bb = N_1^{1/2} B N_2^{1/2}$.
%\begin{align*}
% \Bb := N_1^{1/2} B N_2^{1/2} = \Nb_1^{1/2} \Psi \Nb_{2}^{1/2},
%\end{align*}
%where 
%%We have $\Bb  = \Nb_1^{1/2} \Psi \Nb_{2}^{1/2}$. 
% Let $\Bb$ have the following reduced SVD:
%\begin{align}\label{eq:Bb:svd}
%\Bb = \Usi \Sigma \Vsi^T
%\end{align}
%where $\Usi \in \ort{\kk_1}{\kk}, \Vsi \in \ort{\kk_2}{\kk}$ and $\Sigma = \diag(\sigma_1,\dots,\sigma_\kk)$, and we recall that $\kk = \min\{\kk_1,\kk_2\}$. It then follows that the mean matrix $P$ has the following reduced SVD
%\begin{align}\label{eq:P:svd}
%P = (\Zb_1 \Usi) \,\Sigma\, (\Zb_2 \Vsi)^T
%\end{align}
%where $\Zb_1 \Usi \in \ort{n_1}{k}$ and $\Zb_2 \Vsi \in \ort{n_2}{k}$. 

When dealing with the symmetric SBM, we will drop the subscript $r$ from all the relevant quantities; for example, we write $N = N_1 = N_2$, $\Zb = \Zb_1 = \Zb_2$, $\pi_{j} = \pi_{1j} = \pi_{2j}$, and so on.

\begin{rem}[Reduced versus truncated]\label{rem:reduce:trunc}
	 The term \emph{reduced SVD} in Lemma~\ref{lem:P:svd} (also known as compact SVD) means that we reduce the orthogonal matrices in a full SVD by removing the columns corresponding to zero singular values. The number of columns of the resulting matrices will be equal to the rank of the underlying matrix (i.e., both $\Zb_1 \Usi$ and $\Zb_2 \Vsi$ will have $k = \min\{k_1,k_2\}$ columns in the case of $P$). Hence, a reduced SVD is still an exact SVD. Later, we will use the term \emph{truncated SVD} to refer to an ``approximation'' of the original matrix by a lower rank matrix obtained by further removing columns corresponding to small nonzero singular values (starting from a reduced SVD).  Hence, a truncated SVD is only an approximation of the original matrix.
\end{rem}

\begin{rem}[Scaling and sparsity]\label{rem:scaling}
	%Let us comment on the normalization in~\eqref{eq:nonsym:mean:def}. 
	As can be seen from the above discussion,
	% leading to~\eqref{eq:Bb:svd} and~\eqref{eq:P:svd}, 
	the normalization in~\eqref{eq:nonsym:mean:def} is natural for studying spectral clustering. In the symmetric case, where $n_1 = n_2 = n$, the normalization reduces to $B = \Psi/n$, which is often assumed when studying sparse SBMs by requiring that either $\infnorm{\Psi}$ is $O(1)$ or grows slowly with $n$. To see why this implies a sparse network, note that the expected average degree of the symmetric SBM (under this scaling) is
	\begin{align*}
		\frac1n 1^T P 1 = \frac1n 1^T N B N 1 =  1^T \Nb \Psi  \Nb 1 = \sum_{i,j=1}^\kk \pi_{i} \pi_{j} \Psi_{ij} =: \dav
	\end{align*}
	using $1_n^T Z = 1_\kk^T N$. (Here and elsewhere, $1$ is the vector of all ones of an appropriate dimension; we write $1_n$ if we want to emphasize the dimension $n$.) Thus, the growth of the average expected degree, $\dav$, is the same as $\Psi$, and as long as % $\Nb = \diag(\pi_1,\dots,\pi_\kk)$ and 
	$\Psi$ is $O(1)$ or grows very slowly with $n$, the network is sparse. Alternatively, we can view the expected density of the network (the expected number of edges divided by the total number of possible edges) as a measure of sparsity. For the symmetric case, the expected density is $(\frac12 1^T P 1) / \binom{n}2 \sim \dav/n$ and is $O(n^{-1})$ if $\dav=O(1)$. Similar observations hold in the general bipartite case if we let $ n =\sqrt{n_1 n_2}$, the geometric mean of the dimensions. The expected density of the bipartite network under the scaling of~\eqref{eq:nonsym:mean:def} is %then
	\begin{align*}
		\frac{1^T P 1}{n^2} = \frac1{n^2} 1^T N_1 B N_2 1 = \frac1n 1^T \Nb_1 \Psi \Nb_2 1 = \frac{\dav}{n}, \quad (n = \sqrt{n_1n_2})
	\end{align*} 
	where $\dav := 1^T \Nb_1 \Psi \Nb_2 1 = \sum_{i,j} \pi_{1i}\pi_{2j} \Psi_{ij}$ can be thought of as the analog of the expected average degree in the bipartite case. As long as $\infnorm{\Psi}$ grows slowly relative to $n = \sqrt{n_1n_2}$, the bipartite network is sparse.
\end{rem}

\section{Analysis steps}\label{sec:analysis:steps}
Throughout, we focus on recovering the row clusters. Everything that we discuss goes through, with obvious modifications, for recovering the column clusters. Recalling the decomposition~\eqref{eq:P:svd}, the idea of spectral clustering in the context of SBMs is that $\Zb_1 \Usi$ has enough information for recovering the clusters and can be obtained by computing a reduced SVD of $P$. In particular, applying a \kmeans type clustering on the rows of $\Zb_1 \Usi$ should recover the cluster labels. On the other hand, the actual random adjacency matrix, $A$, is concentrated around the mean matrix $P$, after proper regularization if need be. We denote this potentially \emph{regularized version as $\Are$}. Then, by  the spectral perturbation theory, if we compute a reduced SVD of $\Are = \Zh_1 \Sigh \Zh_2^T$ where $\Zh_r \in \ort{n_r}{k}$, $r=1,2$ and $\Sigh$ is diagonal, we can conclude that $\Zh_1$ concentrates around $\Zb_1 \Usi$. Hence, applying a continuous \kmeans  algorithm on $\Zh_1$ should be able to recover the labels with a small error.

\subsection{Analysis sketch}\label{sec:analysis:sketch}
Let us sketch the argument above in more details. A typical approach in proving consistency of spectral clustering consists of the following steps:
\begin{enumerate}[label=\textbf{\arabic*.},wide]
	\item We replace $A$ with a properly regularized version $\Are$. We provide the details for one such regularization in Theorem~\ref{thm:concent:nonsym} (Section~\ref{sec:concent}). However, the only property we require of the regularized version is that it concentrates, with high probability, around the mean of $A$, at the following rate (assuming $n_2 \ge n_1$):
	\begin{align}\label{eq:gen:concent}
	%\opnorm{\Are - \ex A} \le C \sqrt{a}, 
	%\quad \text{where} \quad a\ge \sqrt{\frac{n_2}{n_1}}\norm{\Psi}_\infty.
	\opnorm{\Are - \ex [A]} \le C \sqrt{d}, 
	\quad \text{where} \quad \dg = \sqrt{\frac{n_2}{n_1}}\norm{\Psi}_\infty.
	\end{align}
	%\zz{As mentioned in one of the reviews, $a\ge \sqrt{\frac{n_2}{n_1}}\norm{\Psi}_\infty$ is confusing. Can we say replace $a$ by $d$ where $d=\sqrt{\frac{n_2}{n_1}}\norm{\Psi}_\infty$? } \aaa{Yes, good please change everywhere to $d$. There are, however, a couple of issues: (1) we use $d$ in concentration theorems as well... and also use $d$ in kmeans section for vector dimensions. Please think about how to fix. (2) Writing an inequality has a benefit if I recall correctly. See if we ever need it (I recall we needed it at some point). } \zz{We can always replace $d$ by any number larger than $d$. At least in the proof of regularization, we assume $d=n_2\max p_{ij}$.} 
	Here and throughout $\opnorm{\cdot}$ is the $\ell_2 \to \ell_2$ operator norm and $\infnorm{\Psi} = \max_{ij} \Psi_{ij}$.
	
	\item We pass from $\Are$ and $P = \ex[A]$ to their (symmetrically) dilated versions $\Are^\dagger$ and $P^\dagger$. The symmetric dilation operator will be given in~\eqref{eq:dilation:defn} (Section~\ref{sec:dilation}) and allows us to use spectral perturbation bounds for symmetric matrices. A typical final result of this step is% a bound of the form:
	\begin{align}\label{eq:Z:dev:bound}
	\fnorm{\Zh_1 - \Zb_1 \Usi Q} \le \frac{C_2}{\sigma_k} \sqrt{k \dg}, \quad \text{w.h.p.}
	\end{align}
	for some $Q \in \ort{k}{k}$. We recall that $\fnorm{\cdot}$ is the Frobenius norm. Here, $\sigma_k$ is the smallest nonzero singular value of $\Bb$ as defined in Lemma~\ref{lem:P:svd}. The form of~\eqref{eq:Z:dev:bound} will be different if instead of $\Zh_1$ one considers other objects as the end result of this step; %, which constitutes the input to the \kmeans algorithms.
	 see Section~\ref{sec:consist:res} (e.g.,~\eqref{eq:RR:dev}) for instances of such variations. The appearance of $Q$ is inevitable and is a consequence of the necessity of \emph{properly aligning} the bases of spectral subspaces, before they can be compared in Frobenius norm (cf.~Lemma~\ref{lem:DK:Z:dev}). Nevertheless, the growing stack of orthogonal matrices on the RHS of $\Zb_1$ has little effect on the performance of row-wise \kmeans, as we discuss shortly.
	
	\item The final step is to analyze the effect of applying a \kmeans algorithm to $\Zh_1$. Here, we introduce the concept of a \emph{\kmeans matrix}, one whose rows take at most $k$ distinct values. (See Section~\ref{sec:kmeans:step} for details). A \kmeans algorithm $\kalg$ takes a matrix $\Xh \in \reals^{n \times \md}$ and outputs a \kmeans matrix $\kalg(\Xh) \in \reals^{n \times \md}$. Our focus will be on \kmeans algorithms with the following property: If $\Xs \in \reals^{n \times \md}$ 
	is a \kmeans matrix, then for some constant $c > 0$,
	\begin{align}\label{eq:lqc:kmeans}
	\fnorm{\Xh - \Xs}^2 \le \eps^2 \;\implies\; \Misb(\kalg(\Xh), \Xs) \le c\, \eps^2/(n\delta^2).
	\end{align}
	where $\delta^2 = \delta^2(\Xs)$ %in~\eqref{eq:lqc:kmeans} 
	is the minimum center separation of $\Xs$ (cf. Definition~\ref{dfn:center:sep}), and 
	$\Misb$ is the average misclassification rate between two \kmeans matrices. 
	For future reference, we refer to property~\eqref{eq:lqc:kmeans} as the \emph{local quadratic continuity (LQC)} of algorithm $\kalg$; see Remark~\ref{rem:LQC} for the rationale behind the naming.
	As will become clear in Section~\ref{sec:kmeans:step}, \kmeans matrices encode both the cluster label information and cluster center information, and these two pieces can be recovered from them in a lossless fashion. Thus, it makes sense to talk about misclassification rate between \kmeans matrices, by interpreting it as a statement about their underlying label information. In Section~\ref{sec:kmeans:step}, we will discuss \kmeans algorithms that satisfy~\eqref{eq:lqc:kmeans}. 
	
%	Applying~\eqref{eq:lqc:kmeans} with $\Xh = \Zh_1$, $\Xs = \Zb_1 \Usi Q$ and $\eps^2 =C_2^2\, ka /\sigma_k^2$, and combining with~\eqref{eq:Z:dev:bound} leads to a misclassification rate for the  spectral clustering algorithm (cf.~Theorem~\ref{thm:sc:prototype}).
	
	%In other words, $X \in \reals^{n \times d}$ is a \kmeans matrix if 
\end{enumerate}

The preceding three steps of the analysis follow the three steps of a general spectral clustering algorithm, which we refer to as \emph{regularization}, \emph{spectral truncation} and \emph{\kmeans} steps, respectively.
Recalling the definition of cluster proportions, let us assume for some $\beta_r \ge 1$,
\begin{align}\label{asu:clust:prop}
	\max_{(t,s):\; t\neq s} \frac{2}{\pi_{rt}^{-1} + \pi_{rs}^{-1}}\; \le\;  \frac{\beta_r}{\kk_r}, \quad r=1,2. \tag{A1}
\end{align}
The LHS is the maximum \emph{harmonic mean} of pairs of distinct cluster proportions. For balanced clusters, we have $\pi_{rt} = 1/\kk_r$ for all $t \in [\kk_r]$ and we can take $\beta_r=1$. In general, $\beta_r$ measures the deviation of the clusters (on side $r$) from balancedness.
The following is a prototypical consistency theorem for a spectral clustering algorithm:
\begin{thm}[Prototype SC consistency]\label{thm:sc:prototype}
	Consider a spectral algorithm with a \kmeans step satisfying~\eqref{eq:lqc:kmeans}, and the ``usual'' spectral truncation step, applied to a regularized bi-adjacency matrix $\Are$ satisfying concentration bound~\eqref{eq:gen:concent}. Let $\kalg(\Zh_1)$ be the resulting estimate for membership matrix $Z_1$, and assume $\kk_1 = \kk =: \min\{\kk_1,\kk_2\}$. Then, under the SBM model of Section~\ref{sec:SBM} and assuming~\eqref{asu:clust:prop}, w.h.p.,
	\begin{align*}
		\Misb(\kalg(\Zh_1),\Zb_1) \, \lesssim \, \beta_1 \Big(\frac{\dg}{\sigma_\kk^2}\Big).
	\end{align*}
\end{thm}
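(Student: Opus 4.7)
The plan is to execute the three-step decomposition sketched in Section~\ref{sec:analysis:sketch}: import the concentration bound $\opnorm{\Are - P} \le C\sqrt{\dg}$ supplied by~\eqref{eq:gen:concent}; convert it into a Frobenius-norm bound on the aligned left singular subspaces of $\Are$ via a Davis--Kahan--Wedin argument after symmetric dilation; and finally feed the resulting deviation into the LQC property~\eqref{eq:lqc:kmeans} of the \kmeans step, applied to a carefully chosen target \kmeans matrix.

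For the spectral perturbation step, I would observe that, by Lemma~\ref{lem:P:svd}, $P = (\Zb_1\Usi)\Sigma(\Zb_2\Vsi)^T$ is a reduced SVD whose nonzero singular values coincide with those of $\Bb$. Dilation via $X \mapsto X^\dagger$ preserves operator norm and symmetrizes, and the eigenvalues of $P^\dagger$ are $\pm\sigma_1,\dots,\pm\sigma_\kk$ together with a bulk of zeros, so the gap separating the top $\kk$ positive eigenvalues of $P^\dagger$ from the rest is exactly $\sigma_\kk$. A Davis--Kahan--Wedin bound with orthogonal alignment (namely Lemma~\ref{lem:DK:Z:dev}) then produces $Q \in \ort{\kk}{\kk}$ satisfying
\begin{align*}
\fnorm{\Zh_1 - \Zb_1\Usi Q} \;\lesssim\; \frac{\sqrt{\kk}\,\opnorm{\Are - P}}{\sigma_\kk} \;\lesssim\; \frac{\sqrt{\kk\,\dg}}{\sigma_\kk}
\end{align*}
with high probability, yielding $\eps^2 \lesssim \kk\,\dg/\sigma_\kk^2$ for the LQC input.

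For the \kmeans step, I would take $\Xs := \Zb_1\Usi Q$ as the target. Because $\kk_1 = \kk$, the matrix $\Usi$ is square orthogonal, so its rows $\usi_1^T,\dots,\usi_{\kk_1}^T$ form an orthonormal basis of $\reals^{\kk_1}$. Since the $i$th row of $\Zb_1$ is $n_{1,z_i}^{-1/2}\,e_{z_i}^T$, the $\kk_1$ distinct centers of $\Xs$ are $\{n_{1,j}^{-1/2}\usi_j^T Q\}_{j=1}^{\kk_1}$, and a short calculation using the orthogonality of $Q$ and of the $\usi_j$ gives
\begin{align*}
\bigl\|n_{1,j}^{-1/2}\usi_j^T Q - n_{1,\ell}^{-1/2}\usi_\ell^T Q\bigr\|_2^2 \,=\, n_{1,j}^{-1} + n_{1,\ell}^{-1} \,=\, \frac{\pi_{1,j}^{-1}+\pi_{1,\ell}^{-1}}{n_1}.
\end{align*}
Hence the minimum center separation satisfies $\delta^2 \ge \min_{j\neq \ell}(\pi_{1,j}^{-1}+\pi_{1,\ell}^{-1})/n_1 \ge 2\kk_1/(n_1\beta_1)$ by~\eqref{asu:clust:prop}. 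Plugging $n=n_1$ together with the bounds on $\eps^2$ and $\delta^2$ into~\eqref{eq:lqc:kmeans} would give $\Misb(\kalg(\Zh_1),\Xs) \lesssim \beta_1(\kk/\kk_1)\,\dg/\sigma_\kk^2$, which is the claimed rate since $\kk = \kk_1$. The equality $\Misb(\kalg(\Zh_1),\Xs) = \Misb(\kalg(\Zh_1),\Zb_1)$ then follows because $\Misb$ on \kmeans matrices depends only on the row-coincidence pattern and is therefore invariant under right multiplication by invertible matrices.

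The main obstacle is the spectral step: one must invoke the variant of Davis--Kahan--Wedin that delivers an \emph{aligning} orthogonal $Q$ in Frobenius norm (so that the centers of $\Xs$ inherit the orthonormal geometry of the rows of $\Usi$), and correctly identify $\sigma_\kk$---not a smaller quantity---as the relevant eigengap after dilation. Once $Q$ and the eigengap are secured, the \kmeans step is essentially bookkeeping driven by the cluster-size lower bound in~\eqref{asu:clust:prop}.
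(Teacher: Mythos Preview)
Your proposal is correct and follows essentially the same three-step argument as the paper's proof: invoke Lemma~\ref{lem:DK:Z:dev} to get~\eqref{eq:Z:dev:bound}, take $\Xs = \Zb_1\Usi Q$ and compute its center separation as $\min_{t\neq s}(n_{1t}^{-1}+n_{1s}^{-1})$ (the paper does this via invariance of $\delta^2$ under right multiplication by the square orthogonal $\Usi Q$, you via direct orthonormality of the rows of $\Usi$---equivalent when $\kk_1=\kk$), then apply~\eqref{asu:clust:prop} and~\eqref{eq:lqc:kmeans}. The only cosmetic discrepancy is your stated eigengap of $\sigma_\kk$ versus the paper's $2\sigma_\kk$ in Lemma~\ref{lem:dilation}(c), but since the dilated matrix has a bulk of zero eigenvalues your value is the correct one for the Davis--Kahan denominator, and in any case this only affects constants hidden in $\lesssim$.
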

Here, and in the sequel, ``with high probability'', abbreviated w.h.p., means with probability at least $1-n^{-c_1}$ for some universal constant $c_1 > 0$. The notation $ f \lesssim g$ means $ f \le c_2\, g$ where $c_2 > 0$ is a universal constant. In addition, $f \asymp g$ means $f \lesssim g$ and $g \lesssim f$.

\begin{proof}
	By assumption, concentration bound~\eqref{eq:gen:concent} holds. By Lemma~\ref{lem:DK:Z:dev} in Section~\ref{sec:dilation}, \eqref{eq:gen:concent} implies~\eqref{eq:Z:dev:bound} for the usual truncation step.
	Let $O := \Usi Q$ and $\eps^2 =C_2^2\, k\dg /\sigma_k^2$ so that~\eqref{eq:Z:dev:bound} reads
	$
	\fnorm{\Zh_1 - \Zb_1 O} \le \eps^2.
	$
	%$\Xs =  \Zb_1 O \in \rmat(n_1,\kk)$, where $\Zb_1 \in \ort{n_1}{\kk_1}$ and 
	The \kmeans step satisfies~\eqref{eq:lqc:kmeans} by assumption. 
	Applying~\eqref{eq:lqc:kmeans} with $\Xh = \Zh_1$, $\Xs = \Zb_1 O$ and $\eps^2$ defined earlier leads to % a bound on the misclassification rate for the  spectral clustering algorithm
	\begin{align}\label{eq:mis:eps:delta:1}
		\Misb(\kalg(\Zh_1),\Zb_1) = \Misb(\kalg(\Zh_1),\Zb_1 O)
		\; \lesssim \;\frac{\eps^2}{ n_1 \delta^2}. %\;\lesssim\; \beta_1 \frac{\kk}{\kk_1}  \frac{a}{\sigma_\kk^2}
	\end{align}
	
	It remains to calculate the minimum center separation of $\Xs =  \Zb_1 O \in \rmat(n_1,\kk)$, where $\Zb_1 \in \ort{n_1}{\kk_1}$ and $O := \Usi Q \in \ort{\kk_1}{\kk}$. We have
	\begin{align*}
		\delta^2 = \delta^2(\Zb_1 O) = \delta^2(\Zb_1)
		 &= \min_{t \neq s} \norm{n_{1t}^{-1/2} e_{t} - n_{1s}^{-1/2} e_{s}}_2^2 = \min_{t \neq s} \; (n_{1t}^{-1} + n_{1s}^{-1})
	\end{align*}
	where $e_s \in \reals^{\kk_1}$ is the $s$th standard basis vector.  The second equality uses invariance of $\delta^2$ to right-multiplication by a square orthogonal matrix. This is a consequence of $\norm{u^T O - v^T O}_2 = \norm{u - v}_2$ for $u,v \in \reals^{\kk_1}$ and $O \in \ort{\kk_1}{\kk}$ when $\kk_1 = \kk$; see~\eqref{eq:ort:isom:1}. The third equality is from the definition $\Zb_1 = Z_1 N_1^{-1/2}$. Using~\eqref{asu:clust:prop},
	\begin{align*}
		(n_1 \delta^2 )^{-1} \le \max_{t\neq s} \;(\pi_{1t}^{-1} + \pi_{1s}^{-1})^{-1} \le \frac{\beta_1}{2 \kk_1}.
	\end{align*}
	%We obtain, with $\eps^2 = C_2^2 \kk a/\sigma^2_{\kk}$,
	It follows that
	%\begin{align*}
		$%\Misb(\kalg(\Zh_1),\Zb_1) %= \Misb(\kalg(\Zh_1),\Zb_1 O)
		%\lesssim 
		\frac{\eps^2}{ n_1 \delta^2}\lesssim \beta_1 \frac{\kk}{\kk_1} \frac{\dg}{\sigma_\kk^2}$
	%\end{align*}
	which gives the result in light of~\eqref{eq:mis:eps:delta:1} and assumption $\kk_1 = \kk$.
\end{proof}
The case $\kk_1 > \kk$ will be discussed in Section~\ref{sec:more:clust:than:rank}. For~\eqref{eq:lqc:kmeans} to hold for a \kmeans algorithm, one usually requires some additional constraints on $\eps^2/(n\delta^2)$, ensuring  that this quantity is small. We will restate Theorem~\ref{thm:sc:prototype} with such conditions explicitly once we consider the details of some \kmeans algorithms. For now, Theorem~\ref{thm:sc:prototype} should be thought of as a general blueprint, with specific variations obtained in Section~\ref{sec:consist:res} for various spectral clustering algorithms.

\begin{rem}\label{rem:why:it:is:consistency}
	To see that Theorem~\ref{thm:sc:prototype}  is a consistency result, consider the typical case where $\beta_1 \asymp 1$,  and $\sigma_\kk \asymp \dg$, so that $	\Misb(\kalg(\Zh_1),\Zb_1) = O(\dg^{-1})$. Then, as long as $\dg \to \infty$, i.e., the average degree of the network grows with $n$, assuming $n_1 \asymp n_2 \asymp n$ (for some $n$), we have $\Misb(\kalg(\Zh_1),\Zb_1) = o(1)$, i.e., the average misclassification rate vanishes with high probability. One might ask why $\sigma_{k} \asymp \dg$ is reasonable. Consider the typical case where $\Psi = \rho_n \Psi'$ for some constant matrix $\Psi'$  and a scalar parameter $\rho_n$ that captures the dependence on $n$. This setup is common in network modeling~\cite{bickel2009nonparametric}. Then, $d \asymp \infnorm{\Psi} = \rho_n \infnorm{\Psi'}$ and $\sigma_k = \rho_n \sigma'_k$ where $\sigma'_k$ is defined based on $\Psi'$ and hence constant. It follows that $d \asymp \rho_n \asymp \sigma_k$ and $\Misb(\kalg(\Zh_1),\Zb_1) = o(\rho_n^{-1})$. Note that, in general, $\rho_n$ can grow as fast as $n$ (cf.~\eqref{eq:nonsym:mean:def}).
	 More specific examples are given in Section~\ref{sec:consist:res}. %See Examples~? and~? in Section~? for more details.
\end{rem}

\begin{rem}\label{rem:cond:relax}
	Condition~\eqref{asu:clust:prop} is more relaxed that what is commonly assumed in the literature (though the proof is the same). Stating the condition as a harmonic mean allows one to have similar results as the balanced case when one cluster is large, while others remain more or less balanced. For example, let $\pi_{r1} = 1-c$ for some constant $c \in (0,1)$, say $c = 0.4$, and let $\pi_{rt} = c/(\kk_r-1)$ for $t\neq 1$. Then, we have for $s\neq t$
	\begin{align*}
		 \frac{2}{\pi_{rt}^{-1} + \pi_{rs}^{-1}} \le 2\min\{\pi_{rt},\pi_{rs}\} = \frac{2c}{\kk_r-1} \le \frac{4}{\kk_r}
	\end{align*}
	assuming $k_r \ge 2$. Hence~\eqref{asu:clust:prop} holds with $\beta_r = 4$. Note that as $k_r$ is increased, all but one cluster get smaller.
%	
%	\aaa{Talk about the advantage of harmonic mean: one cluster can be very large, but as long as the rest are almost equal in size~\eqref{asu:clust:prop} is satisfied.}
\end{rem}

\begin{rem}[LQC naming]\label{rem:LQC}
	The rationale behind the naming of~\eqref{eq:lqc:kmeans} is as follows: Let $\Delta(X,Y) := \frac1{\sqrt{n}} \fnorm{X - Y}$ be the metric induced by the normalized Frobenius norm on the space of $n \times m$ matrices. Assume that $X^*$ is a \kmeans matrix and that \kmeans matrices are fixed points of algorithm $\kalg$, hence $X^*= \kalg(X^*)$. Then, by taking the infimum over $\eps^2$, \eqref{eq:lqc:kmeans} can be written as
	\begin{align}\label{eq:kmeans:modulus}
		\Misb\big(\kalg(\Xh), \kalg(X^*)\big) \le \omega \big(\Delta(\Xh,\Xs)\big)
	\end{align}
	where $\omega(t) = c t^2 / \delta^2(X^*)$, showing that $\kalg$ is continuous w.r.t.  the two (pseudo)-metrics, locally at $X^*$, with a quadratic modulus of continuity. Note that this continuity is only required to hold around a \kmeans matrix $X^*$ (and not in general). Another reason for the ``locality'' is that such statements often only hold for sufficiently small $\Delta(\Xh,\Xs)$.
\end{rem}
In this rest of this section, we fill in some details of the last two steps of the plan sketched above, deferring Step~1 to Section~\ref{sec:concent}. %, starting with Step~2.

\subsection{SV truncation (Step~2)}\label{sec:dilation}
We now show how the concentration bound~\eqref{eq:gen:concent} implies the deviation bound~\eqref{eq:Z:dev:bound} for the usual spectral truncation step.
Let us define the \emph{symmetric dilation} operator $: \reals^{n_1 \times n_2} \to \sym^{n_1 + n_2}$ by
\begin{align}\label{eq:dilation:defn}
\Pdag := %\antidiag(P) :=
\begin{pmatrix}
0 & P \\
P^T & 0
\end{pmatrix},
\end{align}
where $\sym^{n}$ is the set of symmetric $n\times n$ matrices. This operator will be very useful in translating the results between the symmetric and non-symmetric cases. Let us collect some of its properties:
\begin{lem}[Symmetric Dilation]\label{lem:dilation}
	Let $P \in \reals^{n_1 \times n_2}$ have a reduced SVD given by $P = U \Sigma V^T$ where $\Sigma = \diag(\sigma_1,\dots,\sigma_\kk)$ is a $\kk \times \kk$ nonnegative diagonal matrix . Then,
	\begin{enumerate}[label=(\alph*)]
		\item $\Pdag$ has a reduced EVD given by
		\begin{align*}
		\Pdag = \, W 
		\begin{pmatrix}
		\Sigma & 0 \\
		0 & -\Sigma
		\end{pmatrix} W^T, \quad 
		W = 
		\frac1{\sqrt{2}}
		\begin{pmatrix}
		U & U \\
		V & -V
		\end{pmatrix} \in \ort{(n_1+n_2)}{2\kk}.%, \; \orthm_{n_1+n_2, 2k}
		\end{align*}
		
		\item $P \mapsto \Pdag$ is a linear operator with
		%\begin{align*}
			$\mnorm{P^\dagger} = \mnorm{P}$ and $\mnorm{P^\dagger}_F = \sqrt{2}\mnorm{P}_F.$
		%\end{align*}
		% it preserves the operator norm: $\mnorm{P^\dagger} = \mnorm{P}$.
		%\item $\mnorm{P^\dagger}_F = \sqrt{2}\mnorm{P}_F$.
		\item The gap between $\kk$ top (signed) eigenvalues of $P^\dagger$ and the rest of its spectrum is $2\sigma_\kk$.
	\end{enumerate}
	
\end{lem}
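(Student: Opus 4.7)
My plan is straightforward verification by block-matrix computation; the content of the lemma is contained in the explicit EVD of part (a), and both (b) and (c) follow by simply reading off the spectrum.

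For part (a), I would first verify orthogonality of $W$: writing $W^T W$ as a $2\times 2$ block matrix, the diagonal blocks evaluate to $\tfrac12(U^T U + V^T V) = I_k$ and the off-diagonal blocks to $\tfrac12(U^T U - V^T V) = 0$, using $U^T U = V^T V = I_k$ from the reduced SVD. This gives $W \in \ort{(n_1+n_2)}{2k}$. Next, with $\Lambda := \diag(\Sigma, -\Sigma)$, I would expand $W \Lambda W^T$ block by block: the diagonal blocks $\tfrac12 (U\Sigma U^T - U\Sigma U^T)$ and $\tfrac12(V\Sigma V^T - V\Sigma V^T)$ both vanish, while the off-diagonal blocks combine to $U\Sigma V^T = P$ and $(U\Sigma V^T)^T = P^T$, recovering $P^\dagger$ as defined in~\eqref{eq:dilation:defn}. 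This confirms that the displayed expression is a valid reduced EVD.

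Part (b) then follows immediately. Linearity of $P \mapsto P^\dagger$ is obvious from the definition. Since $P^\dagger$ is symmetric with nonzero eigenvalues $\{\sigma_i\}_{i=1}^k \cup \{-\sigma_i\}_{i=1}^k$ (plus $n_1 + n_2 - 2k$ zeros from the orthogonal complement of $\img(W)$), its singular values are the $\sigma_i$ each with multiplicity $2$. Hence the operator norm equals $\sigma_1 = \opnorm{P}$, giving $\mnorm{P^\dagger} = \mnorm{P}$, and $\fnorm{P^\dagger}^2 = 2\sum_i \sigma_i^2 = 2\fnorm{P}^2$.

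Part (c) is a direct reading of the sorted spectrum: the top $k$ (signed) eigenvalues of $P^\dagger$ are $\sigma_1 \ge \cdots \ge \sigma_k$, while the mirror block $-\sigma_1 \le \cdots \le -\sigma_k$ accounts for the other half of the nonzero spectrum. In the symmetric-dilation use of Davis--Kahan/Wedin relevant to~\eqref{eq:Z:dev:bound}, the eigenvectors are naturally paired by the $\pm$ symmetry, and the effective separation between the top $k$ positive block and its mirror is $\sigma_k - (-\sigma_k) = 2\sigma_k$, which is the gap asserted. No step here is a real obstacle; the only point worth being careful about is stating the sorted spectrum precisely and noting that the $2\sigma_k$ gap is the one used downstream in Lemma~\ref{lem:DK:Z:dev}, where the dilation trick pairs $+\sigma_i$ and $-\sigma_i$ eigenvectors so that $2\sigma_k$ (rather than $\sigma_k$) appears in the denominator of the perturbation bound.
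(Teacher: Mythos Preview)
Your proposal is correct and follows the same direct-verification approach as the paper. The paper's own proof is extremely terse---it simply says part~(a) can be verified directly (citing $W^T W = I_{2k}$ from $U^T U = V^T V = I_k$), that part~(b) follows from~(a) by unitary invariance of the norms, and that part~(c) follows since $\sigma_j \ge 0$---so your explicit block computations are just a fleshed-out version of the same argument.
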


\begin{proof}
	Part~(a) can be verified directly (e.g. $W^TW = I_{2k}$ follows from $U^T U = V^T V = I_k$) and part~(c) follows by noting that $\sigma_j \ge 0$ for all $j$. Part~(b) also follows directly from part~(a), using unitary-invariance of the two norms.
\end{proof}

%Recall the reduced SVD of $P$ in~\eqref{eq:P:svd}.
In addition, let us define a singular value (SV) truncation operator $\trunc_k : \reals^{n_1 \times n_2} \to \reals^{n_1 \times n_2}$ that takes a matrix $A$ with SVD $A = \sum_{i} \sigma_i u_i v_i^T$ to the matrix
\begin{align}\label{eq:k:reduced:SVD}
A^{(k)} := \trunc_k(A) := \sum_{i=1}^k \sigma_i u_i v_i^T.
\end{align}
In other words, $\trunc_k$ keeps the largest $k$ singular values (and the corresponding singular vectors) and zeros out the rest. Recall that we order singular values in nonincreasing fashion $\sigma_1\ge \sigma_2 \ge \cdots.$ We also refer to~\eqref{eq:k:reduced:SVD} as the \emph{$k$-truncated SVD} of $A$.
Using the dilation and the Davis--Kahan (DK) theorem for symmetric matrices~\cite[Theorem VII.3.1]{bhatia2013matrix}, we have:
\begin{lem}\label{lem:DK:Z:dev}
	Let $\Arek = \Zh_1 \Sigh \Zh_2^T$ be the $k$-truncated SVD of $\Are$ and assume that the concentration bound~\eqref{eq:gen:concent} holds. Let $\Zb_1 \Usi$ be given by the reduced SVD of $P$ in~\eqref{eq:P:svd}. Then, the deviation bound~\eqref{eq:Z:dev:bound} holds for some $k \times k$ orthogonal matrix $Q$, and $C_2 = 2C$.  
\end{lem}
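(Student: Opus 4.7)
The plan is to reduce this asymmetric SVD deviation problem to a symmetric eigenspace perturbation problem via the dilation operator, and then invoke the Davis--Kahan sine-theta theorem for the dilated matrices. This is the standard route for bipartite spectral perturbation.

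First I would dilate both $\Are$ and $P$. By Lemma~\ref{lem:dilation}(b), dilation is linear and operator-norm preserving, so
\begin{align*}
\opnorm{\Are^\dagger - P^\dagger} \;=\; \opnorm{\Are - P} \;\le\; C\sqrt{\dg}.
\end{align*}
Applying Lemma~\ref{lem:dilation}(a) to $P$ and to the $k$-truncated $\Arek$, the top-$k$ eigenvector matrices of $P^\dagger$ and $\Are^\dagger$ are
\begin{align*}
W_+ \,:=\, \frac{1}{\sqrt{2}}\begin{pmatrix} \Zb_1 \Usi \\ \Zb_2 \Vsi \end{pmatrix}, \qquad
\Wh_+ \,:=\, \frac{1}{\sqrt{2}}\begin{pmatrix} \Zh_1 \\ \Zh_2 \end{pmatrix},
\end{align*}
both in $\ort{n_1+n_2}{k}$, with corresponding eigenvalues $\sigma_1,\ldots,\sigma_k$ and the top $k$ singular values of $\Are$, respectively.

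Next I would apply the Davis--Kahan sin-theta theorem to the pair $(P^\dagger, \Are^\dagger)$. The full spectrum of $P^\dagger$ is $\pm\sigma_1,\ldots,\pm\sigma_k$ together with $n_1+n_2-2k$ zeros, so the eigengap separating the top $k$ eigenvalues from the rest is $\sigma_k$ in general (and $2\sigma_k$ only in the degenerate case $n_1+n_2=2k$). Using the standard slack $\fnorm{\cdot}\le\sqrt{k}\opnorm{\cdot}$ for a rank-$k$ matrix, DK gives
\begin{align*}
\fnorm{\sin\Theta(\Wh_+, W_+)} \;\le\; \frac{\sqrt{k}\,\opnorm{\Are^\dagger - P^\dagger}}{\sigma_k} \;\le\; \frac{C\sqrt{k\dg}}{\sigma_k}.
\end{align*}
A standard polar-decomposition argument on $W_+^T \Wh_+$ then yields $Q \in \ort{k}{k}$ with $\fnorm{\Wh_+ - W_+ Q} \le \sqrt{2}\,\fnorm{\sin\Theta(\Wh_+, W_+)}$.

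Finally I would unpack the block structure. Since $\Wh_+ - W_+ Q$ has blocks $\tfrac{1}{\sqrt{2}}(\Zh_1 - \Zb_1\Usi Q)$ and $\tfrac{1}{\sqrt{2}}(\Zh_2 - \Zb_2\Vsi Q)$,
\begin{align*}
\fnorm{\Zh_1 - \Zb_1 \Usi Q}^2 \;\le\; 2\fnorm{\Wh_+ - W_+ Q}^2 \;\le\; \frac{4 C^2 k \dg}{\sigma_k^2},
\end{align*}
which gives the claim with $C_2 = 2C$. The main technical point to be careful about is the identification of the DK eigengap as $\sigma_k$ (between $\sigma_k$ and the zero block of the spectrum), rather than the $2\sigma_k$ gap between the $k$ positive and $k$ negative nonzero eigenvalues mentioned in Lemma~\ref{lem:dilation}(c); the presence of the large zero-eigenspace of $P^\dagger$ whenever $k < \tfrac12(n_1+n_2)$ is what governs the gap.
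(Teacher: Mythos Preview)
Your proof is essentially the same as the paper's: dilate, apply Davis--Kahan to the symmetric dilations, convert the subspace bound into a Frobenius bound on aligned bases, and read off the top block. The paper phrases the third step via the projection difference and Lemma~\ref{lem:proj:align}, while you use the equivalent $\sin\Theta$/polar-decomposition route; the arithmetic lands on the same $C_2=2C$.

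Your remark about the eigengap is in fact a small correction to the paper. Lemma~\ref{lem:dilation}(c) asserts a gap of $2\sigma_k$, but that ignores the $n_1+n_2-2k$ zero eigenvalues of $P^\dagger$; the gap separating $\{\sigma_1,\dots,\sigma_k\}$ from the rest of the spectrum is indeed $\sigma_k$ whenever $k<\tfrac12(n_1+n_2)$. The paper's proof applies DK with the claimed gap $2\sigma_k$ and a numerator constant $2$, which happens to produce the same $1/\sigma_k$ factor you obtain with gap $\sigma_k$ and constant $1$, so the end result is unaffected.
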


Later, in Section~\ref{sec:consist:res}, we introduce an alternative spectral truncation scheme. It is worth comparing the above lemma to Lemma~\ref{lem:RR:dev} which establishes a similar result for the alternative truncation.

\begin{rem}[Symmetric case]\label{rem:sym:case}
	 When $P$ is symmetric one can still use the dilation operator. In this case, since $P$ itself is symmetric, it has an eigenvalue decomposition (EVD), say $P = U \Lambda U^T$, where $\Lambda = \diag(\lambda_1,\dots,\lambda_\kk)$ is the diagonal matrix of the eigenvalues of $P$. Since these eigenvalues could be negative, there is a slight modification needed to go from the EVD to the SVD of $P$. Let $s_i$ be the sign of $\lambda_i$ and set $S = \diag(s_i, i=1,\dots,\kk)$. Then, it is not hard to see that with $V = US$ and $\Sigma = \Lambda S = \diag(|\lambda_i|,i=1,\dots,\kk)$, we obtain the SVD $P = U \Sigma V^T$. In other words, all the discussion in this section, and in particular Lemma~\ref{lem:DK:Z:dev} hold with $V = US$ and $\sigma_i = |\lambda_i|$. The special case of Lemmas~\ref{lem:dilation} and~\ref{lem:DK:Z:dev} for the symmetric case appears in~\cite{lei2015consistency}. These observations combined with the fact that the concentration inequality discussed in Section~\ref{sec:concent} holds in the symmetric case leads to the following conclusion: All the results discussed in this paper apply to the symmetric SBM, for the version of the adjacency-based spectral clustering that \emph{sorts the eigenvalues based on their absolute values.} This is the most common version of spectral algorithms in use. On the other hand, one gets a different behavior for the algorithm that considers the top $k$ (signed) eigenvalues. We have borrowed the term ``symmetric dilation'' from~\cite{tropp2015introduction} where these ideas have been successfully used in translating matrix concentration inequalities to the symmetric case.
	 
%	 \aaa{When $P$ is symmetric, we can write ... thus the approach is also useful for the symmetric case, in fact would lead to the spectral clustering based on largest \emph{absolute} value of the eigenvalues in the symmetric case. it has been used in~? and~?.}	
\end{rem}

%\pagebreak

\subsection{\kmeans algorithms (Step 3)}\label{sec:kmeans:step}
Let us now give the details of the third and final step of the analysis. We introduce some notations and concepts that help in the discussion of \kmeans (type) algorithms.

\paragraph{\kmeans matrices.}
Recall that $\hr(n,\kk)$ denotes the set of \emph{hard} (cluster) labels: $\{0,1\}$-valued $n \times \kk$ matrices where each row has exactly a single 1. Take $Z \in \hr(n,\kk)$. A related notion is that of a \emph{cluster} matrix $Y = ZZ^T \in \{0,1\}^n$ where each entry denotes whether the corresponding pair are in the same cluster. Relative to $Z$, $Y$ loses the information about the ordering of the cluster labels.
We define the class of \emph{\kmeans matrices} as follows:
\begin{align}
\begin{split}\label{eq:kmm:def}
\kmm(n,\dd,\kk) &:= \{X \in \rmat(n,\dd):\; \text{$X$ has at most $\kk$ distinct rows} \} \\
&= \{Z R:\; Z \in \hr(n,\kk), R \in \rmat(\kk,\dd)\}.
\end{split}
\end{align}
The rows of $R$, which we denote as $r_i^T$, play the role of cluster centers. Let us also denote the rows of $X$ as $x_i^T$. The second equality in~\eqref{eq:kmm:def} is due to the following correspondence: Any matrix $X \in \kmm(n,\dd,\kk)$ uniquely identifies a \emph{cluster} matrix $Y \in \{0,1\}^{n \times n}$ via, $Y_{ij} = 1$ iff $x_i = x_j$. This in turn ``uniquely'' identifies a label matrix $Z$ up to $\kk!$ permutation of the labels. From $Z$, we ``uniquely'' recover $R$, with the convention of setting rows of $R$ for which there is no label equal to zero. (This could happen if $X$ has fewer than $k$ distinct rows.) 

With these conventions, there is a one-to-one correspondence between $X \in \kmm(n,\dd,\kk)$ and $(Z,R) \in \hr(n,\kk) \times \rmat(\kk,\dd)$, up to label permutations. That is, $(Z,R)$ and $(ZQ,QR)$ are considered equivalent for any permutation matrix $Q$. The correspondence allows us to talk about a (relative) misclassification rate between two \kmeans matrices: If $X_1, X_2 \in \kmmt$ with membership matrices $Z_1,Z_2 \in \hr(n,\kk)$, respectively, we set
\begin{align}
\Misb(X_1,X_2) := \Misb(Z_1,Z_2).
\end{align}

\paragraph{\kmeans as projection.} Now consider a general $\Xh \in \rmat(n,\dd)$. The classical \kmeans problem can be thought of as projecting $\Xh$ onto $\kmm(n,\dd,\kk)$, in the sense of finding a nearest member of $\kmm(n,\dd,\kk)$ to $\Xh$ in Frobenius norm. Let us write $\df(\cdot,\cdot)$ for the distance induced by the Frobenius norm, i.e., $\df(\Xh,X) = \fnorm{\Xh - X}$. The \kmeans problem is that of solving the following optimization:
\begin{align}\label{eq:kmeans:prob}
\df(\Xh,\kmm(n,\dd,\kk)) := \min_{X \,\in\, \kmm(n,\dd,\kk)} \; \df(\Xh,X).
\end{align}
The arguments to follow go through for any distance on matrices that has a $\ell_2$ decomposition over the rows:
\begin{align}\label{eq:dist:ell2:decomp}
\df(\Xh,X)^2 = \sum_{i=1}^n \dr(\xh_i,x_i)^2,
\end{align}
where $x_i^T$ and $\xh_i^T$ are the rows of $X$ and $\Xh$ respectively, and $\dr(\xh_i,x_i)$ is some distance over vectors in $\reals^d$. For the case of the Frobenius norm: $\dr(\xh_i,x_i) = \norm{\xh-x_i}_2$, the usual $\ell_2$ distance. This is the primary case we are interested in, though the result should be understood for the general case of~\eqref{eq:dist:ell2:decomp}.
Since solving the \kmeans problem~\eqref{eq:kmeans:prob} is NP-hard, one can look for approximate solutions: 
\begin{defn}
	A $\kappa$-approximate  \kmeans solution for $\Xh$ is a matrix $\Xt \in \kmm(n,\dd,\kk)$ that achieves $\kappa$ times the optimal distance:
	\begin{align}\label{eq:kappa:approx:1}
	\df(\Xh,\Xt) \le \kappa \,\df(\Xh, \kmmt).
	\end{align}
	We write $\Pc_\kappa: \rmat(n,\dd)\mapsto \kmmt$ for the (set-valued) function that maps matrices $\Xh$ to $\kappa$-approximate solutions $\Xt$.
	
\end{defn}
An equivalent restatement of~\eqref{eq:kappa:approx:1} is
\begin{align}\label{eq:kappa:approx:2}
\df(\Xh,\Xt) \le \kappa \,\df(\Xh, X), \quad \forall X \in \kmmt.
\end{align}
Note that $\Pc_\kappa(\Xh) =  \{\Xt\in \kmm(n,\dd,\kk):\; \text{$\Xt$ satisfies~\eqref{eq:kappa:approx:2}} \}$.
Our goal is to show that whenever $\Xh$ is close to some $\Xs \in \kmmt$, then any  $\kappa$-approximate \kmeans solution based on it, namely $\Xt \in \Pc_\kappa(\Xh)$ will be close to $\Xs$ as well. This is done in two steps:
\begin{enumerate}
	\item If the distance $\df(X,\Xt)$ between two \kmeans matrices $X, \Xt \in \kmmt$ is small, then their relative misclassification rate $\Misb(X,\Xt)$ is so.
	
	\item If a general matrix $\Xh \in \rmat(n,d)$ is close to a \kmeans matrix $X \in \kmmt$, then so is its $\kappa$-approximate \kmeans projection. More specifically,
	\begin{align}\label{eq:kmeans:proj:closeness}
		\df(\Xt,X) \le (1+\kappa)\, \df(\Xh,X), \quad \forall \Xt \in \Pc_\kappa(\Xh),
	\end{align}
	which follows from the triangle inequality $\df(\Xt,X) \le \df(\Xt,\Xh) + \df(\Xh,X)$ and~\eqref{eq:kappa:approx:2}.
%	 We will write~\eqref{eq:kmeans:proj:closeness} compactly as
%	\begin{align}\label{eq:kmeans:proj:closeness:2}
%		\df(\Pc_\kappa(\Xh),X) \le (1+\kappa)\, \df(\Xh,X)
%	\end{align}
%	interpreting $\df(\Pc_\kappa(\Xh),X)$ as $\max_{\Xt \in \Pc_\kappa(\Xh)} \df(\Xt,X)$.
\end{enumerate}
Combining the two steps (taking $X = \Xs$), we will have the result.

 Let us now give the details of the first step above. For this result, we need the key notion of center separation. \kmeans matrices have more information that just a membership assignment. They also contain an encoding of the relative positions of the clusters, and hence the minimal pairwise distance between them, which is key in establishing a misclassification rate.
\begin{defn}[Center separation]\label{dfn:center:sep}
	For any $X \in \kmmt$, let us denote its centers, i.e. distinct rows, as $\{q_r(X), r \in [\kk]\}$, and let	\begin{align}\label{eq:center:sep:def}
	\delta_r(X) = \min_{\ell:\; \ell \neq r} \,\dr(q_\ell(X), q_r(X)), \quad \delinf(X) = \min_r \delta_r(X).
	\end{align}
	In addition, let $n_r(X)$ be the number of nodes in cluster $r$ according to $X$, and $\ninf(X)=\min_{r} n_r(X)$, the minimum cluster size.
\end{defn}
If $X$ has $m < \kk$, the convention would be to let $q_k(X) = 0$ for $k = m+1,\dots,\kk$. We usually do not work with these degenerate cases. Implicit in the above definition is an enumeration of the clusters of $X$. We note that definition of $\delta_r = \delta_r(X)$ in~\eqref{eq:center:sep:def} implies
\begin{align}\label{eq:center:sep:alt}
	\dr(q_\ell(X) ,q_r(X)) \ge \max\{\delta_\ell,\delta_r\}, \quad \forall (r,\ell):\, r \neq \ell.
\end{align}

We are now ready to show that that any algorithm that computes a $\kappa$-approximate solution to the \kmeans problem~\eqref{eq:kmeans:prob} (where $\kappa$ is some constant) satisfies the LQC property~\eqref{eq:lqc:kmeans} needed in the last step of the analysis sketched in Section~\ref{sec:analysis:sketch}.
Recall that $\Mis_r(X;\Xt)$ is the misclassification rate over the $r$th cluster of $X$ (Section~\ref{sec:notation}). %(This is not symmetric in its two arguments.)

\begin{prop}\label{prop:kmeans:misclass}
	Let $X, \Xt \in \kmmt$ be two \kmeans matrices, and write $n_r = n_r(X)$, $\ninf = \ninf(X)$ and $\delta_r = \delta_r(X)$. Assume that $\df(X,\Xt) \le \eps$ and 
	\begin{itemize}
		\item[(a)] $X$ has exactly $\kk$ nonempty clusters, and
		\item[(b)] $c_r^{-2}\,\eps^2 /(\delta_r^2 n_r) < 1$ for $r \in [\kk]$, and constants $c_r > 0$ such that $c_r + c_\ell \le 1, \; r \neq \ell$.
	\end{itemize}
	Then, $\Xt$ has exactly $\kk$ clusters and
	\begin{align}
		\Mis_r(X;\Xt) \le \frac{c_r^{-2} \,\eps^2}{n_r \,\delta_r^2}, \quad \forall r \in [\kk].%\min \Big\{\frac{4 \,\df(X,\Xt)^2}{ n_r(X) \, \delta_r^2(X)}, 1 \Big\}
	\end{align}
%	If in addition $X$ has exactly $\kk$ clusters, and the right-hand side above is $ < 1$ for all $r \in [\kk]$, then $\Xt$ has exactly $\kk$ clusters and
%	\begin{align}
%		\df(X,\Xt) \le \eps^2 \le \delta_r^2(X) n_r(X) /4
%	\end{align}
%	for some $r \in [k]$. Then,
%	\begin{align}
%		\Mis_r(X,\Xt) \le \frac{\eps^2}{4 \delta_r^2(X)}
%	\end{align}
\end{prop}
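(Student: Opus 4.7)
The plan is to construct a natural bijection between the clusters of $X$ and $\Xt$ using the center-separation condition, and then use the constraint $\df(X,\Xt) \le \epsil$ to certify that few rows of $\Xt$ are assigned to the ``wrong'' cluster under this bijection.

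First I would split $S_r := \{i : x_i = q_r(X)\}$ into \emph{good} indices $G_r := \{i \in S_r : d(x_i, \tilde x_i) < c_r \delta_r\}$ and \emph{bad} indices $B_r := S_r \setminus G_r$. Using the row-wise decomposition~\eqref{eq:dist:ell2:decomp}, a one-line Markov bound gives $|B_r|(c_r\delta_r)^2 \le \sum_{i \in S_r} d(x_i,\tilde x_i)^2 \le \epsil^2$, so $|B_r| \le \epsil^2/(c_r\delta_r)^2$. Assumption~(b) makes this strictly less than $n_r$, hence $G_r$ is non-empty; writing $\tilde x_i = q_{\ell(i)}(\Xt) =: \tilde q_{\ell(i)}$ for $i \in G_r$, we have $d(q_r, \tilde q_{\ell(i)}) < c_r\delta_r$.

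Next I would define the nearest-centre map $\pi(r) := \arg\min_\ell d(q_r(X), q_\ell(\Xt))$; non-emptiness of $G_r$ immediately gives $d(q_r, \tilde q_{\pi(r)}) < c_r\delta_r$ for every $r$. If $\pi(r) = \pi(s) =: \ell$ with $r \neq s$, then the triangle inequality, \eqref{eq:center:sep:alt}, and $c_r + c_s \le 1$ combine (WLOG $\delta_r \ge \delta_s$) as
\[
\delta_r \le d(q_r, q_s) \le d(q_r, \tilde q_\ell) + d(\tilde q_\ell, q_s) < (c_r + c_s)\delta_r \le \delta_r,
\]
a contradiction. Hence $\pi$ is injective; combined with $X$ having exactly $\kk$ non-empty clusters by~(a) and $\Xt$ having at most $\kk$ distinct rows by definition of $\kmm(n,\dd,\kk)$, this forces $\Xt$ to have exactly $\kk$ clusters and $\pi$ to be a permutation of $[\kk]$, establishing the first claim of the proposition.

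Finally, let $Q$ be the $\kk\times\kk$ permutation matrix with $Q e_{\pi(r)} = e_r$ for every $r$. Suppose $i \in S_r$ satisfies $\tilde x_i = \tilde q_\ell$ with $\ell \neq \pi(r)$; by bijectivity $\ell = \pi(s)$ for some $s \neq r$. Were $d(q_r,\tilde q_\ell) < c_r\delta_r$ to hold, combining it with $d(\tilde q_\ell, q_s) = d(\tilde q_{\pi(s)},q_s) < c_s\delta_s$ would give $d(q_r,q_s) < (c_r+c_s)\max(\delta_r,\delta_s) \le \max(\delta_r,\delta_s)$, again contradicting~\eqref{eq:center:sep:alt}. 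Hence $d(x_i,\tilde x_i) = d(q_r,\tilde q_\ell) \ge c_r\delta_r$, i.e., $i \in B_r$. Therefore $\{i \in S_r : z_i \neq Q \tilde z_i\} \subseteq B_r$, yielding $\Mis_r(X;\Xt) \le |B_r|/n_r \le c_r^{-2}\epsil^2/(n_r\delta_r^2)$. The crux of the argument is that assumption~(b), by guaranteeing each $G_s$ is non-empty, lets the same triangle inequality be applied twice: once to build the bijection and once to certify that a misaligned row truly contributes $\gtrsim (c_r\delta_r)^2$ to $\df(X,\Xt)^2$.
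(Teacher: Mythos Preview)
Your proposal is correct and follows essentially the same approach as the paper's proof. Your sets $S_r$, $G_r$, $B_r$ correspond to the paper's $\Cc_r$, $T_r$, $S_r$ respectively, and the Markov-type bound together with the triangle-inequality/center-separation contradiction is identical. The only cosmetic difference is in the endgame: the paper uses a pigeonhole step (once different $T_r$'s are shown to carry different $\Xt$-labels and $\Xt$ has at most $\kk$ labels, all elements of a single $T_r$ must share the same $\Xt$-label), whereas you explicitly define the nearest-center map $\pi$, show it is a bijection, and then argue directly that any row of $S_r$ landing in $\tilde q_{\pi(s)}$ with $s\neq r$ must lie in $B_r$. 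Both routes yield the same bound with the same constants.
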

In particular, under the conditions of Proposition~\ref{prop:kmeans:misclass} with $c_r = 1/2$, we have
\begin{align*}
\Misinf(X,\Xt) \le \frac{4 \,\eps^2}{\min_r n_r \delta_r^2} \le \frac{4 \,\eps^2}{\ninf \delinf^2}, \qquad \Misb(X,\Xt) \le \frac{4\, \eps^2}{ n \,\delinf^2}.
\end{align*}
where the second one follows from the identity $ \Misb(X,\Xt) = \sum_{r=1}^{\kk}(n_r/n) \Mis_r(X,\Xt)$.
Combining Proposition~\ref{prop:kmeans:misclass} with~\eqref{eq:kmeans:proj:closeness}, we obtain the following corollary:
\begin{cor}\label{cor:kmeans:misclass}
		Let $\Xs \in \kmmt$ be a \kmeans matrix, and write $n_r = n_r(\Xs)$, $\ninf = \ninf(\Xs)$ and $\delta_r = \delta_r(\Xs)$. Assume that $\Xh \in \rmat(n,\dd)$ is such that $\df(\Xs, \Xh) \le \eps$ and 
	\begin{itemize}
		\item[(a)] $\Xs$ has exactly $\kk$ nonempty clusters, and
		\item[(b)] $c_r^{-2}\,(1+\kappa)^2\eps^2 /(\delta_r^2 n_r) < 1$ for $r \in [\kk]$, and constants $c_r > 0$ such that $c_r + c_\ell \le 1, \; r \neq \ell$.
	\end{itemize}
	Then, any $\Xt \in \Pc_\kappa(\Xh)$ has exactly $\kk$ clusters and
	\begin{align}
	\Mis_r(\Xs;\Pc_\kappa(\Xh)) \le \frac{c_r^{-2} \,(1+\kappa)^2\eps^2}{n_r \,\delta_r^2}, \quad \forall r \in [\kk].%\min \Big\{\frac{4 \,\df(X,\Xt)^2}{ n_r(X) \, \delta_r^2(X)}, 1 \Big\}
	\end{align}
\end{cor}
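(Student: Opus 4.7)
The corollary is a direct composition of Proposition~\ref{prop:kmeans:misclass} with the triangle inequality~\eqref{eq:kmeans:proj:closeness}, so the plan is essentially a two-line reduction, and the only real task is to verify that the constants line up correctly.

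First, I would fix an arbitrary $\Xt \in \Pc_\kappa(\Xh)$ and apply~\eqref{eq:kmeans:proj:closeness} with $X = \Xs$ (which is permissible since $\Xs \in \kmmt$). Together with the hypothesis $\df(\Xs,\Xh) \le \eps$, this yields
\begin{align*}
	\df(\Xs,\Xt) \;\le\; (1+\kappa)\,\df(\Xh,\Xs) \;\le\; (1+\kappa)\eps \;=:\; \eps'.
\end{align*}
Thus $\Xs$ and $\Xt$ are two \kmeans matrices in $\kmmt$ whose Frobenius distance is controlled by $\eps'$.

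Next, I would apply Proposition~\ref{prop:kmeans:misclass} with $X = \Xs$, $\Xt = \Xt$, and $\eps$ replaced by $\eps'$. Hypothesis~(a) of the proposition is exactly~(a) of the corollary. Hypothesis~(b) of the proposition requires $c_r^{-2}(\eps')^2/(\delta_r^2 n_r) < 1$ for each $r$, with the same constraint $c_r + c_\ell \le 1$ for $r \neq \ell$; substituting $\eps' = (1+\kappa)\eps$ turns this into precisely condition~(b) of the corollary. The proposition then delivers that $\Xt$ has exactly $\kk$ nonempty clusters and
\begin{align*}
	\Mis_r(\Xs;\Xt) \;\le\; \frac{c_r^{-2}\,(\eps')^2}{n_r\,\delta_r^2} \;=\; \frac{c_r^{-2}\,(1+\kappa)^2\eps^2}{n_r\,\delta_r^2}
\end{align*}
for each $r \in [\kk]$. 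Since $\Xt$ was an arbitrary element of $\Pc_\kappa(\Xh)$, the bound and the cluster-count conclusion hold uniformly over $\Pc_\kappa(\Xh)$, which is exactly the claim of the corollary.

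There is no genuine obstacle here: the proposition does all of the combinatorial work (pairing up centers of $\Xs$ with centers of $\Xt$ via the center-separation argument), and the only content of the corollary is to absorb the $(1+\kappa)$ factor introduced by projecting onto $\kmmt$. The one place to be careful is notational: the statement uses $n_r(\Xs)$ and $\delta_r(\Xs)$ (i.e., cluster sizes and separations measured on $\Xs$), which matches the ``$X$'' role in the proposition, so no re-indexing is needed.
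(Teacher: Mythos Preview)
Your proposal is correct and follows exactly the paper's own argument: use~\eqref{eq:kmeans:proj:closeness} to get $\df(\Xs,\Xt)\le(1+\kappa)\eps$, then feed this into Proposition~\ref{prop:kmeans:misclass} with $(1+\kappa)\eps$ in place of $\eps$. The paper states this in two sentences; your version just spells out the hypothesis-checking more explicitly.
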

As before, $\Mis_r(\Xs,\Pc_\kappa(\Xh))$ should be interpreted as $\max_{\Xt \in \Pc_\kappa(\Xh)} \Mis_r(\Xs,\Xt)$, that is, the result hold for any $\kappa$-approximate \kmeans solution for $\Xh$. 
In particular, under the conditions of Corollary~\ref{cor:kmeans:misclass} with $c_r = 1/2$, we have
\begin{align}\label{eq:Misinf:Misb:approx:kmeans}
%	\Misinf(\Xs,\Pc_\kappa(\Xh)) &\le \frac{4 (1+\kappa)^2 \,\eps^2}{\min_r n_r \delta_r^2}
%		 \le \frac{4 \,(1+\kappa)^2 \eps^2}{\ninf \delinf^2}, \\
	\Misb(\Xs,\Pc_\kappa(\Xh)) &\;\le\; 4 (1+\kappa)^2 \frac{\eps^2}{ n \,\delinf^2}.
\end{align}
A similar bound holds for $\Misinf$. Note that~\eqref{eq:Misinf:Misb:approx:kmeans} is of the desired form needed in~\eqref{eq:lqc:kmeans}.

\begin{rem}\label{rem:kmeans}
	Corollary~\ref{cor:kmeans:misclass} shows that any constant-factor approximation to the \kmeans problem~\eqref{eq:kmeans:prob} satisfies the LCQ property~\eqref{eq:lqc:kmeans}. Such approximations can be computed in polynomial time; see for example~\cite{kumar2004simple}. One can also use Lloyd's algorithm with \verb|kmeans++| initialization to get a $\kappa \lesssim \log k$ approximation~\cite{arthur2007k}. Both~\cite{kumar2004simple,arthur2007k} give constant probability approximations, hence if such algorithms are used in our subsequent results, ``w.h.p.'' should be interpreted as with high constant probability (rather than $1-o(1)$). Recently~\cite{lu2016statistical} has shown that Lloyd's algorithm with random initialization can achieve near optimal misclassification rate in certain random models. Their analysis is complementary to ours in that we establish and use the LQC property~\eqref{eq:lqc:kmeans} which uniformly holds for any input matrix $\Xh$ and we do not consider specific algorithms. The core ideas of our analysis in this section are borrowed from~\cite{lei2015consistency,jin2015fast}.
	It is worth noting that there are other algorithms that turn a general matrix into a \kmeans matrix, without trying to approximate the \kmeans problem~\eqref{eq:kmeans:prob}, and still satisfy the LQC. We will give one such example, Algorithm~\ref{alg:kmeans:replace}, in Appendix~\ref{app:kmeans:replace}. Interestingly, in contrast to the \kmeans approximation algorithms, the LQC guarantee for Algorithm~\ref{alg:kmeans:replace} is not probabilistic.
\end{rem}

%%%%%%%%%%%%%%%%%%%%%%%%%%%%%%%%%%%%
\section{Regularization and concentration}\label{sec:concent}

Here, we provide the details of Step~1, namely, the concentration of the regularized adjacency matrix. 
%We will use the non-symmetric version of~\cite[Theorem~2.1]{le2017concentration}. 
We start by a slight generalization of the results of~\cite{le2017concentration} to the rectangular case:

\begin{thm}\label{thm:concent:nonsym}
	Assume $n_1 \le n_2$ and let $A \in \{0,1\}^{n_1 \times n_2}$ have independent Bernoulli entries with mean $\ex[A_{ij}] = p_{ij}$. Take $d \ge \max_{ij} n_2\, p_{ij}$. Pick any subsets $\Ic_1 \subset [n_1]$ and $\Ic_2 \subset [n_2]$ such that
	\begin{align}\label{eq:I1:I2:defs}
		 |\Ic_1| \le 10 n_2 / d, \quad \text{and} \quad |\Ic_2| \le 10n_2/d
	\end{align}
	%$|\Ic_1| \le 10 n_1 / d$ and $|\Ic_2| \le 10n_2/d$ 
	and fix some $d' > 0$. Define a regularized adjacency matrix $\Are$ as follows: 
	\begin{enumerate}
		\item[(a)] Set $[\Are]_{ij} = A_{ij}$ for all $(i,j) \in \Ic_1^c \times \Ic_2^c$.
		\item[(b)] Set $[\Are]_{ij}$ arbitrarily when $i \in \Ic_1$ or $j \in \Ic_2$, but subject to the constraints $\norm{[\Are]_{i*}}_1 \le d'$ and $\norm{[\Are]_{*j}}_1 \le d'$ for all $i \in \Ic_1$ and $j \in \Ic_2$.
	\end{enumerate}
	Then, for any $r \ge 1$,
	%	Consider any subset consisting of at most $10n_2/d$ vertices,  and reduce the weights of the edges incident to those vertices in an arbitrary way. Let $d'$ be the maximal degree of the resulting graph. \zz{Maximal degree means the maximal row degree or maximal row and column degree?} \aaa{Most likely row degree, but please check.} 
	with probability at least $1-n_2^{-r}$, 
	%\aaa{Is this really $n_2$, just make sure} \zz{After we fill in zeros, the square matrix is $n_2\times n_2$, so here should be $n_2$ according to Le's paper.}, 
	the new adjacency matrix $\Are$ satisfies
	\begin{align}
	\opnorm{\Are - \ex [A] } \le C_2\, r^{3/2} (\sqrt{d} + \sqrt{d'}).
	\end{align}
	The same result holds if in step~(b) one uses $\ell_2$ norm instead of $\ell_1$.
\end{thm}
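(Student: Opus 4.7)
The plan is to reduce the rectangular statement to the symmetric concentration theorem of Le, Levina, and Vershynin~\cite{le2017concentration} by passing through the symmetric dilation operator $(\cdot)^\dagger$ introduced in Section~\ref{sec:dilation}. By Lemma~\ref{lem:dilation}(b), $\opnorm{\Are^\dagger - \Pdag} = \opnorm{\Are - \ex[A]}$, so it suffices to establish the bound for the symmetric $(n_1+n_2)\times(n_1+n_2)$ matrix $\Are^\dagger$ relative to $\Pdag = \ex[A^\dagger]$.

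To apply the LLV result I would first verify that $A^\dagger$ satisfies its hypotheses. The matrix is symmetric with $\{0,1\}$ entries, and each original entry $A_{ij}$ appears exactly once in the strict upper triangle (at position $(i,\,n_1+j)$), so the strict upper-triangular entries of $A^\dagger$ are mutually independent. For the degree parameter, every row sum of $\ex[A^\dagger]$ is either $\sum_j p_{ij}$ (top block, $i\le n_1$) or $\sum_i p_{ij}$ (bottom block, index $n_1+j$), both bounded by $n_2\cdot \max_{ij} p_{ij}\le d$ using $d\ge \max_{ij} n_2 p_{ij}$ and $n_1 \le n_2$. Thus the symmetric matrix $A^\dagger$ has maximum expected degree at most $d$.

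Next I would identify the ``bad'' index set in the dilated matrix as $\mathcal{I} := \mathcal{I}_1 \cup (n_1 + \mathcal{I}_2) \subset [n_1+n_2]$. Because the regularization in (a)--(b) is symmetric with respect to $\mathcal{I}_1$ and $\mathcal{I}_2$, the matrix $\Are^\dagger$ remains symmetric, and its bad row set equals its bad column set, both equal to $\mathcal{I}$. Its cardinality satisfies $|\mathcal{I}|\le 20\,n_2/d \le 20\,(n_1+n_2)/d$, matching the size budget in LLV up to a universal constant. The row-norm constraint translates cleanly: for $i\in \mathcal{I}_1$, row $i$ of $\Are^\dagger$ is $(0,\,[\Are]_{i*})$ with $\ell_1$ norm $\|[\Are]_{i*}\|_1 \le d'$; for $n_1+j$ with $j\in\mathcal{I}_2$, the row is $([\Are]_{*j}^T,\,0)$ with $\ell_1$ norm $\|[\Are]_{*j}\|_1\le d'$. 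The same verification works verbatim with $\ell_2$ in place of $\ell_1$, covering the final clause of the theorem.

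Applying the LLV theorem to $\Are^\dagger$ then yields, with probability at least $1-(n_1+n_2)^{-r}\ge 1-n_2^{-r}$, the bound $\opnorm{\Are^\dagger - \Pdag}\le C_2\, r^{3/2}(\sqrt d + \sqrt{d'})$, which by the dilation identity is exactly the desired bound on $\opnorm{\Are - \ex[A]}$. The main obstacle here is not probabilistic but careful bookkeeping: one must confirm that the rectangular regularization (a)--(b) corresponds to a valid symmetric regularization of $A^\dagger$ with the correct cardinality of bad indices, degree cap, and row-norm cap, so that the LLV theorem applies directly with only its universal constant re-absorbed into $C_2$.
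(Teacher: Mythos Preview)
Your dilation approach is correct, but it differs from the paper's route. The paper instead pads $A$ with $n_2-n_1$ rows of zeros to obtain a square $n_2\times n_2$ matrix and then invokes the \emph{directed} (non-symmetric) version of \cite[Theorem~2.1]{le2017concentration}. With zero-padding the dimension is exactly $n_2$, so the hypothesis $d\ge n_2\max_{ij}p_{ij}$ and the cardinality caps $|\Ic_r|\le 10\,n_2/d$ match the cited theorem with no constant adjustment, and $\opnorm{\cdot}$ is unchanged by appending zero rows.

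Your reduction via the symmetric dilation $A^\dagger$ has the advantage of citing the symmetric LLV theorem in its primary stated form and of reusing Lemma~\ref{lem:dilation} already set up in Section~\ref{sec:dilation}. The tradeoff is a small amount of constant bookkeeping: the dilated dimension is $n_1+n_2\le 2n_2$, so the LLV degree parameter for $A^\dagger$ is $(n_1+n_2)\max_{ij}p_{ij}\le 2d$ (your ``max expected degree $\le d$'' is true but not quite the parameter LLV uses), and the merged bad set has size $\le 20\,n_2/d\le 20(n_1+n_2)/d$; both slack factors are absorbed into $C_2$ exactly as you indicate. One point worth making explicit in a write-up: for a ``good'' row $i\in\Ic_1^c$, the entries $[\Are]_{ij}$ with $j\in\Ic_2$ may still be altered, but in $A^\dagger$ those sit in \emph{bad columns} $n_1+j\in\mathcal I$, so the symmetric LLV hypothesis (unchanged entries only on $\mathcal I^c\times\mathcal I^c$) is indeed satisfied.
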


Theorem~\ref{thm:concent:nonsym} follows directly from the non-symmetric (i.e., directed) version of~\cite[Theorem~2.1]{le2017concentration}, by padding $A$ with rows of zeros to get a square $n_2 \times n_2$ matrix. The result then follows  by the same argument as in \cite[Theorem~2.1]{le2017concentration}. The term ``arbitrary'' in the statement of the theorem includes any reduction even if the scheme is stochastic and depends on $A$ itself. This feature will be key in developing data-driven schemes.  % which is omitted for brevity. 

%However, since the statement and  proof of \cite[Theorem~2.1]{le2017concentration} is ambiguous for the non-symmetric case, for completeness, we will provide a detailed proof in Appendix~?.   % to which the result of~\cite{le2017concentration} is applicable.

To apply Theorem~\ref{thm:concent:nonsym}, take $\Ic_1 = \{i \in [n_1]: \sum_{j} A_{ij} > 2d\}$ and $\Ic_2 = \{j \in [n_2] : \sum_{i} A_{ij} > 2d\}$, i.e., the set of rows and columns with degrees larger than $2d$.
It is not hard to see that if $d$ is any upper bound on the expected row and column degrees, then the sizes of these sets satisfy~\eqref{eq:I1:I2:defs} with high probability. This follows, for example, from the same argument as that leading to~\eqref{eq:gamma:dev} in Appendix~\ref{sec:proof:deg:reg}. Recalling the scaling of the connectivity matrix in~\eqref{eq:nonsym:mean:def}, taking $d = n_2 \infnorm{P}$ gives an upper bound on the expected row and column degrees, assuming $n_2 \ge n_1$. Thus, we can apply 
%Recalling the scaling of the connectivity matrix in~\eqref{eq:nonsym:mean:def}, and applying  
Theorem~\ref{thm:concent:nonsym} with $ d' = d = \sqrt{n_2 / n_1} \infnorm{\Psi}  = n_2 \infnorm{P}  $, 
%\zz{Can we say $d=n_2 \infnorm{P} $?} \aaa{See my earlier comments}
to obtain the desired concentration bound~\eqref{eq:gen:concent}. Note that this concentration result does not require $d = n_2 \infnorm{P}$ to satisfy any lower bound (such as $d = \Omega (\log n_2)$). See Remarks~\ref{rem:comparison:reg} and~\ref{rem:comparison:consistent} for the significance of this fact.
% for the regularization described in Theorem~\ref{thm:concent:nonsym}.

A disadvantage of the regularization scheme described in Theorem~\ref{thm:concent:nonsym} is the required  knowledge of a good upper bound on $d =n_2 \infnorm{P} $. In the next section, under a SBM with some mild regularity assumptions, we develop a fully data-driven  scheme with the same guarantees as those of Theorem~\ref{thm:concent:nonsym}.

\subsection{Data-driven regularization}\label{sec:data:driven:trunc}

We now describe a regularization scheme that is data-driven and does not require the knowledge of $d$ as in Theorem~\ref{thm:concent:nonsym}. Let us write $D_i = \sum_{j=1}^{n_2} A_{ij}$ for the degree of node $i$ on side~1 and $\Db = \frac1{n_1} \sum_{i=1}^{n_1}D_i$ for the average degree on side~1. Consider the following order statistics for the degree sequence:
\begin{align}
D_{(1)} \ge D_{(2)}\ge \dots \ge D_{(n_1)}.
\end{align}
The idea is that under a block model, we can achieve concentration~\eqref{eq:gen:concent} by reducing the row degrees that are roughly above $D_{(\alpha)}$ for $\alpha = \lfloor n_1/\Db\rfloor$ (and similarly for the columns). The overall  scheme is described in Algorithm~\ref{alg:deg:reg}. The algorithm has a tuning parameter $\tau$; however, we will show that taking   $\tau = 3$ is enough. This constant is for convenience and has no special meaning. % that appears there is not special and can be replaced with any sufficiently large constant. 
Note that the regularization described in Step~\ref{step:set:Are2} is a special case of that in Step~\ref{step:set:Are}; it is equivalent to first reducing the row degrees by setting $[\Are]_{i*} \gets A_{i*} \dhat_1 / D_i$ for $i \in \Ich_1$ followed by reducing the column degrees $[\Are]_{*j} \gets [\Are]_{*j} \dhat_2 / D_j$ for $j \in \Ich_2$, or vice versa. Alternatively, we can replace the $\ell_1$ norm constraints in Step~\ref{step:set:Are} with $\ell_2$ norm version and replace Step~\ref{step:set:Are2} with  $[\Are]_{ij} =  A_{ij} \sqrt{w_i w'_j}$ for all $i\in[n_1], j\in [n_2]$.

\begin{algorithm}[t]
	\setstretch{1.4}
	\hspace*{\algorithmicindent} \textbf{Input:}  Adjacency matrix $A \in \reals^{n_1 \times n_2}$ and regularization parameter $\tau$. (Default: $\tau = 3$) \\
	\hspace*{\algorithmicindent} \textbf{Output:} Regularized adjacency matrix $\Are \in  \reals^{n_1 \times n_2}$.
	\begin{algorithmic}[1]
		\State  Form the degree sequence $D_i = \sum_{j=1}^{n_2} A_{ij}$ for $i=1,\dots,n$ and the corresponding order statistics: $D_{(1)} \ge D_{(2)}\ge \dots \ge D_{(n_1)}$.\label{step:1}
		\State Let $\Db = \frac1{n_1} \sum_{i=1}^{n_1}D_i$ and $\alpha = \lfloor n_1/\Db\rfloor$.
		\State Set $\dhat_1 = \tau D_{(\alpha)}$ and $\Ich_1 = \{i: D_i \ge \dhat_1 \}$. \label{step:3}
		\State Repeat Steps~\ref{step:1}--\ref{step:3} on $A^T$ to get $\Ich_2$ and $\dhat_2$, and let $D'_j$ be the corresponding degrees (i.e., column degrees in the original matrix $A$).
		\State 	Set $[\Are]_{ij} = A_{ij}$ for all $(i,j) \in \Ich_1^c \times \Ich_2^c$,
		%Choose $v_i \in \reals^{n_2}, i \in \Ich$ to be any collection of vectors such that $\norm{v_i}_1 \le \tau D_{(\alpha)}$.
		and set $[\Are]_{ij}$ arbitrarily when $i \in \Ich_1$ or $j \in \Ich_2$, but subject to: 
				$\norm{[\Are]_{i*}}_1 \le \dhat_1$ for all $i \in \Ich_1$ and $\norm{[\Are]_{*j}}_1 \le \dhat_2$ for all $j \in \Ich_2$. \label{step:set:Are}
		\State For example, $[\Are]_{ij} = A_{ij} w_i w'_j$ for all $(i,j) \in [n_1] \times [n_2]$, where $ w_i = \frac{\dhat_1}{D_i} \wedge 1$ and $w'_j= \frac{\dhat_2}{D'_j} \wedge 1$ satisfies the conditions in Step~\ref{step:set:Are}.\label{step:set:Are2}
		%Let $(\Are)_{i*} = v_i$ for $i \in \Ich$ and $(\Are)_{i*} = A_{i*}$ for $i \notin \Ich$
	\end{algorithmic}
	\caption{Data-driven adjacency regularization}
	\label{alg:deg:reg}
\end{algorithm}

\smallskip
Let $p_{ij} = \ex[A_{ij}]$ and define $d_i := \ex[ D_i] = \sum_j p_{ij}$ and
\begin{align}\label{eq:d:defs}
\db = \frac1{n_1} \sum_{i=1}^n d_i, \quad \dmax := \max_i d_i, \quad d := n_2 \cdot \max_{i,j} p_{ij}.
\end{align}
Note that $\db \le \dmax \le d$ and we have $\db = \ex[\Db]$, that is, the expected average degree of the network (on side~1). We need the following mild regularity assumption:
\begin{enumerate}[start=2, label=(A\arabic*)]
	\item \label{assum:deg} Consider the SBM model given by~\eqref{eq:nonsym:mean:def} and~\eqref{eq:matrix:ber:defn} with $n_1 \le n_2$.  
	%Assume that $t^*$ is the cluster that achieves the maximum expected degree on side~1, i.e., $d_i = \dmax$ for all $i$ such that $z_{1i} = t^*$. We similarly define $s^*$ for side 2. 
	%
	Assume further that for some $\beta \ge  1$, and for all $t \in [k_1]$ and $s \in [k_2]$,
	\begin{align}\label{eq:deg:trunc:conds}
	n_{1t} \ge \frac{n_1 }{ \beta k_1},\quad n_{2s} \ge \frac{n_2} {\beta k_2}, 
	\quad \text{and} \quad \Big(\frac{n_2}{n_1}\Big)^2 \max\{8\beta k_1, 8\beta k_2, 90\}  \;\le \;\db\;  \le\; \frac12 n_1.
	%(8\beta (k_1\vee k_2)) \vee 90  \;\le \;\db\;  \le\; \frac12 n_1, %\quad \frac{\db }{n_1} \le \frac12, \quad \db \ge 150.
	\end{align}
\end{enumerate}
%where $a \vee b = \max\{a,b\}$.
The next key lemma shows that $D_{(\alpha)}$ is a good proxy for $\dmax$ and $d$ under a SBM:
\begin{lem}\label{lem:deg:trunc}

	%for sufficiently small constants $c_1$ and $c_2$.
	Let $\alpha = \lfloor n_1/\Db\rfloor$ where $\Db$ is the average degree on side~1 as defined earlier. Under assumption~\ref{assum:deg}, the following hold, with probability at least $1-3 e^{- n_1/80}-e^{-n_1/(\beta k_2)}$, %\aaa{The two can be combined... }:
	\begin{itemize}
		\item [(a)] $d_{\max}/2\le D_{(\alpha)}\le 3d_{\max}/2$.
		\item [(b)] $\big|i: D_i>3D_{(\alpha)}\big| \le 10 n_1/ d$.
	\end{itemize}	
	
\end{lem}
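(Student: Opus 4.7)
The plan is to establish part~(a) via three concentration estimates and then deduce part~(b) as a corollary that uses the SBM balance on side~2.

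First, I would concentrate the average degree: $n_1 \Db = \sum_{ij} A_{ij}$ is a sum of $n_1 n_2$ independent Bernoullis with total mean $n_1 \db$ and variance $\le n_1 \db$, so Bernstein's inequality combined with $\db \ge 90$ from~\ref{assum:deg} yields $\Db \in [\db/2,\,3\db/2]$ with probability $\ge 1-2e^{-n_1/80}$, and hence $\alpha \in [2n_1/(3\db),\,2n_1/\db]$. For the lower bound $D_{(\alpha)} \ge \dmax/2$, let $t^*$ be a row cluster whose common expected degree equals $\dmax$. By~\ref{assum:deg}, $n_{1t^*} \ge n_1/(\beta \kk_1)$, and combined with $\db \ge 8\beta \kk_1$ this gives $\alpha \le n_1/(4\beta \kk_1) \le n_{1t^*}/4$. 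A multiplicative Chernoff bound yields $\Pr(D_i < \dmax/2) \le e^{-c\dmax}$ for each $i \in C_{t^*}$, a small constant since $\dmax \ge \db \ge 90$; a Bernoulli-count concentration over the $n_{1t^*}$ independent rows in $C_{t^*}$ then shows at least $n_{1t^*}/2 \ge \alpha$ of these satisfy $D_i \ge \dmax/2$. For the upper bound $D_{(\alpha)} \le 3\dmax/2$, I would apply a uniform Cram\'er-rate estimate (e.g., Bernstein with $d_i \le \dmax$) to get $\Pr(D_i > 3\dmax/2) \le e^{-c_0 \dmax}$ for every $i$, regardless of $d_i$; the expected count $n_1 e^{-c_0 \dmax}$ is far below $\alpha \asymp n_1/\db$ once $\dmax \ge 90$, and a Bernoulli-count concentration turns this into a strict sample inequality.

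For part~(b), combining the lower bound from~(a) with the inclusion $\{i : D_i > 3 D_{(\alpha)}\} \subseteq \{i : D_i > 3\dmax/2\}$, I only need to control $|\{i : D_i > 3\dmax/2\}|$. Here the SBM structure is crucial: taking $(t^*, s^*) \in \arg\max_{ts} B_{ts}$ and using $n_{2s^*} \ge n_2/(\beta \kk_2)$ from~\ref{assum:deg}, one has
\begin{equation*}
\dmax \;\ge\; n_{2s^*} B_{t^*s^*} \;\ge\; \frac{n_2}{\beta \kk_2}\,\max_{ts} B_{ts} \;=\; \frac{d}{\beta \kk_2}.
\end{equation*}
Feeding this into the uniform Chernoff bound gives $\Pr(D_i > 3\dmax/2) \le e^{-c_0 d/(\beta \kk_2)}$, and a Chernoff bound on the sum of $n_1$ independent such Bernoullis then yields $|\{i : D_i > 3\dmax/2\}| \le 10 n_1/d$ with failure probability $\le e^{-c'' n_1/(\beta \kk_2)}$, provided $d \gtrsim \beta \kk_2$, which follows from $\db \ge 8 \beta \kk_2$ in~\ref{assum:deg}.

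The main obstacle will be this last step: the naive uniform Chernoff alone produces only $n_1 e^{-c_0 \dmax}$, which is not small enough to yield the tight $10 n_1/d$ bound needed in order to apply Theorem~\ref{thm:concent:nonsym} with $d' = d$. The SBM cluster-balance inequality $\dmax \ge d/(\beta \kk_2)$ is the essential ingredient that converts an exponent in $\dmax$ into one in $d/(\beta \kk_2)$, sharpening the Chernoff rate enough to hit the target and explaining the $e^{-n_1/(\beta \kk_2)}$ term in the lemma's stated failure probability.
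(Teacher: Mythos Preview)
Your plan matches the paper's proof in structure: concentrate $\Db$ around $\db$, establish the lower bound on $D_{(\alpha)}$ by counting nodes in the maximum-degree cluster whose degrees stay above $\dmax/2$, establish the upper bound by counting all nodes whose degrees exceed $3\dmax/2$, and deduce~(b) from the inclusion $\{i:D_i > 3D_{(\alpha)}\} \subseteq \{i:D_i > 3\dmax/2\}$ together with the SBM relation $\dmax \ge d/(\beta k_2)$. The paper uses Prokhorov's inequality throughout and packages the count concentration as a reusable estimate $\Pr\big(\sum_i (Z_i - \ex Z_i) \ge n_1/(\gamma\, \dmax)\big) \le e^{-3n_1/(20\gamma)}$, but that is just a choice of tool.

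There is, however, a slip in your final step. You weaken the single-row tail to $q' := e^{-c_0 d/(\beta k_2)}$ \emph{before} applying the count concentration. This can fail when $\beta k_2$ is near its allowed maximum $\db/8$: in a degree-homogeneous model with $\db = \dmax = d$ one then has $d/(\beta k_2) \approx 8$, so $q' \approx e^{-0.8}$ is a constant, the expected count $n_1 q'$ far exceeds the target $10 n_1/d$, and no concentration inequality can force $\sum_i Z_i \le 10 n_1/d$ from that starting point. The correct way to exploit $\dmax \ge d/(\beta k_2)$ is the opposite one: keep the tight bound $q = e^{-c_0 \dmax}$ on each $\ex Z_i$, and instead lower-bound the \emph{threshold} via $n_1/d \ge n_1/(\beta k_2\, \dmax)$. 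The Bennett/Prokhorov exponent for the count then carries a factor $\log(1/q) \asymp \dmax$ multiplied by the (weakened) threshold $\asymp n_1/(\beta k_2\, \dmax)$; the two $\dmax$'s cancel and leave an exponent of order $n_1/(\beta k_2)$, which is exactly the rate stated in the lemma. This is precisely how the paper applies its reusable estimate, taking $\gamma = \beta k_2/9$.
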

Conditions in~\eqref{eq:deg:trunc:conds} are quite mild: The first and second require that the clusters associated with the maximum  degree are not too small. In the balanced case, where all clusters are of equal size, we have $n_{rt} = n_r/k_r$ (for all $t \in [k_r]$ and $r=1,2$) hence the condition is satisfied with $\beta=1$. In general, $\beta$ measures deviations from balancedness. Note that \eqref{asu:clust:prop} and the first two conditions in \eqref{eq:deg:trunc:conds} do not imply each other. The last condition in~\eqref{eq:deg:trunc:conds} is satisfied if $\db \to \infty$ slower than $n_1$ and faster than $\max\{k_1, k_2\}$ (assuming $\beta = O(1)$ and $n_1 \asymp n_2$).

%	A similar result as that of Lemma~\ref{lem:deg:trunc} holds for the column degrees, by appropriately redefining $D_i$, $d_i$, $\dmax$ to hold over columns but keeping $d$ as defined in~?.

%Lemma~\ref{lem:deg:trunc} gives a control on the row degrees. 
A result similar to Lemma~\ref{lem:deg:trunc} holds for the column degrees with appropriate modifications: Let $D'_j = \sum_{i} A_{ij}$ be the column degrees with expectation $d'_j = \sum_i p_{ij}$, and define the corresponding average $\db' = \frac1{n_2} \sum_j d'_j$ and $\dmax' = \max_j d'_j$. We, however, keep $d$ as defined in~\eqref{eq:d:defs}.
% That is because all we need is an upper bound on the maximum expected column degree which $d$ provides: $\dmax' \le n_1 \infnorm{P} \le d$ assuming $n_1 \le n_2$.
\begin{lem}\label{lem:deg:trunc:col}
	%for sufficiently small constants $c_1$ and $c_2$.
	Let $\alpha' = \lfloor n_2/\Db'\rfloor$ where $\Db'$ is the average degree on side~2 as defined earlier. Under assumption~\ref{assum:deg}, %\zz{1 and 2 is not exchangeable in the assumption. Should we define (A2') for side 2?} \aaa{Current one should work for both lemmas} 
	the following hold, with probability at least $1-3 e^{- n_2/80}-e^{-n_1/(\beta k_1)}$, %\aaa{The two can be combined... }:
	\begin{itemize}
		\item [(a)] $d_{\max}'/2\le D'_{(\alpha')}\le 3d'_{\max}/2$.
		\item [(b)] $\big|j: D'_j>3D'_{(\alpha')}\big| \le 10 n_2/ d$.
	\end{itemize}	
	
\end{lem}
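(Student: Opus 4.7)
The statement is precisely the column-side analogue of Lemma~\ref{lem:deg:trunc}, so the plan is to repeat that proof after swapping the roles of rows and columns, keeping careful track of the asymmetry that assumption~\ref{assum:deg} introduces through the convention $n_1 \le n_2$ and the scaling $\db \ge \max\{8\beta k_1, 8\beta k_2, 90\}(n_2/n_1)^2$.

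First I would concentrate the average column degree $\Db'$ around its mean $\db' = (n_1/n_2)\db$. Since $n_2\Db' = \sum_{i,j} A_{ij}$ is a sum of $n_1 n_2$ independent Bernoulli variables of total mean $n_1\db$, a multiplicative Chernoff bound combined with $\db \ge 90(n_2/n_1)^2$ (which gives $n_1\db \ge 90 n_2$ using $n_2\ge n_1$) yields $\Db' \in [\tfrac12\db',\tfrac32\db']$ outside an event of probability at most a constant multiple of $e^{-n_2/80}$. On this good event, $\alpha' = \lfloor n_2/\Db'\rfloor$ is pinned to within constant factors of $n_2/\db'$, which is the analogue of the corresponding step in the row-side argument.

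Next I would identify a ``peak cluster'' $s^* \in [k_2]$ whose members share the maximum expected column degree $\dmax'$; by the first inequality in~\eqref{eq:deg:trunc:conds}, this cluster contains at least $n_2/(\beta k_2)$ columns. Each $D'_j$ with $j$ in cluster $s^*$ is a sum of $n_1$ independent Bernoullis of mean $\dmax'$, and $\dmax'$ inherits a lower bound of the right order via the side-1 cluster sizes $n_{1t}\ge n_1/(\beta k_1)$ that appear in the identity $d'_j = \sum_t n_{1t} B_{ts^*}$. A Chernoff estimate followed by a union bound across the cluster (together with a matching argument producing the upper bound on $D'_{(\alpha')}$ via the side-1 max-degree cluster) then shows that enough of these $D'_j$ lie in $[\dmax'/2,3\dmax'/2]$, and it is at this step that the second exponential term $e^{-n_1/(\beta k_1)}$ is produced. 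Combining with the control on $\alpha'$ from the previous step yields part~(a).

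Part~(b) then follows by the same counting argument as in the row version: on the event of part~(a) we have $D'_{(\alpha')} \ge \dmax'/2$, so any column with $D'_j > 3 D'_{(\alpha')}$ exceeds its own expectation by at least a factor of $3/2$, and a Bernstein tail bound followed by Markov's inequality applied to the count $\sum_j 1\{D'_j > 3 D'_{(\alpha')}\}$ delivers the bound $10 n_2/d$. The main obstacle is the bookkeeping at each step to verify that the asymmetric conditions~\eqref{eq:deg:trunc:conds} still suffice on the column side; in particular, one cannot simply invoke Lemma~\ref{lem:deg:trunc} applied to $A^T$ as a black box, because $n_2 \ge n_1$ and the scaling factor $(n_2/n_1)^2$ are not invariant under transposition. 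Tracking how the factor $n_1/n_2$ propagates through the various Chernoff estimates is what converts the row-side exponents $n_1/80$ and $n_1/(\beta k_2)$ into the column-side exponents $n_2/80$ and $n_1/(\beta k_1)$ that appear in the stated probability.
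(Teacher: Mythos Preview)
Your overall strategy---repeat the row-side proof with careful bookkeeping of the asymmetry---is exactly what the paper does. However, you have misplaced where the $e^{-n_1/(\beta k_1)}$ term originates, and this reflects a small confusion about which cluster-size condition is used at which step.

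In the paper, part~(a) is literally a transcription of the row argument with $n_1$ replaced by $n_2$: the peak column cluster $s^*\in[k_2]$ has size $\ge n_2/(\beta k_2)$ (second condition in~\eqref{eq:deg:trunc:conds}), and the three error terms it produces are all of order $e^{-c\,n_2}$, giving the $3e^{-n_2/80}$ in the statement. The side-1 cluster sizes do \emph{not} enter here, and the term $e^{-n_1/(\beta k_1)}$ is not produced in part~(a). Your sentence about ``the side-1 max-degree cluster'' supplying the upper bound on $D'_{(\alpha')}$ is also off; the upper bound argument works uniformly over all columns and does not single out a cluster.

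The $e^{-n_1/(\beta k_1)}$ term actually arises in part~(b), and this is where the genuinely asymmetric work happens. One needs to compare $\dmax'$ to $d = n_2\|P\|_\infty$ (not to $n_1\|P\|_\infty$), and the bridge is the first condition in~\eqref{eq:deg:trunc:conds}: from $n_{1t}\ge n_1/(\beta k_1)$ one gets $\dmax' \ge n_1\|P\|_\infty/(\beta k_1) = d/(\alpha\beta k_1)$ with $\alpha = n_2/n_1$. This, combined with the assumption $\db \ge 8\beta k_1\alpha^2$ and the identity $\db = \alpha\db'$, yields $(\dmax')^2 \ge 8d$, after which the Prokhorov/Bennett tail bound~\eqref{eq:gamma:dev} applied with $\gamma = \alpha\beta k_1/9$ gives the final probability $\exp(-27 n_2/(20\alpha\beta k_1)) = \exp(-27 n_1/(20\beta k_1))$. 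So the factor $(n_2/n_1)^2$ in~\eqref{eq:deg:trunc:conds} is there precisely to make part~(b) on the column side go through.
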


Equipped with Lemmas~\ref{lem:deg:trunc} and \ref{lem:deg:trunc:col}, we have the following  for Algorithm~\ref{alg:deg:reg}:
\begin{thm}\label{thm:concent:data:driven}
	Assume that $A \in \reals^{n_1 \times n_2}$ is generated from the SBM model satisfying~\ref{assum:deg}. %given by~\eqref{eq:nonsym:mean:def} and~\eqref{eq:matrix:ber:defn} with $n_1 \le n_2$ and . %assume conditions~\eqref{eq:deg:trunc:conds}. 
	%\aaa{[Removed $\beta_2$ assumption, please check. It seems already included in Lemma 3.]} 
	%Further assume that for some constant $ \beta_2 \ge 1$, $n_{2t} \ge n_2 /(\beta_2 k_2)$ for all $t \in [k_2]$. 
	%Let $d =\max_{ij} n_2\, p_{ij}$ where $p_{ij} = \ex[A_{ij}]$. 	 
	Then, for any $r \ge 1$, with probability at least $1-n_2^{-r} - 6 e^{-n_1/80} - 2 e^{-n_1 / \beta \max\{k_1, k_2\}}$,
	%\aaa{what is $n$ here?} \zz{I guess it should be $n_2$ again.}, 
	the regularized output $\Are$ of Algorithm~\ref{alg:deg:reg} (with $\tau = 3$) satisfies
	\begin{align}\label{eq:op:diff:bound}
	\opnorm{\Are - \ex [A] } \le C_3 r^{3/2}\sqrt{\dg},
	\end{align} 
	where $\dg$ is as in~\eqref{eq:d:defs} and $C_3 > 0$ is a universal constant.  %$c_2 > 0$ is a constant that only depends on $\beta$. 
\end{thm}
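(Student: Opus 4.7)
The plan is to reduce to Theorem~\ref{thm:concent:nonsym} by verifying that the data-driven quantities $\Ich_1, \Ich_2, \dhat_1, \dhat_2$ produced by Algorithm~\ref{alg:deg:reg} satisfy the hypotheses of the oracle concentration result. Specifically, I would invoke Theorem~\ref{thm:concent:nonsym} with $\Ic_1 = \Ich_1$, $\Ic_2 = \Ich_2$, and $d' := \max\{\dhat_1, \dhat_2\}$, working on the intersection $E$ of the high-probability events from Lemmas~\ref{lem:deg:trunc} and~\ref{lem:deg:trunc:col}. A union bound combined with $n_1 \le n_2$ (so that $e^{-n_2/80} \le e^{-n_1/80}$) gives $\Pr(E^c) \le 6 e^{-n_1/80} + 2 e^{-n_1/(\beta \max\{k_1,k_2\})}$, which is the non-$n_2^{-r}$ part of the advertised failure probability.

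On $E$, Lemma~\ref{lem:deg:trunc}(a) yields $\dhat_1 = 3 D_{(\alpha)} \le \tfrac{9}{2}\dmax \le \tfrac{9}{2}\dg$, and symmetrically Lemma~\ref{lem:deg:trunc:col}(a) yields $\dhat_2 \le \tfrac{9}{2} d'_{\max} \le \tfrac{9}{2}\dg$, where the last step uses $d'_{\max} \le n_1 \max_{ij} p_{ij} = (n_1/n_2)\dg \le \dg$ since $n_1 \le n_2$. Hence $d' \le \tfrac{9}{2}\dg$. For the index sets, Lemma~\ref{lem:deg:trunc}(b) gives
\begin{align*}
|\Ich_1| \;=\; \bigl|\{i: D_i \ge 3 D_{(\alpha)}\}\bigr| \;\le\; 10 n_1/\dg \;\le\; 10 n_2/\dg,
\end{align*}
and Lemma~\ref{lem:deg:trunc:col}(b) gives $|\Ich_2| \le 10 n_2/\dg$, so condition~\eqref{eq:I1:I2:defs} is met.

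Next, I would check that $\Are$ from Algorithm~\ref{alg:deg:reg} meets conditions (a) and (b) of Theorem~\ref{thm:concent:nonsym} with this $d'$. Condition~(a) is immediate since Step~\ref{step:set:Are} leaves $[\Are]_{ij} = A_{ij}$ on $\Ich_1^c \times \Ich_2^c$, and condition~(b) holds because the row/column constraints enforced by the algorithm give $\|[\Are]_{i*}\|_1 \le \dhat_1 \le d'$ for $i \in \Ich_1$ and $\|[\Are]_{*j}\|_1 \le \dhat_2 \le d'$ for $j \in \Ich_2$. Theorem~\ref{thm:concent:nonsym} then yields, on an event of probability at least $1-n_2^{-r}$,
\begin{align*}
\opnorm{\Are - \ex[A]} \;\le\; C_2\, r^{3/2} \bigl(\sqrt{\dg} + \sqrt{d'}\bigr) \;\le\; C_2\bigl(1 + \sqrt{9/2}\bigr)\, r^{3/2}\sqrt{\dg},
\end{align*}
and intersecting with $E$ gives~\eqref{eq:op:diff:bound} with $C_3 = C_2(1+\sqrt{9/2})$ and the stated failure probability.

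All the substantive content is carried by Lemmas~\ref{lem:deg:trunc} and~\ref{lem:deg:trunc:col}; once they are in hand the reduction above is essentially bookkeeping. The delicate point is the calibration of the constant $\tau = 3$ in Algorithm~\ref{alg:deg:reg}: the two-sided control $\dmax/2 \le D_{(\alpha)} \le 3\dmax/2$ from Lemma~\ref{lem:deg:trunc}(a) turns $D_{(\alpha)}$ into a faithful surrogate for $\dmax$, while the factor $\tau = 3$ is precisely what makes the gap $3 D_{(\alpha)} - \dmax$ of order $\dmax$, so that the tail count $|\{i: D_i > 3 D_{(\alpha)}\}|$ can be kept under the $10 n_1/\dg$ threshold demanded by Theorem~\ref{thm:concent:nonsym}. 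The real obstacle in the full argument is therefore proving those degree-concentration lemmas, which will require Bernstein/Chernoff control of $\Db$ around $\db$ together with a careful count of high-degree vertices exploiting the SBM block structure.
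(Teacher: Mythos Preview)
Your proposal is correct and follows essentially the same route as the paper's own proof: work on the intersection of the high-probability events from Lemmas~\ref{lem:deg:trunc} and~\ref{lem:deg:trunc:col}, deduce that $|\Ich_r| \le 10n_2/d$ and $\dhat_r \le \tfrac{9}{2}d$ for $r=1,2$, then invoke Theorem~\ref{thm:concent:nonsym} with $d' = \tfrac{9}{2}d$. The paper's argument is slightly more terse (it takes $d' = 9d/2$ directly rather than passing through the random $\max\{\dhat_1,\dhat_2\}$), but the logic and constants are identical, and your added detail on the union bound for the failure probability is accurate.
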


\begin{proof}[Proof of Theorem~\ref{thm:concent:data:driven}]
	Recall the definitions of $\Ich_r$ and $\dhat_r$ ($r=1,2$) in Algorithm~\ref{alg:deg:reg}. On the union of the events described in Lemmas~\ref{lem:deg:trunc} and~\ref{lem:deg:trunc:col}, both $|\Ich_1|$ and $|\Ich_2|$ are bounded by $10n_2/d$ (since $n_1 \le n_2$), $\dhat_1 = 3 D_{(\alpha)} \le 9\dmax/2$ and $\dhat_2 = 3 D'_{(\alpha')} \le 9\dmax'/2$.
	Since  $\dmax \le n_2 \infnorm{P} = d$ and $\dmax' \le n_1 \infnorm{P} \le d$, we can apply Theorem~\ref{thm:concent:nonsym} with $d' = 9d/2$ completing the proof.
\end{proof}
Theorem~\ref{thm:concent:data:driven} thus provides the same concentration guarantee as in Theorem~\ref{thm:concent:nonsym} without the knowledge of $d$. 
See Appendix~\ref{sec:proof:deg:reg} for the proof of the two lemmas of this section.

\begin{rem}[Comparison with existing work]\label{rem:comparison:reg}
	Results of the form~\eqref{eq:gen:concent} %described in~\eqref{thm:concent:nonsym} 
	hold for $A$ itself without any regularization if one further assumes that $d \gtrsim \log n_2$. These result are often derived for the symmetric case; see for example~\cite{Tomozei2011,lei2015consistency,chen2016statistical} or~\cite{bandeira2016sharp} for the more general result with $d = \max_i \sum_{j} p_{ij}$. In order to break the $\log n_2$ barrier on the degrees, one has to resort to some form of regularization.  
	When $d$ is given, the general regularization for the adjacency matrix is to either to remove the high degree nodes as in~\cite{chin2015stochastic} or reduce their effect as in~\cite{le2017concentration} and Theorem~\ref{thm:concent:nonsym} above. %\aaa{Need to mention the advantages of the data-driven one... no other methods achieves the generality we have, please cite and compare. Can you compare with existing results?} 
	%Using this idea, a similar regularization method was investigated in 
	In contrast, there is little work on data-driven regularization for the adjacency matrix.
	One such algorithm was investigated in~\cite{gao2017achieving}
	%where no $d \gtrsim \log n_2$ requirement is imposed but the result only applies to
	for the special case of the SPBB model---discussed in Example~\ref{exa:sbpp} (Section~\ref{sec:consist:res})---under the assumption $a=O(b)$. The algorithm truncates the degrees to a multiple of the average degree, i.e., $\dhat_1 = C \Db$ for some large $C > 0$.
	%The regularization threshold is claimed to be $\dhat_1 = C \bar d$ for some large $C$, but an explicit way for choosing $C$ is not provided. 
	A possible choice of $C=5$ is given in \cite{yun2014accurate} assuming that the expected degrees of the nodes are similar. 
	
	In contrast to existing results, our data-driven regularization holds for a general SBM and only requires the mild assumptions in~\ref{assum:deg} while preserving the same upper bound~\eqref{eq:op:diff:bound} that holds with the knowledge of $d$. In particular, we do not require $\min_{k\ell} \Psi_{k\ell} \asymp \max_{k\ell} \Psi_{k\ell}$ (which is what $a = O(b)$ means in the SPBB model). Algorithm~\ref{alg:deg:reg} enables effective and provable regularization without knowing any parameters.
	
%	The regularization for the normalized Laplacian is somewhat different since there the low degree nodes are problematic. The general approach is to either inflate all the edges by a small amount before forming the Laplacian as is done in~\cite{amini2013pseudo} and analyzed in~\cite{le2017concentration} (see also~\cite{Joseph2013}), or to directly inflate just the degrees as in~\cite{chaudhuri2012spectral}. Since similar concentration bounds hold, at least for the former approach based on the work of~\cite{le2017concentration}, much of the results of this paper also apply to the Laplacian based spectral clustering. The details have been omitted for brevity.
\end{rem}

\section{Consistency results}\label{sec:consist:res}
We now state our various consistency results. The proofs are deferred to Appendix~\ref{sec:proof:consist:res}. We start with a refinement of Theorem~\ref{thm:sc:prototype} for the specific algorithm \scone given in Algorithm~\ref{alg:scone}.

\begin{thm}\label{thm:scone}
	Consider the spectral algorithm \scone given in Algorithm~\ref{alg:scone}. Assume $\kk_1 = \kk =: \min\{\kk_1,\kk_2\}$, and for a sufficiently small $C > 0$,
	\begin{align*}
		\kk \dg \, \sigma_\kk^{-2} \le C(1+\kappa)^{-2}.
	\end{align*}
	  Then, under the SBM model satisfying~\ref{assum:deg}, w.h.p.,
	\begin{align*}
	\Misb(\Pc_\kappa(\Zh_1),\Zb_1) \; \lesssim \; (1+\kappa)^2 \beta_1 \Big(\frac{\dg}{\sigma_\kk^2}\Big).
	\end{align*}
	where $\beta_1$ is given in~\eqref{asu:clust:prop} and $\dg$ is defined in~\eqref{eq:gen:concent}.
\end{thm}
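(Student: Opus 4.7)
The plan is to follow the three-step blueprint established in the proof of Theorem~\ref{thm:sc:prototype}, with the abstract LQC property replaced by the concrete bound from Corollary~\ref{cor:kmeans:misclass} tailored to a $\kappa$-approximate \kmeans solver. First, since \scone employs the data-driven regularization of Algorithm~\ref{alg:deg:reg}, Theorem~\ref{thm:concent:data:driven} delivers the concentration bound~\eqref{eq:gen:concent} with high probability under~\ref{assum:deg}. Feeding this into Lemma~\ref{lem:DK:Z:dev} yields $\fnorm{\Zh_1 - \Zb_1 \Usi Q} \le (C_2/\sigma_\kk)\sqrt{\kk \dg}$ for some $Q \in \ort{\kk}{\kk}$. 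I set $O := \Usi Q$, $\Xs := \Zb_1 O$, and $\eps^2 := C_2^2 \kk \dg / \sigma_\kk^2$, so that $\fnorm{\Zh_1 - \Xs} \le \eps$.

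Next, I would verify that $\Xs$ is a \kmeans matrix with exactly $\kk_1 = \kk$ nonempty clusters and compute its minimum center separation exactly as in the prototype argument. Because $\kk_1 = \kk$, the factor $O \in \ort{\kk_1}{\kk}$ is a square orthogonal matrix, so right-multiplication by $O$ is a row-wise isometry (cf.~\eqref{eq:ort:isom:1}); in particular the $\kk_1$ distinct rows of $\Zb_1 = Z_1 N_1^{-1/2}$ are preserved in $\Xs$ with pairwise distances giving
\begin{align*}
\delinf^2(\Xs) \;=\; \min_{t \neq s} \bigl(n_{1t}^{-1} + n_{1s}^{-1}\bigr) \;\ge\; \frac{2\kk_1}{\beta_1 n_1}
\end{align*}
by assumption~\eqref{asu:clust:prop}. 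Moreover, since $O$ is an invertible square orthogonal matrix, the cluster labels encoded by $\Xs$ coincide with those of $\Zb_1$ up to permutation, so $\Misb(\Pc_\kappa(\Zh_1),\Zb_1) = \Misb(\Pc_\kappa(\Zh_1),\Xs)$.

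Finally, I would invoke Corollary~\ref{cor:kmeans:misclass} applied to $\Xs$ and $\Zh_1$ with $c_r = 1/2$. Its condition~(b) becomes $4(1+\kappa)^2 \eps^2/(\delta_r^2 n_{1r}) < 1$ for every $r$; since $\delta_r^2 n_{1r} \ge n_{1r}(n_{1r}^{-1} + n_{1s}^{-1}) \ge 1$ for the $s$ achieving the minimum in $\delta_r$, this is secured by the smallness hypothesis $\kk \dg/\sigma_\kk^2 \le C(1+\kappa)^{-2}$ once $C$ is taken small relative to $4 C_2^2$. The corollary then yields
\begin{align*}
\Misb(\Pc_\kappa(\Zh_1),\Xs) \;\le\; \frac{4(1+\kappa)^2 \eps^2}{n_1 \delinf^2} \;\le\; 2 C_2^2 (1+\kappa)^2 \beta_1 \,\frac{\dg}{\sigma_\kk^2}
\end{align*}
using $\kk_1 = \kk$, which is the claimed bound.

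The main obstacle is purely cosmetic: one must check that the smallness hypothesis actually implies condition~(b) of Corollary~\ref{cor:kmeans:misclass} uniformly over possibly unbalanced cluster configurations, where $\delta_r^2 n_{1r}$ can be as small as a constant. The $(1+\kappa)^{-2}$ factor in the hypothesis is included precisely to absorb the $(1+\kappa)^2$ arising from the $\kappa$-approximate \kmeans bound, so this verification goes through without extra assumptions. Everything else is routine bookkeeping, and the high-probability qualifier collects the failure event from Theorem~\ref{thm:concent:data:driven}.
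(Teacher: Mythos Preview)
Your proposal is correct and follows essentially the same approach as the paper's proof: both run the three-step blueprint from Theorem~\ref{thm:sc:prototype}, verify condition~(b) of Corollary~\ref{cor:kmeans:misclass} via the estimate $n_{1t}\delta_t^2 \ge 1$, and conclude by the computation of $\delinf^2(\Xs)$ together with~\eqref{asu:clust:prop}. The only (harmless) discrepancy is that you invoke Theorem~\ref{thm:concent:data:driven} for the concentration step, while the paper cites Theorem~\ref{thm:concent:nonsym}; your choice is in fact the more accurate one, since \scone uses the data-driven regularization of Algorithm~\ref{alg:deg:reg}.
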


One can take $\kappa$ to be a fixed small constant say $1.5$, since there are $\kappa$-approximate \kmeans algorithms for any $\kappa > 1$. In that case, $(1+\kappa)^2$ can be absorbed into other constants, and the bound in Theorem~\ref{thm:scone} is qualitatively similar to Theorem~\ref{thm:sc:prototype}.

\begin{algorithm}[t]
	\setstretch{1.2}
	\caption{\scone}
	\begin{algorithmic}[1]
		\medskip
		%\State Apply degree-reduction regularization of Theorem~\ref{thm:concent:nonsym} to $A$ to obtain $\Are$.
		\State  Apply  regularization Algorithm~\ref{alg:deg:reg} to $A$ to obtain $\Are$.
		\State Obtain the $\kk$-truncated SVD of $\Are$ as $\Arek = \Zh_1 \Sigh \Zh_2^T$. See~\eqref{eq:k:reduced:SVD}.
		\State Output an element of $\Pc_\kappa(\Zh_1)$, i.e., a $\kappa$-approximate \kmeans solution for input $\Zh_1$.
	\end{algorithmic}
	\label{alg:scone}
\end{algorithm}

\begin{algorithm}[t]\label{alg:scrr}
	\setstretch{1.2}
	\caption{\scrr}
	\begin{algorithmic}[1]
		\medskip
		%\Procedure{CH\textendash Election}{}
		%\State Apply degree-reduction regularization of Theorem~\ref{thm:concent:nonsym} to $A$ to obtain $\Are$.
		\State Apply  regularization Algorithm~\ref{alg:deg:reg} to $A$ to obtain $\Are$.
		\State Obtain the best rank $\kk$ approximation of $\Are$, that is, $\Arek = \trunc_\kk(\Are)$. See~\eqref{eq:k:reduced:SVD}.
		\State Output an element of $\Pc_\kappa(\Arek)$, i.e., a $\kappa$-approximate \kmeans solution for input $\Arek$.
	\end{algorithmic}
	\label{alg:scrr}
\end{algorithm}

\subsection{Reduced-rank SC} 
Theorem~\ref{thm:scone} implicitly assumes $\sigma_k > 0$, otherwise the bound is vacuous. This assumption is clearly violated if $B$ is rank deficient (or equivalently, $P$ has rank less than $k$). A variant of SC suggested in~\cite{yun2014accurate,gao2018community} can resolve this issue. The idea is to use the entire rank $\kk$ approximation of $\Are$, and not just the singular vector matrix $\Zh_1$, as the input to the \kmeans step. This approach, which we call reduced-rank SC, or \scrr, is detailed in Algorithm~\ref{alg:scrr}. Recall the SV truncation operator $\trunc_\kk$ given in~\eqref{eq:k:reduced:SVD}. It is well-known that $\trunc_\kk$ maps every matrix to its best rank-$\kk$ approximation in Frobenius norm, i.e.,
\begin{align*}
\trunc_\kk(\Are) = \min \big\{\fnorm{R - \Are}:\; \rank(R) \le \kk \big\}
\end{align*}
with the approximation error satisfying
\begin{align}\label{eq:approx:err:k:reduced:SVD}
\opnorm{\trunc_\kk(\Are) - \Are} = \sigma_{\kk+1}(\Are).
\end{align}
\scrr uses this best rank-$\kk$ approximation as a denoised version of $\Are$ and runs a \kmeans algorithm on its rows. 
 To analyze \scrr, we need to replace bound~\eqref{eq:Z:dev:bound} in Step~2 with an appropriate modification. The following lemma replaces Lemma~\ref{lem:DK:Z:dev} and provides the necessary bound in this case. 

\begin{lem}\label{lem:RR:dev}
	Let $\Arek = \trunc_\kk(\Are)$ be the $k$-truncated SVD of $\Are$ and assume that the concentration bound~\eqref{eq:gen:concent} holds. Then,
	\begin{align}\label{eq:RR:dev}
		\fnorm{\Arek - P} \le C \sqrt{8\,\kk \dg}.
	\end{align}
\end{lem}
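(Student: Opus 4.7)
The plan is to exploit the key fact that both $\Arek$ and $P$ are low-rank: $\Arek$ has rank at most $k$ by construction, and $P$ has rank at most $k = \min\{k_1, k_2\}$ by Lemma~\ref{lem:P:svd}. Hence $\rank(\Arek - P) \le 2k$, and one can convert the operator-norm concentration bound~\eqref{eq:gen:concent} into a Frobenius-norm bound at a multiplicative cost of only $\sqrt{2k}$, via the general inequality $\fnorm{M} \le \sqrt{\rank(M)}\, \opnorm{M}$.

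First I would write $\fnorm{\Arek - P} \le \sqrt{2k}\,\opnorm{\Arek - P}$ and then apply the triangle inequality to split the operator norm as $\opnorm{\Arek - P} \le \opnorm{\Arek - \Are} + \opnorm{\Are - P}$. The first summand equals $\sigma_{k+1}(\Are)$ by the Eckart--Young characterization recorded in~\eqref{eq:approx:err:k:reduced:SVD}. Since $\rank(P) \le k$ we have $\sigma_{k+1}(P) = 0$, and Weyl's inequality for singular values yields $\sigma_{k+1}(\Are) \le \sigma_{k+1}(P) + \opnorm{\Are - P} = \opnorm{\Are - P}$. Combining these observations gives $\opnorm{\Arek - P} \le 2\,\opnorm{\Are - P}$.

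Finally I would invoke the assumed concentration bound~\eqref{eq:gen:concent}, namely $\opnorm{\Are - P} \le C\sqrt{d}$, to conclude
\[
\fnorm{\Arek - P} \;\le\; \sqrt{2k}\cdot 2\,\opnorm{\Are - P} \;\le\; 2C\sqrt{2k\,d} \;=\; C\sqrt{8\,k\,d},
\]
which is exactly~\eqref{eq:RR:dev}. There is no real obstacle here; the argument is a routine ``rank $+$ operator norm $\Rightarrow$ Frobenius norm'' conversion, and the only nontrivial input is that $P$ inherits rank at most $k$ from Lemma~\ref{lem:P:svd}, so that Weyl's inequality controls $\sigma_{k+1}(\Are)$ by the same operator-norm quantity supplied by the concentration step.
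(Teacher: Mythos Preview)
Your proof is correct and follows essentially the same approach as the paper: both use the triangle inequality together with the Eckart--Young identity~\eqref{eq:approx:err:k:reduced:SVD} and Weyl's inequality to obtain $\opnorm{\Arek - P} \le 2\,\opnorm{\Are - P}$, and then convert to the Frobenius norm via the rank bound $\rank(\Arek - P)\le 2k$. The only difference is the order of presentation (you state the rank-to-Frobenius conversion first, the paper applies it last), which is immaterial.
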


	Comparing with~\eqref{eq:Z:dev:bound}, we observe that~\eqref{eq:RR:dev} provides an improvement by removing the dependence on the singular value gap $\sigma_{\kk}$. However, we note that in terms of the relative error, i.e., $\fnorm{\Arek - P}/\fnorm{P}$ this may or may not be an improvement. There are cases where $\fnorm{P} \approx \sqrt{\kk} \sigma_\kk$, in which case the relative error predicted by~\eqref{eq:RR:dev} is $O(\sqrt{\dg}/\sigma_k)$,  similar to the relative error bound based on~\eqref{eq:Z:dev:bound} (since $\norm{\Zb_1 \Usi Q}_F = \sqrt{k}$); see Example~\ref{exa:sbpp} below.

%	Assuming that  $ \mnorm{\Delt} = O(\sqrt{a})$ w.h.p. as before, we get $\mnorm{\Ah - P}_F = O(\sqrt{Ka})$. Compared to~\eqref{eq:Uh:dev}, this is improved by losing the factor $1/\lambda_K$. (\aaa{However, it might be that in terms of relative deviations, things stay the say way. $\mnorm{P}_F$ is different from $\mnorm{U}_F$. })
%	Let $\Ah = \Tc_K(\Are)$ where $\Are$ is the regularized adjacency matrix. 

	\medskip
	Following through the three-step analysis of Section~\ref{sec:analysis:sketch}, with~\eqref{eq:Z:dev:bound} replaced with~\eqref{eq:RR:dev}, we obtain a  qualitatively different bound on the misclassification error of Algorithm~\ref{alg:scrr}. The key is that center separation of $P$ treated as a \kmeans matrix is different from that of $\Zb_1 \Usi Q$. Note that $P$ is indeed a valid \kmeans matrix according to Definition~\eqref{eq:kmm:def}; in fact, $P \in \kmm(n_1,n_2,\kk_1)$. Similarly, $P^T \in \kmm(n_2,n_1,\kk_2)$. Let us define
	\begin{align}\label{eq:Psinf:defs}
	\Psinf[1]^2 &:= \min_{(s,t):\, s \neq t} \sum_{\ell=1}^{\kk_2} \pi_{2 \ell} ( \Psi_{ s \ell} - \Psi_{t \ell})^2,  \\
	\Psinft[1]^2 &:= \min_{(s,t):\, s \neq t} \Big[ \pi_{1t} \sum_{\ell=1}^{\kk_2} \pi_{2 \ell} ( \Psi_{ s \ell} - \Psi_{t \ell})^2 \Big].\label{eq:Psinft:defs}
	\end{align}
	%\aaa{Might need change.}
	
	\begin{thm}\label{thm:scrr}
		Consider the spectral algorithm \scrr given in Algorithm~\ref{alg:scrr}. Assume that for a sufficiently small $C_1 > 0$,
		\begin{align}\label{eq:scrr:asump}
			k \dg \,\Psinft[1]^{-2} \le C_1 (1+\kappa)^{-2}.
		\end{align}
		Then, under the SBM model satisfying~\ref{assum:deg}, with $\dg$ as defined in~\eqref{eq:gen:concent}, w.h.p.,
		\begin{align*}
		\Misb(\Pc_\kappa(\Arek),P) \; \le  \;C_1^{-1}\, (1+\kappa)^2 \Big(\frac{ k \dg}{\Psinf[1]^2}\Big).
		\end{align*}
	\end{thm}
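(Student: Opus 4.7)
The plan is to execute the three-step analysis blueprint of Section~\ref{sec:analysis:sketch}, swapping Lemma~\ref{lem:DK:Z:dev} for Lemma~\ref{lem:RR:dev} in Step~2, and invoking Corollary~\ref{cor:kmeans:misclass} on the pair $(\Xs, \Xh) = (P, \Arek)$ in Step~3.

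In Step~1, Theorem~\ref{thm:concent:data:driven} applied to the output $\Are$ of Algorithm~\ref{alg:deg:reg} gives the concentration bound~\eqref{eq:gen:concent} with high probability, which I condition on in everything that follows. In Step~2, Lemma~\ref{lem:RR:dev} upgrades this to the Frobenius-norm deviation $\fnorm{\Arek - P} \le \eps$ with $\eps^2 = 8 C^2 k \dg$, a bound that does \emph{not} involve $\sigma_k$ and is precisely what will distinguish the \scrr analysis from that of \scone.

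In Step~3, the crucial observation is that $P = Z_1 (B Z_2^T)$ is itself a \kmeans matrix, namely $P \in \kmm(n_1,n_2,k_1)$ with row labels given by $Z_1$ and centers $q_r(P) = (B Z_2^T)_{r,*}$ for $r \in [k_1]$. Expanding row differences and using $B = \Psi/\sqrt{n_1 n_2}$ together with $n_{2\ell} = \pi_{2\ell} n_2$ yields
\begin{align*}
	\|q_s(P) - q_t(P)\|_2^2 = \sum_{\ell=1}^{k_2} n_{2\ell}(B_{s\ell} - B_{t\ell})^2 = \frac{1}{n_1} \sum_{\ell=1}^{k_2} \pi_{2\ell}(\Psi_{s\ell} - \Psi_{t\ell})^2,
\end{align*}
so that $n_1 \delinf^2(P) = \Psinf[1]^2$ by definition~\eqref{eq:Psinf:defs}. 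A parallel calculation gives $n_r(P)\, \delta_r^2(P) = \pi_{1r} \min_{\ell\neq r} \sum_j \pi_{2j}(\Psi_{rj}-\Psi_{\ell j})^2$, and minimizing over $r$ amounts to minimizing over ordered pairs with the prefactor $\pi_{1r}$, which is exactly the form of $\Psinft[1]^2$ in~\eqref{eq:Psinft:defs}; hence $\min_r n_r(P)\, \delta_r^2(P) = \Psinft[1]^2$. Applying Corollary~\ref{cor:kmeans:misclass} with $c_r = 1/2$, its prerequisite $c_r^{-2}(1+\kappa)^2 \eps^2 /(n_r \delta_r^2) < 1$ becomes $32 C^2 (1+\kappa)^2 k\dg < \Psinft[1]^2$, which is implied by the hypothesis~\eqref{eq:scrr:asump} once $C_1$ is chosen no larger than $1/(32 C^2)$. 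The corollary then delivers
\begin{align*}
	\Misb(P, \Pc_\kappa(\Arek)) \;\le\; \frac{4(1+\kappa)^2 \eps^2}{n_1\, \delinf^2(P)} \;=\; \frac{32 C^2 (1+\kappa)^2 k \dg}{\Psinf[1]^2},
\end{align*}
which is the claimed bound after absorbing constants into $C_1^{-1}$.

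The genuinely delicate part is the bookkeeping in Step~3 that matches $\min_r n_r \delta_r^2$ to the definition of $\Psinft[1]^2$: because the prefactor $\pi_{1t}$ in~\eqref{eq:Psinft:defs} is asymmetric while the inner sum is symmetric in $s,t$, one has to observe that minimizing over ordered pairs effectively attaches the smaller of the two cluster proportions. Everything else is a repackaging of existing lemmas. It is also worth emphasizing that, unlike the proof of Theorem~\ref{thm:sc:prototype}, the center separation of $P$ carries the full signal strength $\Psinf[1]^2/n_1$ rather than the generic $\asymp 1/n_1$, and it is precisely this replacement (together with Lemma~\ref{lem:RR:dev} eliminating the $\sigma_k$ dependence) that explains why \scrr remains meaningful even when $B$ is rank-deficient.
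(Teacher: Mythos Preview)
Your proposal is correct and follows essentially the same route as the paper's proof: invoke the concentration bound, apply Lemma~\ref{lem:RR:dev} to get $\fnorm{\Arek-P}\le\eps$ with $\eps^2=8C^2k\dg$, identify $P$ itself as the target \kmeans matrix, compute $n_1\delinf^2(P)=\Psinf[1]^2$ and $\min_r n_r\delta_r^2(P)=\Psinft[1]^2$, and feed these into Corollary~\ref{cor:kmeans:misclass}. Your direct computation of the centers via $q_r(P)=(BZ_2^T)_{r*}$ is a slightly cleaner presentation than the paper's route through the orthogonalized factors $\Zb_2\Nb_2^{1/2}$, but the substance is identical.
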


	As is clear from the proof, one can take $C_1 = 1/(32C^2)$ where $C$ is the constant in concentration bound~\eqref{eq:gen:concent}. Condition~\eqref{eq:scrr:asump} can be replaced with the stronger assumption
	\begin{align}
		k \dg \, (\pinf{1} \Psinf[1]^2)^{-1} \le C_1 (1+\kappa)^{-2}
	\end{align}
a	where $\pinf{1} := \min_{t \,\in\, [\kk_1]} \pi_{1t}$, since $\Psinft[1]^2 \ge \pinf{1} \Psinf[1]^2$.
	
	Although the bounds of Theorems~\ref{thm:scone} and~\ref{thm:scrr} are different, surprisingly, in the case of the planted partition model, they give the same result as the next example shows.
	\begin{exa}[Planted partition model, symmetric case]\label{exa:sbpp}
	Let us consider the simplest symmetric SBM, the symmetric balanced planted partition (SBPP) model, and consider the consequences of Theorems~\ref{thm:scone} and~\ref{thm:scrr} in this case. Recall that in the symmetric case we drop index $r$ from $\kk_r$, $n_r$, $n_{rj}$, $\Nb_r$, $\beta_r$, $\Psinf[r]$ and so on. SBPP is characterized by the following assumptions:
	\begin{align*}
		\Psi = b \onem_\kk + (a-b) I_\kk,\;\;  a \ge b,\quad \pi_{j} = n_{j}/n = \frac1\kk, \; \forall j \in [\kk].
	\end{align*}	
	Here, $\onem_\kk \in \rmat(\kk,\kk)$ is the all ones matrix and \emph{balanced} refers to all the communities being of equal size, leading to cluster proportions $\pi_j = 1/\kk$. In particular, $\beta = 1$, as defined in~\eqref{asu:clust:prop}. We have 	$\Bb = \Nb^{1/2} \Psi \Nb^{1/2} = \Psi / \kk$,
	recalling $\Nb = \diag(\pi_j)$. Hence, the smallest singular value of $\Bb$ is $\sigma_k = (a-b)/\kk$. Theorem~\ref{thm:scone} gives the following result:
	\begin{cor}\label{cor:sbpp:scone}
		Under the SBPP model, as long as $k^3 a /(a-b)^2$ is sufficiently small, \scone has average misclassification error of $O( \kk^2 a/(a-b)^2)$ with high probability.
	\end{cor}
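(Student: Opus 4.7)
\medskip
\noindent\textbf{Proof proposal.} The plan is to specialize Theorem~\ref{thm:scone} to the SBPP model, which amounts to computing the three quantities that enter its hypothesis and conclusion: the balancedness constant $\beta_1$, the concentration scale $\dg = \sqrt{n_2/n_1}\,\infnorm{\Psi}$, and the smallest singular value $\sigma_\kk$ of $\Bb = \Nb^{1/2}\Psi\Nb^{1/2}$. Two of these are immediate: the equal cluster proportions $\pi_j = 1/\kk$ make the harmonic mean in~\eqref{asu:clust:prop} equal to $1/\kk$, yielding $\beta_1 = 1$; and $n_1 = n_2$ together with $a \ge b \ge 0$ gives $\dg = \infnorm{\Psi} = a$.

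The one genuine computation is the spectrum of $\Bb = \Psi/\kk = (b\,\onem_\kk + (a-b) I_\kk)/\kk$. Since $\onem_\kk$ has eigenvalue $\kk$ on $1_\kk$ and $0$ on its orthogonal complement, the eigenvalues of $\Bb$ are $(a + (\kk-1)b)/\kk$ (simple) and $(a-b)/\kk$ (multiplicity $\kk-1$); both are nonnegative because $a \ge b \ge 0$, so the singular values coincide with the eigenvalues and $\sigma_\kk = (a-b)/\kk$. Plugging these values into Theorem~\ref{thm:scone}, the smallness hypothesis $\kk\dg\sigma_\kk^{-2} \le C(1+\kappa)^{-2}$ becomes $\kk^3 a/(a-b)^2 \lesssim 1$, which is exactly the ``sufficiently small'' assumption of the corollary, and the conclusion reads
\[
\Misb(\Pc_\kappa(\Zh_1),\Zb_1) \;\lesssim\; \beta_1 \frac{\dg}{\sigma_\kk^2} \;=\; \frac{\kk^2 a}{(a-b)^2},
\]
after absorbing the fixed factor $(1+\kappa)^2$ into $\lesssim$.

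The main point of care is verifying that the regularity assumption~\ref{assum:deg}, inherited through Algorithm~\ref{alg:deg:reg}, is in force. The cluster-size lower bounds $n_{rt} \ge n_r/(\beta \kk)$ hold with equality for $\beta = 1$, so the nontrivial condition is the degree-growth bound $\max\{8\kk,90\} \lesssim \db \le n/2$. Here $\db = (a + (\kk-1)b)/\kk \ge a/\kk$, and since the corollary is only informative when $\kk^2 a/(a-b)^2 \ll 1$ and $a - b \le a$, the hypothesis forces $a \gg \kk^2$, whence $\db \gg \kk$; the upper bound $\db \le n/2$ follows from the standing constraint that the entries of $B = \Psi/n$ lie in $[0,1]$ (possibly after a harmless tightening of constants). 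I do not anticipate any genuine difficulty beyond this parameter bookkeeping—the substance of the proof is in Theorem~\ref{thm:scone}, and the corollary is a direct specialization.
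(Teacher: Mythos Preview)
Your proposal is correct and follows essentially the same approach as the paper: identify $\beta_1=1$ from balancedness, compute $\Bb=\Psi/\kk$ so that $\sigma_\kk=(a-b)/\kk$, note $\dg=a$, and plug into Theorem~\ref{thm:scone}. The paper's inline argument in Example~\ref{exa:sbpp} does exactly this without explicitly verifying~\ref{assum:deg}; your additional bookkeeping on that assumption is fine (and in fact your hypothesis forces $a\gtrsim \kk^3$, comfortably above the $\db\gtrsim\kk$ requirement).
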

	
	Now consider \scrr. Using definitions~\eqref{eq:Psinf:defs}, we have $\kk \Psinft^2 = \Psinf^2 =  2(a-b)^2/\kk$. Then, Theorem~\ref{thm:scrr} gives  the exact same result for \scrr:	
	\begin{cor}\label{cor:sbpp:scrr}
		Corollary~\ref{cor:sbpp:scone} holds with \scone replaced with \scrr.
	\end{cor}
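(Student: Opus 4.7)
The plan is to invoke Theorem~\ref{thm:scrr} directly on the SBPP instance, so that the corollary reduces entirely to evaluating the quantities $\Psinf[1]^2$ and $\Psinft[1]^2$ from~\eqref{eq:Psinf:defs}--\eqref{eq:Psinft:defs} under the SBPP parameters, together with identifying $\dg$. Since SBPP has balanced proportions $\pi_j = 1/\kk$ on both sides, assumption~\ref{assum:deg} is met under the same implicit degree-growth condition used for \scone, so no additional probabilistic work is needed beyond what Theorem~\ref{thm:scrr} already provides.

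The main calculation is to note that, for any distinct $s,t \in [\kk]$, the vector $(\Psi_{s\ell} - \Psi_{t\ell})_\ell$ has exactly two nonzero entries, each of magnitude $a-b$, at positions $\ell=s$ and $\ell=t$. Averaging with the uniform weights $\pi_{2\ell} = 1/\kk$ gives a value independent of $(s,t)$, so $\Psinf[1]^2 = 2(a-b)^2/\kk$. The extra factor $\pi_{1t}=1/\kk$ in definition~\eqref{eq:Psinft:defs} then yields $\Psinft[1]^2 = 2(a-b)^2/\kk^2$. In the symmetric case $n_1=n_2=n$, the parameter $\dg$ from~\eqref{eq:gen:concent} reduces to $\infnorm{\Psi} = a$.

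Plugging these into~\eqref{eq:scrr:asump} shows that $\kk \dg / \Psinft[1]^2 \asymp \kk^3 a/(a-b)^2$, which is exactly the smallness condition already stated in Corollary~\ref{cor:sbpp:scone}. The conclusion of Theorem~\ref{thm:scrr} then reads $\Misb(\Pc_\kappa(\Arek),P) \lesssim \kk \dg/\Psinf[1]^2 \asymp \kk^2 a/(a-b)^2$, after absorbing $(1+\kappa)^2$ into the implicit constant (recall $\kappa$ can be fixed as a small constant). This matches the rate in Corollary~\ref{cor:sbpp:scone}.

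There is no substantive obstacle here; the proof is pure specialization. The only point worth flagging is the coincidence that, despite the conceptually different bounds of Theorems~\ref{thm:scone} and~\ref{thm:scrr}, the identity $\sigma_\kk = (a-b)/\kk$ makes $\dg/\sigma_\kk^2$ and $\kk\dg/\Psinf[1]^2$ agree up to absolute constants. This cancellation is a feature of the highly symmetric SBPP structure and breaks down for more general SBMs, as indicated by the forthcoming Example~\ref{exa:diff}.
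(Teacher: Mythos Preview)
Your proposal is correct and follows essentially the same approach as the paper: compute $\Psinf^2 = 2(a-b)^2/\kk$ and $\Psinft^2 = 2(a-b)^2/\kk^2$ under the SBPP parameters, identify $\dg = a$, and plug these into Theorem~\ref{thm:scrr}. The paper states only the one-line computation $\kk\Psinft^2 = \Psinf^2 = 2(a-b)^2/\kk$ before invoking Theorem~\ref{thm:scrr}, so your version simply makes the specialization more explicit.
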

	Results of Corollary~\ref{cor:sbpp:scone} and~\ref{cor:sbpp:scrr} are consistency results as long as $\kk^2 a /(a-b)^2 = o(1)$. A typical example is when $\kk = O(1)$, $a = a_0 f_n$, $b = b_0 f_n$, $a_0 \asymp 1$ and $b_0 \asymp 1$ for some $f_n \to \infty$ as $n \to \infty$. Then, \scone and \scrr are both consistent at a rate $O(f_n^{-1})$.
	\qed 
	
%	\begin{itemize}
%		\item As long as $\kk^3 a/(a-b)$ is sufficiently small, \scrr has average misclassification error $O(\kk^2 a /(a-b)^2)$ with high probability.
%	\end{itemize}
%	Thus, we have the exact same result for \scone and \scrr in the SBPP model.	
	\end{exa}
	\begin{figure}[t]
		\centering
		\includegraphics[scale=0.5]{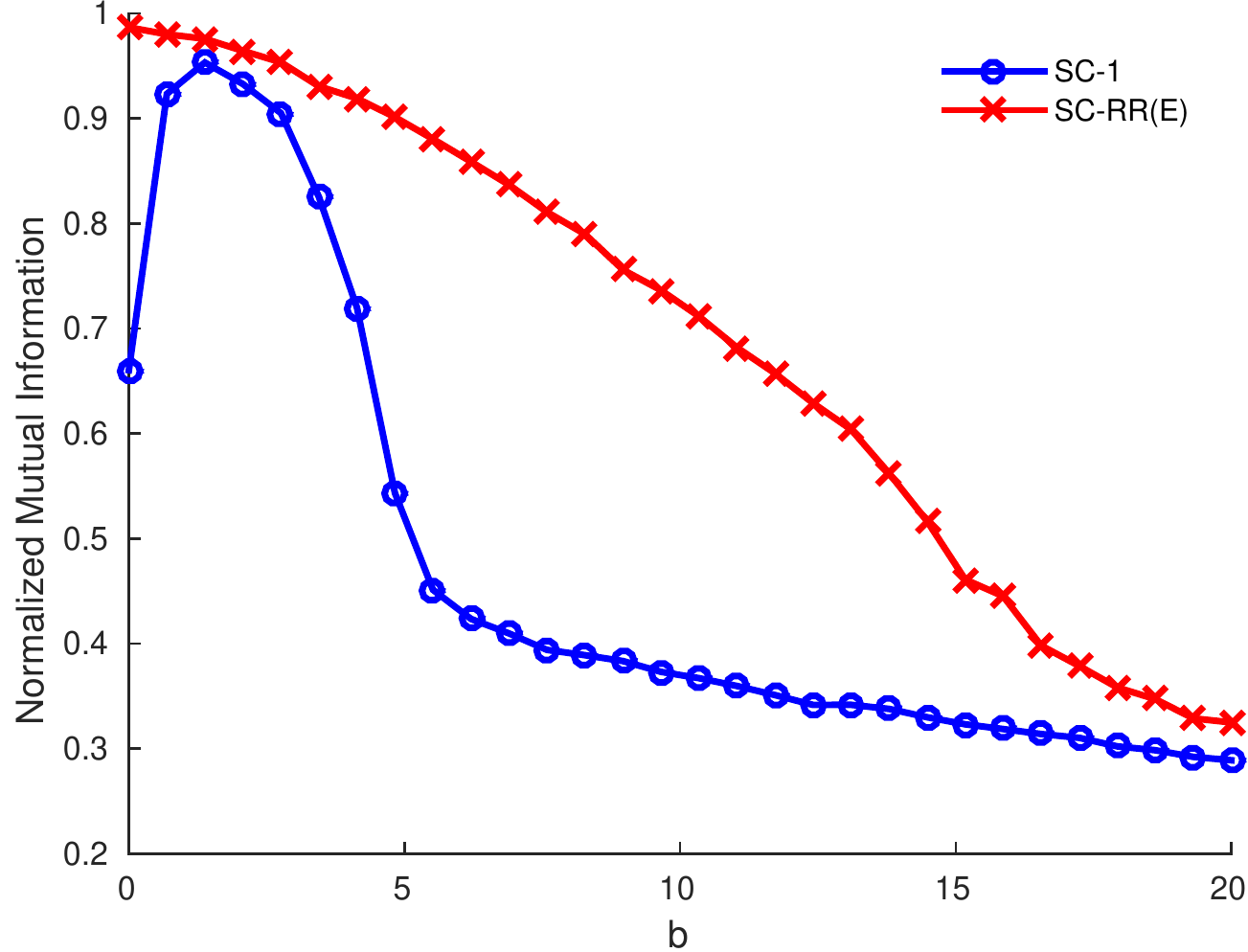}
		\caption{An example of the performance boost of \scrr (or \scerr) relative to \scone. The data is generated from the bipartite version of Example~\ref{exa:diff} with $n_2 = 2 n_1 = 1000$, $\kk_1 = \kk_2 = 4$, $\pi_{r\ell} = n_{r}/\kk_r$ for all $\ell \in [k_r],\,r=1,2$, and $\Psi = 2b E_4 + \diag(16,16,16,2)$ similar to~\eqref{eq:Psi:unequal:diag}. The key is the significant difference in the two smallest diagonal elements of $\Psi$. The plot shows the normalized mutual information (a measure of cluster quality) between the output of the two spectral clustering algorithms and the true clusters, as $b$ varies. Only row clusters are considered. The plot shows a significant improvement for \scrre relative to \scone over a range of $b$. As $b$ increases, the relative difference between $\Psi_{33}$ and $\Psi_{44}$ reduces and the model approaches that of Example~\ref{exa:sbpp}, leading to similar performances for both algorithms as expected. It is interesting to note that the monotone nature of the performance of \scrre as a function of $b$ and the non-monotone nature of that of \scone is reflected in the upper bounds~\eqref{eq:rho:scrr} and~\eqref{eq:rho:scone}.}%. for which we have no explanation. }
		\label{fig:scrr:scone:boost}
	\end{figure}
	Let us now give an example where \scone and \scrr behave differently.
	\begin{exa}\label{exa:diff}%[\aaa{An example where the two behave differently}]
		Consider the symmetric balanced SBM, with
		\begin{align}\label{eq:Psi:unequal:diag}
			\Psi = b \onem_\kk + \diag(\alpha_1,\dots,\alpha_k), \quad \pi_{j} = n_{j}/n = \frac1\kk, \; \forall j \in [\kk].
		\end{align}
		As in Example~\ref{exa:sbpp}, we have dropped the index $r$ determining the side of network in the bipartite case. Let us assume that $\alpha_1 \ge \alpha_2 \ge \dots \ge \alpha_\kk \ge 0$. We have
		\begin{align*}
			\kk \Psinft^2 = \Psinf^2 = \kk^{-1} \min_{s \neq t} \sum_{\ell} (\Psi_{s\ell} - \Psi_{t\ell})^2 = \kk^{-1} (\alpha_{\kk-1}^2 + \alpha_{\kk}^2).
		\end{align*}
		Thus, Theorem~\ref{thm:scrr} gives the following:
		%\begin{itemize}
			%\item 
			With $\rho$ defined as follows:
			\begin{align}\label{eq:rho:scrr}
				\rho := \kk^2 \frac{\alpha_1 + b}{\alpha_{\kk-1}^2 + \alpha_{\kk}^2},
			\end{align}
			as long as $\kk \rho$ is sufficiently small, \scrr has average misclassification error $O(\rho)$ with high probability.
%			As long as $\kk^3 (\alpha_1 + b)/(\alpha_{\kk-1}^2 + \alpha_{\kk}^2)$ is sufficiently small, \scrr has average misclassification error $O(\kk^2 (\alpha_1 + b) /(\alpha_{\kk-1}^2 + \alpha_{\kk}^2))$ with high probability.
		%\end{itemize}
		
		To determine the performance of \scone, we need to estimate $\sigma_\kk$, the smallest singular value of  $\Bb = \Nb^{1/2} \Psi \Nb^{1/2} = \Psi / \kk$. Since $\Psi$ is obtained by a  rank-one perturbation of a diagonal matrix, it is well-known that  when $\{\alpha_t\}$ are distinct, the eigenvalues of $\Psi$ are obtained by solving $\sum_{t=1}^\kk 1/(\alpha_t - \lambda)= -1/b$; the case where some of the $\{\alpha_t\}$ are repeated can be reasoned by the taking the limit of the general case. By plotting $\lambda \mapsto \sum_{t=1}^\kk 1/(\alpha_t - \lambda)$ and looking at the intersection with $\lambda \mapsto -1/b$, one can see that the smallest eigenvalue of $\Psi$, equivalently its smallest singular value,  is in $[\alpha_\kk,\alpha_{\kk-1}]$, and can be made arbitrarily close to $\alpha_k$ by letting $b \to 0$. Letting $\alpha_\kk + \eps_\kk(\alpha;b)$ denote this smallest singular value, we have $0 \le \eps_k(\alpha;b) \to 0$ as $b \to 0$. 
		
		It follows that $\sigma_\kk = \sigma_\kk(\Bb) = \kk^{-1}(\alpha_\kk + \eps_k(\alpha;b))$. Theorem~\ref{thm:scone} gives
		the following: 
		%\begin{itemize}
			%\item 
			With $\rho$ defined as 
			\begin{align}\label{eq:rho:scone}
			\rho := \kk^2 \frac{\alpha_1 + b}{(\alpha_{\kk} + \eps_k(\alpha;b) )^2},
			\end{align}
			as long as $\kk \rho$ is sufficiently small, \scone has average misclassification error $O(\rho)$ with high probability.
		%\end{itemize}
		
		Comparing~\eqref{eq:rho:scone} with~\eqref{eq:rho:scrr}, the ratio of the two bounds is $(\alpha_{\kk-1}^2 + \alpha_{\kk}^2)/(\alpha_{\kk} + \eps_k(\alpha;b) )^2 \to 1 + (\alpha_{\kk-1}/\alpha_\kk)^2$ as $b \to 0$. This ratio could be arbitrarily large depending on the relative sizes of $\alpha_{\kk}$ and $\alpha_{\kk-1}$. Thus, when the bounds give an accurate estimate of the misclassification rates of \scone and \scrr, we observe that \scrr has a clear advantage. This is empirically verified in Figure~\ref{fig:scrr:scone:boost}, for moderately dense cases. (In the very sparse case, the difference is not very much empirically.) In general, we expect \scrr to perform better when there is a large gap between $\sigma_\kk$ and $\sigma_{\kk-1}$, the two smallest nonzero singular values of $\Bb$.
	\end{exa}

	\begin{exa}[Rank-deficient connectivity]
		 Consider an extreme case where $\Psi$ is rank one: $\Psi = u v^T$ for some $u,v \in \reals_+^{k}$, where again for simplicity we have assumed $k_1 = k_2 = k > 1$. Also assume $n_1 \asymp n_2 =n$ and $\pi_{rj} = 1/k$ for $j \in [k]$ and $r=1,2$, i.e., the clusters are balanced. In this case, $\Bb = \Psi/k$ and $\sigma_k = 0$, hence Theorem~\ref{thm:scone} does not provide any guarantees for \scone. However, Theorem~\ref{alg:scrr} is still valid. We have $	\kk \Psinft[1]^2 = \Psinf[1]^2 = \kk^{-1} \norm{v}_2^2 \min_{s \neq t} (u_s - u_t)^2$ and $d \lesssim \infnorm{\Psi} \le \infnorm{u}\infnorm{v}$. It follows from Theorem~\ref{alg:scrr} that \scrr has average misclassification rate bounded as
		 \begin{align*}
		 	O \Big( \frac{k^2 \infnorm{u}\infnorm{v} }{ \norm{v}_2^2 \min_{s \neq t} (u_s - u_t)^2}\Big)
		 \end{align*}
		 whenever $k$ times the above is sufficiently small. This is a consistency result assuming that the coordinates of $u$ are different, all the elements of $u$ and $v$ are growing at the same rate and $k = O(1)$. 
	\end{exa}
\subsection{Efficient reduced-rank SC}
 The \scrr algorithm discussed above has the disadvantage of running a \kmeans algorithm on vectors in $\reals^n$ (the rows of $\Arek$, or in the ideal case the rows of $P$). We now introduce a variant of this algorithm that has the same performance as \scrr in terms of misclassification rate, while computationally is as efficient as \scone. This approach which we call efficient reduced-rank spectral clustering, \scerr, is detailed in Algorithm~\ref{alg:scerr}. The efficiency comes from running the \kmeans step on vectors in $\reals^\kk$ which is usually a much smaller space than $\reals^n$ ($\kk \ll n$ in applications).

\begin{algorithm}[t]\label{alg:scerr}
	\caption{\scerr}
	\setstretch{1.2}
	\begin{algorithmic}[1]
		\medskip
		%\Procedure{CH\textendash Election}{}
		%\State Apply degree-reduction regularization of Theorem~\ref{thm:concent:nonsym} to $A$ to obtain $\Are$.
		\State Apply degree regularization Algorithm~\ref{alg:deg:reg} to $A$ to obtain $\Are$.
		\State Obtain $\Arek = \Zh_1 \Sigh \Zh_2^T$, the $\kk$-truncated SVD of $\Are$.
		\State Output $\kalg(\Zh_1 \Sigh)$ where $\kalg$ is an isometry-invariant $\kappa$-approximate \kmeans algorithm.
	\end{algorithmic}
	\label{alg:scerr}
\end{algorithm}

For the \kmeans step in \scerr, we need a \kmeans (type) algorithm $\kalg$ that only uses the pairwise distances between the data points. We call such \kmeans algorithms \emph{isometry-invariant}:
\begin{defn}
	A \kmeans (type) algorithm $\kalg$ is isometry-invariant if for any two  matrices $X^{(r)} \in \rmat(n,d_r), r =1,2$, with the same pairwise distances among points|i.e., $\dr(x^{(1)}_i, x^{(1)}_j) = \dr(x^{(2)}_i, x^{(2)}_j)$ for all distinct $i,j \in [n]$, where $(x^{(r)}_i)^T$ is the $i$th row of $X^{(r)}$|one has
	\begin{align*}
		\Misb(\kalg(X^{(1)}), \kalg(X^{(2)})) = 0.
	\end{align*}
\end{defn} 
Although the rows of $\kalg(X^{(1)})$ and $\kalg(X^{(2)})$ lie in spaces of possibly different dimensions, it still makes sense to talk about their relative misclassification rate, since this quantity only depends on the membership information of the \kmeans matrices and not their center information. We have implicitly assumed that $\dr(\cdot,\cdot)$ defines a family of distances over all Euclidean spaces $\reals^{d}, d=1,2,\dots$. This is obviously true for the common choice $\dr(x,y) = \norm{x-y}_2$.  If algorithm $\kalg$ is randomized, we assume that the same source of randomness is used (e.g., the same random initialization) when applying to either of  the two cases $X^{(1)}$ and $X^{(2)}$. 

The following result guarantees that \scerr behaves the same as \scrr when one uses an isometry-invariant approximate \kmeans algorithm in the final step. 

\begin{thm}\label{thm:scerr}
	Consider the spectral algorithm \scerr given in Algorithm~\ref{alg:scerr}. %, where $\kalg$ is isometriy-invariant $\kappa$-approximate \kmeans algorithm. 
	Assume that for a sufficiently small $C_1 > 0$, \eqref{eq:scrr:asump} holds.
	Then, under the SBM model of Section~\ref{sec:SBM}, w.h.p.,
	\begin{align*}
	\Misb(\kalg(\Zh_1 \Sigh),P) \; \le  \;C_1^{-1}\, (1+\kappa)^2 \Big(\frac{ k \dg}{\Psinf[1]^2}\Big).
	\end{align*}
\end{thm}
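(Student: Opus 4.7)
The plan is to reduce the analysis of \scerr to that of \scrr (Theorem~\ref{thm:scrr}) by leveraging the isometry invariance of $\kalg$. The whole argument rests on a simple geometric observation: the rows of $\Arek = \Zh_1 \Sigh \Zh_2^T$ are obtained from those of $\Zh_1 \Sigh$ by right-multiplication by $\Zh_2^T$, and since $\Zh_2 \in \ort{n_2}{\kk}$, property~\eqref{eq:ort:isom:1} ensures that this mapping preserves row-wise $\ell_2$ distances exactly.

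First I would record the pairwise-distance identity. Let $u_i^T \in \reals^\kk$ denote the $i$th row of $\Zh_1 \Sigh$, so that the $i$th row of $\Arek$ equals $(\Zh_2 u_i)^T \in \reals^{n_2}$. For any $i \neq j$, applying~\eqref{eq:ort:isom:1} to $u_i - u_j \in \reals^\kk$ gives
\[
\norm{\Zh_2 u_i - \Zh_2 u_j}_2 \;=\; \norm{\Zh_2 (u_i - u_j)}_2 \;=\; \norm{u_i - u_j}_2.
\]
Hence $\Arek$ and $\Zh_1 \Sigh$ have identical pairwise row distances. Invoking the isometry-invariance hypothesis on $\kalg$ then yields $\Misb(\kalg(\Arek), \kalg(\Zh_1 \Sigh)) = 0$, so the two outputs carry the same cluster labels up to a permutation.

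Next, because $\kalg$ is a $\kappa$-approximate \kmeans algorithm, $\kalg(\Arek) \in \Pc_\kappa(\Arek)$, so Theorem~\ref{thm:scrr}---whose hypothesis is exactly~\eqref{eq:scrr:asump}---applies verbatim to give, with high probability,
\[
\Misb(\kalg(\Arek),\, P) \;\le\; C_1^{-1}(1+\kappa)^2 \Big(\frac{k \dg}{\Psinf[1]^2}\Big).
\]
Combining the two displays via the triangle inequality for $\Misb$---which holds because, after fixing an optimal label permutation, $\Misb$ reduces to a Hamming-type distance on label vectors---yields the desired bound for $\kalg(\Zh_1 \Sigh)$.

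The argument has essentially no hard step; it is a pure reduction to Theorem~\ref{thm:scrr}, with the dimension drop from $n_2$ to $\kk$ accounted for entirely by the isometry in~\eqref{eq:ort:isom:1}. The only slightly delicate point is ensuring the isometry-invariance hypothesis is applied legitimately when $\kalg$ is randomized, which is handled by coupling both runs of $\kalg$ to the same random seed, as required by the definition of isometry invariance. No new concentration inequality, spectral perturbation bound, or \kmeans analysis beyond what is already used for \scrr (in particular, Lemma~\ref{lem:RR:dev}) is required.
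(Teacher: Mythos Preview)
Your proposal is correct and follows essentially the same approach as the paper: both exploit~\eqref{eq:ort:isom:1} to show that the rows of $\Arek = \Zh_1 \Sigh \Zh_2^T$ and $\Zh_1 \Sigh$ have identical pairwise distances, invoke isometry-invariance to conclude $\Misb(\kalg(\Zh_1 \Sigh), \kalg(\Arek)) = 0$, and then apply the triangle inequality for $\Misb$ together with Theorem~\ref{thm:scrr}. Your added remark about coupling random seeds for randomized $\kalg$ is a helpful clarification that the paper only records in the definition.
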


\begin{rem}[Comparison with existing results]\label{rem:comparison:consistent}
	%\aaa{Need to fill this in ... Need to say that Theorem 4 is similar to Lei and Rinaldo but we remove the $\log n$ lower bound on the degrees. Theorem 5 and 6 are new results. Both of these theorems also apply without any lower bound on $d$. The special case of Theorem 4, i.e., Corollary 3, appears in GMZZ15 but it perhaps requires oracle regularization as opposed to us where no oracle knowledge is assumed (?) Can we also compare others, e.g.  Chin et. al. and YP14a, etc.}\\
	The existing results hold under different assumptions. Table~\ref{tab:compare} provides a summary of some the recent results. The ``$p$ vs. $q$'' denotes the assortative case where the diagonal entries of $B$ are above $p$ and off-diagonal entries are below $q$.  The ``e.v. dep'' denotes whether the consistency result depends on the $k$th eigenvalue or singular value of $B$ (or $\Psi$). The ``\kmeans'' column records the dimension of the matrix on which a \kmeans algorithm is applied.
	%We will compare previous results with ours. The results hold with different assumptions.
	 To allow for better comparison, let us consider a typical (special) case of the setting in this paper, where $n_1 = n_2$, $k_1=k_2$, $\Psi$ is symmetric and $\dg\asymp \Psinf[1]$. Then all the spectral methods in~Table~\ref{tab:compare} have misclassification rate guarantees that are polynomial in $d^{-1}$. General SBMs without assortative assumption (e.g., ``$p$ vs. $q$")  were considered in earlier literature \cite{rohe2011spectral, lei2015consistency}. However, the theoretical guarantees were provided for sufficiently dense networks. The generalization to sparse networks is considered in \cite{yun2014accurate, chin2015stochastic, gao2017achieving, gao2018community} using some regularization on the adjacency matrix. However, these results only apply to assortative networks and with extra assumptions. Overall, the algorithms \scone and \scerr require less assumptions than any existing works. We also note that the guarantees of Theorems~\ref{thm:scrr} and~\ref{thm:scerr} in the context of a general SBM are new and have not appeared before (not even in the dense case).
	\begin{table}[t]
		\caption{Comparison of consistency results (cf. Remark~\ref{rem:comparison:consistent}).}
		\begin{tabular}{c|c|c|c|c|c|c}
			%\hline
			& minimum degree &  $p$ vs. $q$ & $p=O(q)$& e.v. dep &  given $\Psi$ & $k$-means \\ \hline
			\cite{rohe2011spectral} &$\Omega(n/\sqrt{\log n}).$& No. & Not needed. & Yes. &  No. & $n\times k$. \\ \hline
			\cite{yun2014accurate} & Not needed. & Yes & Not needed. & No. & No. &$n\times n.$ \\ \hline
			\cite{lei2015consistency} & $\Omega(\log n).$ & No. & Not needed. & Yes. &  No. & $n\times k$. \\ \hline
			\cite{chin2015stochastic} & Not needed. & Yes. & Required. & Yes. &  Yes. & $n\times k$.  \\ \hline
			\cite{gao2017achieving} & Not needed. & Yes. & Required. & Yes. &  No. & $n\times k$.\\ \hline
			\cite{gao2018community} & Not needed. & Yes. & Required. & No. &  No. & $n\times n$.\\ \hline
			SC-1 & Not needed. & No. & Not needed. & Yes. &  No. & $n\times k$.\\ \hline
			SC-RRE & Not needed. & No. & Not needed. & No. &  No. & $n\times k$.\\ \hline
		\end{tabular}
		\label{tab:compare}
	\end{table}

\end{rem}

\subsection{Results in terms of mean parameters}\label{sec:res:mean:param}
One useful aspect of \scrre is that one can state its corresponding consistency result in terms of the \emph{mean parameters} of the block model.  Such results are useful when comparing to the optimal rates achievable in recovering the clusters. The row mean parameters of the SBM in Section~\ref{sec:SBM} are defined as $\Lambda_{s\ell} := B_{s\ell} \;n_{2\ell}$ for $(s,\ell) \in [\kk_1] \times [\kk_2]$ which we collect in a matrix $\Lambda = (\Lambda_{s\ell}) \in \reals^{\kk_1 \times \kk_2}$. To get an intuition for $\Lambda$ note that 
\begin{align*}
	\ex[A Z_2] = PZ_2 = Z_1 B N_2 = Z_1 \Lambda.
\end{align*}
Each row of $A Z_2$ is obtained by summing the corresponding row of $A$ over each of the column clusters to get a $\kk_2$ vector. In other words, the rows of $A Z_2$ are the sufficient statistics for estimating the row clusters, had we known the true column clusters. Note that $\ex [(A Z_2)_{i*}] = z_{1i}^T \Lambda$, where the notation $(\cdot)_{i*}$ denotes the $i$th row of a matrix. In other words, we have $\ex [(A Z_2)_{i*}] = \Lambda_{s*}$ if node $i$ belongs to row cluster $s$. Let us define the minimum separation among these row mean parameters:
\begin{align}\label{eq:Laminf:def}
	\Laminf^2 :=  \min_{t \neq s} \norm{\Lambda_{s*} - \Lambda_{t*}}^2.
\end{align}
We have the following corollary of Theorem~\ref{thm:scrr} which is proved in Appendix~\ref{sec:proof:consist:res}.

\begin{cor}\label{cor:mean:param}
	Assume that $\pinf{r} := \min_{t} \pi_{rt} \ge (\beta_r \kk_r)^{-1}$ for $r=1,2$, and let $\kk = \min\{\kk_1,\kk_2\}$ and $\alpha = n_2/n_1$.
	Consider the spectral algorithm \scrr given in Algorithm~\ref{alg:scrr}. Assume that for a sufficiently small $C_1 > 0$,
	\begin{align}\label{eq:scrr:asump:Lambda}
		\beta_1 \beta_2 \,\kk\, \kk_1 \kk_2 \,\alpha \frac{\infnorm{\Lambda}}{\Laminf^2} \le C_1 (1+\kappa)^{-2}.
	\end{align}
	Then, under the SBM model of Section~\ref{sec:SBM}, w.h.p.,
	\begin{align*}
	\Misb(\Pc_\kappa(\Arek),P) \; \le  \;C_1^{-1}\, (1+\kappa)^2
	\beta_2 \,\kk \,\kk_2 \, \alpha \frac{\infnorm{\Lambda}} {\Laminf^{2}} .
	\end{align*}
	
\end{cor}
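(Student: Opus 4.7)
The plan is to deduce Corollary~\ref{cor:mean:param} as a direct consequence of Theorem~\ref{thm:scrr}, essentially by translating the quantities $\infnorm{\Psi}$, $\Psinf[1]^2$, $\Psinft[1]^2$, and $d = \sqrt{n_2/n_1}\,\infnorm{\Psi}$ into the mean-parameter quantities $\infnorm{\Lambda}$ and $\Laminf^2$ via the identity $\Lambda_{s\ell} = B_{s\ell}n_{2\ell}$. The first step is simply to rewrite this identity using $B = \Psi/\sqrt{n_1 n_2}$: one gets $\Lambda_{s\ell} = \sqrt{\alpha}\,\pi_{2\ell}\,\Psi_{s\ell}$ with $\alpha = n_2/n_1$, so that $\Psi_{s\ell} = \Lambda_{s\ell}/(\sqrt{\alpha}\,\pi_{2\ell})$ and $\Lambda_{s\ell}-\Lambda_{t\ell} = \sqrt{\alpha}\,\pi_{2\ell}(\Psi_{s\ell}-\Psi_{t\ell})$. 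This single algebraic step is the only technical content; everything else is plugging in inequalities.

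Next, I would establish three clean bounds. Using $\pi_{2\ell} \le 1$ inside the squared difference,
\begin{align*}
\norm{\Lambda_{s*}-\Lambda_{t*}}^2 \;=\; \alpha \sum_{\ell} \pi_{2\ell}^2 (\Psi_{s\ell}-\Psi_{t\ell})^2 \;\le\; \alpha \sum_{\ell} \pi_{2\ell}(\Psi_{s\ell}-\Psi_{t\ell})^2,
\end{align*}
so minimizing over $s\neq t$ yields $\Laminf^2 \le \alpha\,\Psinf[1]^2$, i.e.\ $1/\Psinf[1]^2 \le \alpha/\Laminf^2$. In the opposite direction, the lower bound $\pi_{2\ell} \ge 1/(\beta_2 k_2)$ gives $\infnorm{\Psi} \le \beta_2 k_2 \,\infnorm{\Lambda}/\sqrt{\alpha}$, hence $d \le \beta_2 k_2\,\infnorm{\Lambda}$. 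Finally, from $\Psinft[1]^2 \ge \pinf{1}\,\Psinf[1]^2 \ge (\beta_1 k_1)^{-1}\,\Psinf[1]^2$ together with the first bound,
\begin{align*}
\frac{1}{\Psinft[1]^2} \;\le\; \frac{\beta_1 k_1 \,\alpha}{\Laminf^2}.
\end{align*}

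Putting the pieces together, the hypothesis~\eqref{eq:scrr:asump} of Theorem~\ref{thm:scrr} reduces to
\begin{align*}
\frac{k\,d}{\Psinft[1]^2} \;\le\; \beta_1\beta_2\, k\,k_1 k_2\,\alpha\,\frac{\infnorm{\Lambda}}{\Laminf^2},
\end{align*}
which is at most $C_1(1+\kappa)^{-2}$ by assumption~\eqref{eq:scrr:asump:Lambda}. Thus Theorem~\ref{thm:scrr} applies, and its conclusion gives
\begin{align*}
\Misb(\Pc_\kappa(\Arek),P) \;\le\; C_1^{-1}(1+\kappa)^2\,\frac{k\,d}{\Psinf[1]^2} \;\le\; C_1^{-1}(1+\kappa)^2\,\beta_2\, k\, k_2\, \alpha\,\frac{\infnorm{\Lambda}}{\Laminf^2},
\end{align*}
which is exactly the stated bound. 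The only ``obstacle'' worth flagging is keeping the directions of inequality straight when passing between $\pi_{2\ell}^2$ and $\pi_{2\ell}$: for $\Psinf[1]^2$ (which we want to lower bound) we use $\pi_{2\ell} \le 1$, whereas for $\infnorm{\Psi}$ (which we want to upper bound) we use $\pi_{2\ell} \ge (\beta_2 k_2)^{-1}$; these are precisely the two tight directions, which is why the factors $\alpha$ and $\beta_2 k_2$ both appear in the final bound while $\beta_1 k_1$ only shows up in the hypothesis (via $\pinf{1}$ in $\Psinft[1]^2$).
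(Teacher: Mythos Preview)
Your proof is correct and follows essentially the same route as the paper's own proof: rewrite $\Lambda_{s\ell}=\sqrt{\alpha}\,\pi_{2\ell}\,\Psi_{s\ell}$, use $\pi_{2\ell}\le 1$ to get $\Laminf^2\le\alpha\,\Psinf[1]^2$, use $\pi_{2\ell}\ge(\beta_2 k_2)^{-1}$ to get $d\le\beta_2 k_2\,\infnorm{\Lambda}$, use $\pinf{1}\ge(\beta_1 k_1)^{-1}$ to handle $\Psinft[1]^2$, and then invoke Theorem~\ref{thm:scrr}. The organization and all intermediate bounds match the paper's argument.
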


We note that the exact same result as Corollary~\ref{cor:mean:param} holds for \scerr assuming the \kmeans step uses an isometry invariant algorithm as discussed in Section~\ref{sec:consist:res}. We refer to~\cite{pl-bipartite} for an application of this result in constructing optimal clusterings in the bipartite setting.

\section{Extensions}\label{sec:exten}

\subsection{Clusters on one side only}\label{sec:one:cluster}
The bipartite setting allows for the case where only one side has clusters. Assume that $A \sim \text{Ber}(P)$ in the sense of~\eqref{eq:matrix:ber:defn} and, for example, only side~2 has $k_2$ clusters. Then we can model the problem as $P$ having $k_2$ distinct columns. However, within columns we do not require any block constant structure, i.e., the $k_2$ distinct columns of $P$ are general vectors in $[0,1]^{n_1}$. This problem can be considered a special case of the SBM model discussed in Section~\ref{sec:SBM} where $k_1 = n_1$: We recall that $P = \Zb_1 \Bb \Zb_2^T$ where $\Zb_1$ is an orthogonal matrix of dimension $n_1 \times k_1$, hence $\Zb_1 = I_{n_1}$. All the consistency results of the paper thus hold, where we set $k_1 = n_1$. %(Note that in this case, we will often have $\min\{k_1,k_2\} = k_2$.)
%Of course, the conditions of the theorems will implicitly impose $n_1 = k_1 \ll n_2$ for consistency. 

%This problem is equivalent to a spectral clustering if we consider the following setting. Let $k_2 = n_2$, and $\Zb_2 = I_{n_2}$. We can similarly define \eqref{eq:Psinf:defs} and \eqref{eq:Psinft:defs}, then Theorem \ref{thm:scrr} still holds. \zz{It looks like a special case rather than an extension.}

\subsection{More clusters than rank}\label{sec:more:clust:than:rank}
One of the unique features of the bipartite setting relative to the symmetric one is the possibility of having more clusters on one side of the network than the rank of the connectivity matrix. In the notation established so far, this is equivalent to $\kk_1 > \kk = \min\{\kk_1,\kk_2\}$. 
%\zz{This just means $k_2<k_1$. Do we need a new notation for rank?}. 
Let us first examine the performance of \scone. Recall the SVD of $\Bb = \Usi \Sigma \Vsi^T$, as given in Lemma~\ref{lem:P:svd}. In contrast to the case $\kk_1 = \kk$, where the singular vector matrix $\Usi$ has no effect on the results (cf. Theorem~\ref{thm:scone}), in the case $\kk_1 > \kk$, these singular vectors play a role. Recall that $\Usi$ is a $\kk_1 \times \kk$ orthogonal matrix, i.e., $\Usi \in \ort{\kk_1}{\kk}$. For a matrix $\kk_1 \times \kk_1$ matrix $M$, and index set $\Ic \subset [\kk_1]$, let $M_{\Ic}$ be the principal sub-matrix of $M$ on indices $\Ic \times \Ic$.
 We assume the following \emph{incoherence} condition:
\begin{align}\label{eq:U:incoh}
	%\infnorm{\Usi \Usi^T - I_{\kk_1}} := \max_{(s,t)} | \ip{\usi_s,\usi_t} - 1\{s \neq t\}| \;\le\; \rho_1,
	\max_{\Ic \,\subset\,[\kk_1]:\; |\Ic|=2}\opnorm{(\Usi \Usi^T - I_{\kk_1})_{\Ic}}  \;\le\; 1-\rho_1,
\end{align}
for some $\rho_1 \in (0,1]$. 
%In other words, condition~\eqref{eq:U:incoh} only controls the operator norm of the $2 \times 2$ principal sub-matrices of $\Usi \Usi^T - I_{\kk_1}$.
%
Letting  $\usi_{s}^T$ be the $s$th row of $\Usi$, for $s \in [\kk_1]$, we note that $(\Usi \Usi^T)_{st} =  \ip{\usi_s,\usi_t}$, that is, $\Usi \Usi^T$ is the Gram matrix of the vectors $\usi_s,\, s \in [\kk_1]$.
%where $\infnorm{\cdot}$ is the elementwise sup-norm.
 We have the following extension of Theorem~\ref{thm:scone}:

\begin{thm}\label{thm:scone:unequal}
	Consider the spectral algorithm \scone given in Algorithm~\ref{alg:scone}. Assume that for a sufficiently small $C > 0$,
	\begin{align*}
	\kk \dg \, \sigma_\kk^{-2} \le C(1+\kappa)^{-2} \rho_1.
	\end{align*}
	Then, under the SBM model of Section~\ref{sec:SBM}, w.h.p.,
	\begin{align}\label{eq:scone:unequal}
	\Misb(\Pc_\kappa(\Zh_1),\Zb_1) \; \lesssim \; (1+\kappa)^2 %\frac{\beta_1}{\rho_1} \,\frac{\kk}{\kk_1} \,\frac{a}{  \sigma_\kk^2} .
	\beta_1 \,\Big(\frac{\kk}{ \rho_1 \kk_1} \Big) \Big(\frac{\dg}{  \sigma_\kk^2}\Big) .
	\end{align}
\end{thm}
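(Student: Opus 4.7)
}
The plan is to follow the three-step blueprint of Theorem~\ref{thm:sc:prototype}, the only genuinely new ingredient being the lower bound on the minimum center separation when $\kk_1 > \kk$, where the right-multiplication by $\Usi$ is no longer an isometry. First, by Theorem~\ref{thm:concent:data:driven} the regularized matrix $\Are$ satisfies the concentration bound~\eqref{eq:gen:concent} with high probability, and Lemma~\ref{lem:DK:Z:dev} then yields some $Q\in\ort{\kk}{\kk}$ with
\begin{align*}
	\fnorm{\Zh_1 - \Zb_1 \Usi Q}^2 \;\le\; \eps^2, \qquad \eps^2 := C_2^2\,\frac{\kk\,\dg}{\sigma_\kk^2}.
\end{align*}
Set $\Xs := \Zb_1 \Usi Q \in \rmat(n_1,\kk)$ and observe that it is a \kmeans matrix in $\kmm(n_1,\kk,\kk_1)$, since row $i$ of $\Zb_1$ equals $n_{1s}^{-1/2}e_s^T$ when node $i$ is in cluster $s$, so the centers of $\Xs$ are
\begin{align*}
	q_s = n_{1s}^{-1/2}\, Q^T \usi_s, \qquad s \in [\kk_1],
\end{align*}
where $\usi_s^T$ denotes the $s$th row of $\Usi$. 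In particular, the membership information encoded by $\Xs$ coincides with that of $\Zb_1$, so $\Misb(\Pc_\kappa(\Zh_1),\Zb_1) = \Misb(\Pc_\kappa(\Zh_1),\Xs)$.

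The main step is to lower-bound $\delinf^2(\Xs)$ using the incoherence hypothesis~\eqref{eq:U:incoh}. For $s\neq t$, set $\Ic = \{s,t\}$ and $v = (n_{1s}^{-1/2},\,-n_{1t}^{-1/2})^T \in \reals^2$. Since $Q$ is a square orthogonal matrix and $(\Usi\Usi^T)_{s't'} = \ip{\usi_{s'},\usi_{t'}}$, a direct expansion gives
\begin{align*}
	\norm{q_s - q_t}_2^2 \;=\; \norm{n_{1s}^{-1/2}\usi_s - n_{1t}^{-1/2}\usi_t}_2^2 \;=\; v^T (\Usi\Usi^T)_{\Ic}\, v.
\end{align*}
The matrix $G_\Ic := (\Usi\Usi^T)_{\Ic}$ is PSD (principal submatrix of the orthogonal projection $\Usi\Usi^T$) and~\eqref{eq:U:incoh} gives $\opnorm{G_\Ic - I_2} \le 1-\rho_1$, whence $\lambda_{\min}(G_\Ic) \ge \rho_1$. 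Consequently
\begin{align*}
	\norm{q_s - q_t}_2^2 \;\ge\; \rho_1\, \norm{v}_2^2 \;=\; \rho_1\,(n_{1s}^{-1} + n_{1t}^{-1}),
\end{align*}
and so $n_1\,\delinf^2(\Xs) \ge \rho_1 \min_{s\neq t}(\pi_{1s}^{-1}+\pi_{1t}^{-1}) \ge 2\rho_1\kk_1/\beta_1$ by~\eqref{asu:clust:prop}.

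With this separation in hand, the third step is routine. The hypothesis $\kk\dg/\sigma_\kk^2 \le C(1+\kappa)^{-2}\rho_1$ ensures the smallness condition $(1+\kappa)^2\eps^2 / (n_1 \delinf^2) \ll 1$ required in Corollary~\ref{cor:kmeans:misclass}, so applying the corollary with $c_r = 1/2$ gives
\begin{align*}
	\Misb(\Pc_\kappa(\Zh_1), \Xs) \;\lesssim\; (1+\kappa)^2\,\frac{\eps^2}{n_1 \delinf^2} \;\lesssim\; (1+\kappa)^2\,\beta_1\,\frac{\kk}{\rho_1\kk_1}\cdot\frac{\dg}{\sigma_\kk^2},
\end{align*}
which is~\eqref{eq:scone:unequal}. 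The only step beyond the $\kk_1=\kk$ case is the PSD lower bound on the $2\times 2$ principal submatrix $G_\Ic$; this is where the incoherence hypothesis enters and where the factor $\rho_1^{-1}$ in the bound originates. I expect this to be the one subtle point of the argument, everything else being a repackaging of the proof of Theorem~\ref{thm:sc:prototype}.
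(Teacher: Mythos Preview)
Your proof is correct and follows essentially the same route as the paper's: concentration via Theorem~\ref{thm:concent:data:driven}, Davis--Kahan via Lemma~\ref{lem:DK:Z:dev}, then the $2\times 2$ principal submatrix argument to lower-bound the center separation, and finally Corollary~\ref{cor:kmeans:misclass}. The only (minor) imprecision is in the last step: condition~(b) of Corollary~\ref{cor:kmeans:misclass} is a \emph{per-cluster} requirement $4(1+\kappa)^2\eps^2/(n_{1t}\,\delta_t^2) < 1$, not a condition on $(1+\kappa)^2\eps^2/(n_1\,\delinf^2)$. The paper handles this by observing that your pairwise bound $\norm{q_s-q_t}_2^2 \ge \rho_1(n_{1s}^{-1}+n_{1t}^{-1})$ immediately gives $n_{1t}\,\delta_t^2 \ge \rho_1\min_{s\neq t}(1+n_{1t}/n_{1s}) \ge \rho_1$, after which the hypothesis $\kk\dg\,\sigma_\kk^{-2} \le C(1+\kappa)^{-2}\rho_1$ yields $4C_2^2(1+\kappa)^2\kk\dg/(\rho_1\sigma_\kk^2) < 1$ for $C$ small enough.
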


The theorem is proven in Appendix~\ref{sec:proof:thm:scone:unequal}. The factor $\kk/(\kk_1 \rho_1) = \kk_2 /(\kk_1 \rho_1)$ in~\eqref{eq:scone:unequal} is the price one pays for the asymmetry of the number of communities, when applying \scone. (Recall that $\kk := \min\{\kk_1,\kk_2\} = \kk_2$ by assumption.) Note that increasing $\kk_1$ decreases $\kk/\kk_1$, and at the same time, often increases $\rho_1$ since it is harder to have many nearly orthogonal unit vectors in low dimensions. 

\begin{rem}\label{rem:scrr:more:than:rank}
	It is interesting to note that in contrast to \scone, the consistency results for~\scrre do not need any modification for the case where the number of clusters is larger than the rank. In other words, the same Theorems~\ref{thm:scrr} and~\ref{thm:scerr} hold regardless of whether $\kk_1 = \kk$ or $\kk_1 > \kk$, though the difficulty of the latter case will be reflected implicitly via a reduction in $\Psinf[1]^2$ and $\Psinft[1]^2$.
\end{rem}
%\aaa{The result for \scrr remains the same without modification.} 

\subsection{General sub-Gaussian case}\label{sec:gen:subg}
The analysis presented so far for network clustering problems can be extended to general sub-Gaussian similarity matrices. Consider a random matrix $A$ with block constant mean
\begin{align}
P := \ex[A] = Z_1 B Z_2^T, %\quad B = \frac{\Psi}{\sqrt{n_1 n_2}}
\end{align}
as defined in \eqref{eq:nonsym:mean:def}, and where $Z_r, r = 1,2$ are again membership matrices. However, here $A$ is not necessarily an adjacency matrix. We assume  that $A_{ij}$ are sub-Gaussian random variables independent across $(i,j)\in[n_1]\times[n_2]$ and let  $\sigma := \max_{i,j} \|A_{ij}-\ex A_{ij}\|_{\psi_2}$.
We recall that a univariate random variable $X$ is called sub-Gaussian if its sub-Gaussian norm is finite~\cite{vershynin2018high}:
%\begin{defn}[Sub-Gaussian random variable]
%	A (univariate) random variable $X$ that satisfies
	\begin{align}
	\|X\|_{\psi_2} := \inf\{ t>0: \ex \exp(X^2/t^2)\le 2\} < \infty
	\end{align}
%	is called a \emph{sub-Gaussian random variable} and $\| \cdot \|_{\psi_2}$ the \emph{sub-Gaussian norm}. Here,  $\inf \emptyset :=\infty$.
%\end{defn}
%
Note that we do not assume $A_{ij}$ and $A_{i'j'}$ to have the same distribution or the same sub-Gaussian norm even if $Z_{1i} = Z_{1i'}$ and $Z_{2j} = Z_{2j'}$. 

\begin{algorithm}[t]
	\caption{\scerr for sub-Gaussian noise}
	\setstretch{1.2}
	\begin{algorithmic}[1]
		\medskip
		\State Obtain $A^{(k)} = \Zh_1 \Sigh \Zh_2^T$, the $\kk$-truncated SVD of $A$.
		\State Output $\kalg(\Zh_1 \Sigh)$ where $\kalg$ is an isometry-invariant $\kappa$-approximate \kmeans algorithm.
	\end{algorithmic}
	\label{alg:scsg}
\end{algorithm}

Adapting Algorithm~\ref{alg:scerr} to the general sub-Gaussian case, we have Algorithm~\ref{alg:scsg} with the following performance guarantee: 

\begin{thm}\label{thm:scerr:subg:noise}
	Let $\Binf[1]$ and $\Binft[1]$ be defined as in~\eqref{eq:Psinf:defs} with $\Psi$ replaced with $B$ and let 
%	\begin{align*}
%		\frac1{\bar n} := \frac{1}{n_1} + \frac1{n_2}.
%	\end{align*} 
	\[\bar n:= (n_1^{-1} + n_2^{-1})^{-1}.\]
	Consider the spectral algorithm for sub-Gaussian noise given in Algorithm~\ref{alg:scsg}. Assume that for a sufficiently small $C_1 > 0$, 
	\begin{align}
	\frac{\sigma^2}{\bar n} k \,\Binft[1]^{-2} \le C_1 (1+\kappa)^{-2}.
	\end{align}
	Then, under the model defined in this section, 
	\begin{align*}
	\Misb(\kalg(\Zh_1 \Sigh),P) \; \le  \;C_1^{-1}\, (1+\kappa)^2 \Big(\frac{\sigma^2}{\bar n}\frac{k}{ \Binf[1]^2}\Big).
	\end{align*}
	with probability at least $1-2e^{-(n_1+n_2)}$. %Note that the definition of $\Psinft[1]$ and $\Psinf[1]$ remains the same as in \eqref{eq:Psinf:defs} and \eqref{eq:Psinft:defs}.
\end{thm}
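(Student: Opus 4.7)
The plan is to follow the three-step template of the proof of Theorem~\ref{thm:scerr}, substituting the Bernoulli matrix concentration by the corresponding sub-Gaussian operator-norm bound. No regularization step is needed here: for a matrix $A-P$ with independent mean-zero sub-Gaussian entries of $\psi_2$-norm at most $\sigma$, a standard $\eps$-net argument on the unit sphere gives $\opnorm{A-P}\le C\sigma(\sqrt{n_1}+\sqrt{n_2}+t)$ with probability at least $1-2e^{-t^2}$; taking $t=\sqrt{n_1+n_2}$ yields $\opnorm{A-P}\lesssim\sigma\sqrt{n_1+n_2}$ on the event of probability $1-2e^{-(n_1+n_2)}$ stated in the theorem. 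This plays the role that~\eqref{eq:gen:concent} did in the Bernoulli case, with the proxy $d$ replaced by $\sigma^2(n_1+n_2)$.

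The second step mimics Lemma~\ref{lem:RR:dev}: both $\Arek$ and $P$ have rank at most $k=\min\{k_1,k_2\}$, so $\Arek-P$ has rank at most $2k$; combined with the Mirsky--Eckart--Young bound $\opnorm{\Arek-A}=\sigma_{k+1}(A)\le\opnorm{A-P}$ (since $P$ has rank at most $k$), this gives $\fnorm{\Arek-P}\le\sqrt{2k}\,\opnorm{\Arek-P}\le 2\sqrt{2k}\,\opnorm{A-P}\lesssim\sigma\sqrt{k(n_1+n_2)}$. The third step uses the isometry-invariance of $\kalg$: since $\Zh_2^T\Zh_2=I_k$, right-multiplication by $\Zh_2^T$ preserves pairwise row distances, so $\Zh_1\Sigh$ and $\Arek=\Zh_1\Sigh\Zh_2^T$ yield the same cluster labels under $\kalg$, reducing the task to bounding $\Misb(\kalg(\Arek),P)$.

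Finally, view $P$ as a \kmeans matrix in $\kmm(n_1,n_2,k_1)$ whose $s$-th row center is $Z_2 B_{s*}^T\in\reals^{n_2}$, with squared pairwise center-distance $\sum_\ell n_{2\ell}(B_{s\ell}-B_{t\ell})^2 = n_2\sum_\ell\pi_{2\ell}(B_{s\ell}-B_{t\ell})^2$. Hence $\delinf^2(P)=n_2\Binf[1]^2$, while $\min_r\delta_r^2(P)\,n_{1r}=n_1 n_2\Binft[1]^2$ (the inner sum is symmetric in $(s,t)$, so a minimum over ordered pairs weighted by $\pi_{1r}$ reduces to the form of $\Binft[1]^2$). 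Applying Corollary~\ref{cor:kmeans:misclass} with $\Xs=P$, $\Xh=\Arek$, $\eps^2\asymp\sigma^2 k(n_1+n_2)$, and $c_r=1/2$, the smallness hypothesis $4(1+\kappa)^2\eps^2<\min_r\delta_r^2(P)\,n_{1r}$ becomes exactly the assumed bound on $\sigma^2 k/(\bar n\Binft[1]^2)$ after using $(n_1+n_2)/(n_1 n_2)=1/\bar n$, and the conclusion yields $\Misb\lesssim(1+\kappa)^2\sigma^2 k/(\bar n\Binf[1]^2)$, as claimed.

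The only real subtlety is in this last step: one must carefully distinguish the symmetric statistic $\Binf[1]^2$ governing $\delinf^2(P)$, which appears in the misclassification bound itself, from the asymmetric statistic $\Binft[1]^2$ arising from the per-cluster quantity $\delta_r^2(P)\,n_{1r}$, which appears in the Corollary's smallness condition. Beyond this, the sub-Gaussian setting is structurally cleaner than the Bernoulli one: the dimension-free concentration $\opnorm{A-P}\lesssim\sigma\sqrt{n_1+n_2}$ holds unconditionally, so the data-driven regularization machinery of Section~\ref{sec:concent} is bypassed entirely.
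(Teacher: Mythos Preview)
Your proof is correct and follows essentially the same route as the paper: invoke the standard sub-Gaussian operator-norm bound with $t=\sqrt{n_1+n_2}$ to replace~\eqref{eq:gen:concent}, then rerun the \scrr/\scerr argument (Lemma~\ref{lem:RR:dev} plus Corollary~\ref{cor:kmeans:misclass}) with $\dg$ replaced by $\sigma^2(n_1+n_2)$ and $\Psinf[1]^2,\Psinft[1]^2$ by $n_1n_2\Binf[1]^2,\,n_1n_2\Binft[1]^2$. One cosmetic point: since Algorithm~\ref{alg:scsg} skips regularization, the truncated matrix is $A^{(k)}$ rather than $\Arek$; your argument is otherwise slightly more explicit than the paper's (you spell out the isometry-invariance reduction from $\Zh_1\Sigh$ to $A^{(k)}$, which the paper's own proof leaves implicit by citing Theorem~\ref{thm:scrr} instead of Theorem~\ref{thm:scerr}).
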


See Appendix~\ref{sec:proof:subg} for the proof.
Consider the typical case where the number of clusters $k$ and the separation between them, $\Binf[1]^2$, remains fixed as $\bar n \to \infty$. Then, $\sigma^2/ \bar n$ plays the role of the signal-to-noise ratio (SNR) for the clustering problem and as long as $\sigma^2/ \bar n = o(1)$,  Algorithm~\ref{alg:scsg} is consistent in recovering the clusters. Of course, there are other conditions under which we have consistency, e.g., cluster separation quantity $\Binf[1]^2$ could go to zero, and $k$ could grow as well (as $\bar n \to \infty$) and as long as $\sigma^2 / \bar n  = o( \Binf[1]^2/k )$ the algorithm remains consistent.

%\aaa{Would be interesting to be able to say something about the sub-Gaussian mixture model: $x_i = \mu_{z_i} + \sigma w_i$ where $w_i$ is either a general sub-Gaussian vector, or a vector with independent sub-Gaussian entries. The difficulty is that we need to define a similarity matrix, say $A_{ij} = \norm{x_i - x_j}$ or $A_{ij} = \norm{x_i - x_j}^2$ and then argue that $A = (A_{ij})$ somehow concentrates around its mean or some fixed matrix, say $P = (\norm{\mu_{z_i} - \mu_{z_j}}^r)$ for $r=1$ or 2. The difficulty is that $A -P$ does not have independent entries. This example would be more convincing since we can relate consistency to separation in the mean vectors $\mu_1,\dots,\mu_k$ and the noise level $\sigma$.} 

%\begin{exa}
%	Let $X_i := 	\mu_{z_i} + \sigma w_{1i}$  and $Y_j := 	\mu_{z_j} + \sigma w_{2j}$ where $w_{1i}, w_{2j}$ are sub-Gaussian vectors
%\end{exa}

\subsection{Inhomogeneous random graphs}\label{sec:inhom:graphs}
%\aaa{Please change all $a$ to $\dg$}

Up to now, we have stated consistency results for the SBM of Section~\ref{sec:SBM}. SBM is often criticized for having constant expected degree for nodes in the same community, in contrast to degree variation observed in real networks. (Although, SBM in the sparse regime $d = O(1)$ can still exhibit degree variation within a community, since the node degrees will be roughly $\text{Poi}(d)$ distributed, showing little concentration around their expectations unless $d \to \infty$.) The popular remedy is to look at the degree-corrected block model (DC-SBM)~\cite{Karrer2011,zhao2012consistency,gao2018community}.  Instead, we consider the more general inhomogeneous random graph model (IRGM)~\cite{soderberg2002general,bollobas2007phase} which might be more natural in practice, since it does not impose the somewhat parametric restrictions of DC-SBM on the mean matrix. 

We argue that consistency results for the spectral clustering can be extended to a general inhomogeneous random graph (IRGM) model $A \sim \ber(P)$, assuming that the mean matrix can be well-approximated by a block structure. Let us consider a scaling as before:
\begin{align}\label{eq:irgm:mean:a}
	P := \frac{\Pn}{\sqrt{n_1 n_2}}, \quad\text{and take}\quad d = \sqrt{\frac{n_2}{n_1}} \infnorm{\Pn}.
\end{align}
Note that $P \in [0,1]^{n_1 \times n_2}$ is a not assumed to have any block structure. However, we assume that there are membership matrices $Z_r \in \hard_{n_r,\kk_r}, \, r=1,2$ such that $P$ over the blocks defined by $Z_1$ and $Z_2$ is approximately constant.  Let
\begin{align}\label{eq:Bt:def}
	\Bt_{st} := \frac1{n_{1s} n_{2t}} \sum_{ij} P_{ij} (Z_1)_{is} (Z_2)_{jt}, \quad s \in [\kk_1],\; t \in [\kk_2]
\end{align}
be the mean (or average) of $P$ over these blocks, and let $\Bt = (\Bt_{st}) \in [0,1]^{\kk_1 \times \kk_2}$. Compactly, $\Bt = N_1^{-1} Z_1^T P Z_2 N_2^{-1}$, in the notation established in Section~\ref{sec:SBM}.  We can define the SBM approximation of $P$ as
\begin{align}\label{eq:sbm:approx}
	\Pt := Z_1 \Bt Z_2^T = \Zb_1 \Zb_1^T P \Zb_2 \Zb_2^T = \PZ_1 P \PZ_2,
\end{align}
recalling $\Zb_r = Z_r N_r^{-1/2}$ and introducing the notation $\PZ_r = \Zb_r \Zb_r^T$. Note that  $\PZ_r$ is a rank $\kk_r$ projection matrix. According to~\eqref{eq:sbm:approx}, the map $P \mapsto \Pi_1 P \Pi_2$ takes a (mean) matrix to its SBM approximation relative to $Z_1$ and $Z_2$. We can thus define a similar approximation to $\Pn$,
\begin{align*}
	\Pnt := \Pi_1 \Pn \Pi_2,  \quad \text{noting that}\quad \Pt = \frac{\Pnt}{\sqrt{n_1 n_2}}.
\end{align*}

In order to retain the qualitative nature of the consistency results, the deviation of each entry $\Pn_{ij}$ from its block mean $(\Pnt)_{ij}$ should not be much larger than $(\Pnt)_{ij}^{1/2}$. In fact, we allow for a potentially larger deviation, assuming that:
\begin{align}\label{asu:Pn:dev}
	%infnorm{\Pn - \Pnt} \le C_0 \alpha^{1/4} \infnorm{\Pn}^{1/2} = C_0 \sqrt{a}, \quad \alpha:= \frac{n_2}{n_1},
	%\infnorm{\Pn - \Pnt} \le  C_0 \sqrt{a},
	\frac{1}{\sqrt{n_1 n_2}}\fnorm{\Pn - \Pnt} \le  C_0 \sqrt{d},
\end{align}
for some constant $C_0 > 0$ and $a$ satisfying~\eqref{eq:irgm:mean:a}. The key is the following concentration result:
\begin{prop}\label{prop:irgm:concent}
	Assume that $A \sim \ber(P)$ as in~\eqref{eq:matrix:ber:defn}, with $n_1 \le n_2$, and let $\Are$ be obtained from $A$ by the regularization procedure in Theorem~\ref{thm:concent:nonsym}, with $d' = d$ as given in~\eqref{eq:irgm:mean:a}. Let $\Pt$ be as defined in~\eqref{eq:sbm:approx}, and assume that it  satisfies~\eqref{asu:Pn:dev}. Then, with probability at least $1-n_2^{-c}$, %\aaa{$n_1$ or $n_2$, please check}%\zz{$n$ should be $n_1$.}
	\begin{align}\label{eq:irgm:concent}
		\opnorm{\Are - \Pt } \le c_2 \sqrt{\dg}.
	\end{align}
\end{prop}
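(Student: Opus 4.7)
The plan is a direct triangle-inequality decomposition,
\[
\opnorm{\Are - \Pt} \;\le\; \opnorm{\Are - P} + \opnorm{P - \Pt},
\]
bounding the first term by the stochastic concentration of $\Are$ around its true mean $P$ and the second term by the deterministic block-approximation quality assumed in~\eqref{asu:Pn:dev}. The goal is to show that each term is $O(\sqrt{\dg})$.

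For the stochastic term, I would invoke Theorem~\ref{thm:concent:nonsym} directly with $d' = \dg$. Under the scaling~\eqref{eq:irgm:mean:a}, one has $\dg = \sqrt{n_2/n_1}\,\infnorm{\Pn} = n_2 \infnorm{P} = \max_{i,j} n_2 p_{ij}$, so the required upper bound on $\dg$ is met with equality. The regularization sets can be taken to be $\Ic_1 = \{i:\sum_j A_{ij} > 2\dg\}$ and its column analogue $\Ic_2$; the size bounds $|\Ic_r| \le 10 n_2/\dg$ depend only on a uniform upper bound on expected degrees, so the argument given after Theorem~\ref{thm:concent:nonsym} (or the Chernoff bound leading to~\eqref{eq:gamma:dev} in Appendix~\ref{sec:proof:deg:reg}) carries over verbatim to the IRGM setting. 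Theorem~\ref{thm:concent:nonsym} with $r = 1$ then yields $\opnorm{\Are - P} \le 2C_2\sqrt{\dg}$ with probability at least $1 - n_2^{-1}$.

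For the deterministic term, I would use the elementary bound $\opnorm{\cdot} \le \fnorm{\cdot}$:
\[
\opnorm{P - \Pt} \;\le\; \fnorm{P - \Pt} \;=\; \frac{1}{\sqrt{n_1 n_2}}\,\fnorm{\Pn - \Pnt} \;\le\; C_0 \sqrt{\dg},
\]
where the equality uses $P = \Pn/\sqrt{n_1 n_2}$ and $\Pt = \Pnt/\sqrt{n_1 n_2}$, and the final inequality is exactly~\eqref{asu:Pn:dev}. Combining the two bounds gives $\opnorm{\Are - \Pt} \le (2C_2 + C_0)\sqrt{\dg}$ with probability at least $1 - n_2^{-c}$, yielding~\eqref{eq:irgm:concent}.

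There is no real obstacle: the assumption~\eqref{asu:Pn:dev} was deliberately stated in Frobenius norm at the $\sqrt{\dg}$ scale precisely to make this decomposition clean, since Frobenius dominates operator norm and matches the stochastic rate of Theorem~\ref{thm:concent:nonsym}. The only mildly nontrivial point is verifying that the regularization sets $\Ic_1,\Ic_2$ have the required sizes under IRGM, but as noted this uses only a uniform upper bound on expected degrees and is identical to the SBM case. Had one wished to use the fully data-driven Algorithm~\ref{alg:deg:reg} instead, an IRGM analogue of assumption~\ref{assum:deg} (degree regularity) would be required; under~\eqref{asu:Pn:dev} alone the oracle regularization of Theorem~\ref{thm:concent:nonsym} is already sufficient.
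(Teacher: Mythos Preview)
Your proposal is correct and follows essentially the same approach as the paper: a triangle inequality splitting into $\opnorm{\Are - P}$ (bounded via Theorem~\ref{thm:concent:nonsym} with $d' = d = n_2\infnorm{P}$) and $\opnorm{P - \Pt}$ (bounded by passing to Frobenius norm and invoking~\eqref{asu:Pn:dev}). Your discussion of why the size conditions on $\Ic_1,\Ic_2$ carry over to the IRGM setting is a welcome addition that the paper leaves implicit.
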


Proposition~\ref{prop:irgm:concent} provides the necessary concentration bound required in Step~1 of the analysis outlined in Section~\ref{sec:analysis:sketch}. In other words, bound~\eqref{eq:irgm:concent} replaces \eqref{eq:gen:concent} by guaranteeing a similar order of deviation for $\Are$ around $\Pt$ instead of $P$. Thus, all the results of the paper follow under an IRGM with a mean matrix satisfying~\eqref{asu:Pn:dev}, with the modification that the connectivity matrix and all the related quantities (such as $\sigma_\kk, \Psinf[1]^2$ and so on) are now based on $\Bt$, the connectivity matrix of the corresponding approximate SBM, as given in~\eqref{eq:Bt:def}.

\subsubsection{Graphon clustering}\label{sec:graphon:clust}
	As a concrete example of the application of Proposition~\ref{prop:irgm:concent}, let us consider a problem which we refer to as \emph{graphon clustering}.
	For simplicity, consider the case $n_1 = n_2 = n$.
	Let $\rhoz: [0,1]^2\to \reals_+$ be a bounded measurable function, and let $X_1, \dots, X_n, Y_1, \dots, Y_n\sim \text{Unif}([0,1])$ be an i.i.d. sample. Assume that 
	\begin{align}\label{eq:graphon:model}
		A_{ij} \mid \{X_{i'}\}, \{Y_{j'}\}  \;\sim\; \text{Bern}\big(\, \rhoz(X_i, Y_j)/n \,\big),
	\end{align}
	independently over $i,j= 1,\dots,n$.
	 Consider two partitions of the unit interval:  $\biguplus_{s=1}^{k_1} I_s=\biguplus_{t=1}^{k_2} J_t=[0,1]$ where $I_s$ and $J_t$ are (unknown) measurable sets. Let $\Psi \in [0,1]^{k_1\times k_2}$ and consider the block-constant function $\rhozt$ on $[0,1]^2$ defined by
	\begin{align}\label{eq:blk:const:rhozt}
	\rhozt:=\sum_{s=1}^{k_1}\sum_{t=1}^{k_2} \Psi_{st}1_{I_s\times J_t},
	\end{align}
	where $1_{I_s\times J_t}(x,y) = 1_{I_s}(x) 1_{J_t}(y)$ is the indicator of the set $I_s \times J_t$. Whenever $\rhoz$ has a block constant approximation such as $\rhozt$, we can consider $I_s$ and $J_t$ as defining implicit (true) clusters over nodes. More precisely, row node $i$ belongs to community $s$ if $1\{X_i \in I_s\}$ and similarly for the column labels. Note that none of $\{X_i\}$, $\{Y_j\}$, $\{I_s\}$ and $\{J_t\}$ are observed. One can ask whether we can still recover these implicit clusters given an instance of $A$.
	
		\begin{prop}\label{prop:rho:dev}
		Assume that there exists a function $\rhozt$ of the form~\eqref{eq:blk:const:rhozt}, such that
		\begin{align}\label{eq:L4:bound}
			\|\rhozt-\rhoz\|_{L^4}=o(\sqrt d)
		\end{align}
%		\begin{enumerate}[(a)]
%			\item $\|\rhozt-\rhoz\|_{L^2}\le C_1 \sqrt a$, and
%			\item $\|\rhozt-\rhoz\|_{L^4}\le C_0 n^p \sqrt a$ for some $p < 1/4$, 
%		\end{enumerate}
		%where $d := \infnorm{\rhoz}$. \zz{$a$ can be any $a$ in general.} 
		Consider normalized mean matrices $\Pn$ and $\Pnt$ with entries, $\Pn_{ij}=\rhoz(X_i,Y_j)$ and $\Pnt_{ij}=\rhozt(X_i,Y_j)$.  %Assume $a \ge 1$ \aaa{where did we use this?}. 
		Then, with high probability  $\fnorm{\Pn-\Pnt} \le n\sqrt \dg$.
%		\begin{align*}
%			\fnorm{\Pn-\Pnt} \le n\sqrt a%\sqrt{(C_0^4+C_1^2)a}
%		\end{align*}
		
		  %$\|\rhozt-\rhoz\|_{L^4}\le C_0 n^p \sqrt a$ for some $p > 1$, and $\|\rhozt-\rhoz\|_{L^2}\le C_1 \sqrt a$.
	\end{prop}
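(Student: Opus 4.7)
The plan is to decompose $\fnorm{\Pn-\Pnt}^2=\sum_{i,j} g(X_i,Y_j)$, where $g(x,y):=(\rhoz(x,y)-\rhozt(x,y))^2$, into its expectation plus a fluctuation term. Since $X_i,Y_j$ are i.i.d.\ uniform on $[0,1]$, each summand has expectation $\|\rhoz-\rhozt\|_{L^2}^2$, and Jensen's inequality on the probability space $[0,1]^2$ gives $\|\cdot\|_{L^2}\le\|\cdot\|_{L^4}$, so hypothesis~\eqref{eq:L4:bound} yields
\[
\ex\,\fnorm{\Pn-\Pnt}^2 \;=\; n^2\|\rhoz-\rhozt\|_{L^2}^2 \;\le\; n^2\|\rhoz-\rhozt\|_{L^4}^2 \;=\; o(n^2 \dg).
\]
In particular, for $n$ large enough, $\ex\,\fnorm{\Pn-\Pnt}^2 \le n^2\dg/2$.

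For the concentration step I would view $S:=\fnorm{\Pn-\Pnt}^2$ as a function of the $2n$ independent variables $X_1,\ldots,X_n,Y_1,\ldots,Y_n$ and apply McDiarmid's bounded differences inequality. Since $\rhoz$ is bounded and $\rhozt$ may WLOG be truncated at $\|\rhoz\|_\infty$ (truncating each block value in~\eqref{eq:blk:const:rhozt} can only decrease the $L^4$ error, since it brings $\rhozt$ closer to $\rhoz$ pointwise on that block), both functions lie in $[0,\dg]$, recalling $\dg=\|\rhoz\|_\infty$ when $n_1=n_2=n$. Perturbing a single $X_i$ (or $Y_j$) alters at most the $n$ terms indexed by the opposite side, each of which is bounded by $4\dg^2$; hence the per-coordinate bounded differences satisfy $c_i\le 4n\dg^2$ and
\[
\Pr\!\big(|S-\ex S|\ge t\big)\;\le\; 2\exp\!\Big(-\tfrac{c\,t^2}{n^3\dg^4}\Big).
\]
Choosing $t=n^2\dg/2$ reduces this to $2\exp(-c'n/\dg^2)$, which is $o(n^{-k})$ for every fixed $k$ in the bounded-graphon regime where $\dg=O(1)$, and more generally as long as $\dg=o(\sqrt{n/\log n})$.

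Combining the two bounds gives $S\le n^2\dg$ with high probability, which rearranges to $\fnorm{\Pn-\Pnt}\le n\sqrt{\dg}$, the claim. The main delicate point is the $n^3\dg^4$ scaling of the McDiarmid variance proxy, which would degrade the tail if $\dg$ grew too quickly; but the standing boundedness assumption on $\rhoz$ keeps $\dg$ under control and makes this a non-issue. Should one prefer to avoid McDiarmid, an equally serviceable route is to compute $\var(S)$ directly: covariances of the summands $g(X_i,Y_j)$ decouple except for pairs sharing an $X$ or a $Y$, yielding a $U$-statistic-type variance of order $n^3$ that, via Chebyshev, leads to the same conclusion at the cost of a weaker tail.
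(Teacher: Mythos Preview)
Your mean computation is correct and matches the paper. The McDiarmid route, however, has two problems. First, the ``WLOG truncate $\rhozt$ at $d$'' step is not actually a reduction: truncating $\rhozt$ changes $\Pnt$, and the statement asks for a bound on $\fnorm{\Pn-\Pnt}$ for the \emph{given} $\rhozt$, not for a truncated surrogate. To repair this you would have to control $\fnorm{\Pnt-\Pnt'}$ for the truncated $\Pnt'$, but $\rhozt-\rhozt'$ is precisely the unbounded excess, so bounded-differences does not apply to it and you are forced back to a second-moment argument anyway. Second, even granting the truncation, the tail $\exp(-cn/d^2)$ is only $o(1)$ when $d=o(\sqrt n)$; since the regime of interest has $d\to\infty$ (see the consistency discussion right after this proposition), this is an extra hypothesis the paper's argument does not need.

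The Chebyshev alternative you sketch at the end is the right idea and is essentially what the paper does. The paper rewrites $\fnorm{\Pn-\Pnt}^2=\sum_{j\in\mathbb Z_n}Z_j$ with $Z_j=\sum_{i\in\mathbb Z_n} f(X_i,Y_{i+j})$ and $f=(\rhoz-\rhozt)^2$; for each fixed shift $j$ the summands are i.i.d., so $\var(Z_j)\le n\,\ex f^2=n\|\rhoz-\rhozt\|_{L^4}^4=o(nd^2)$ directly from the $L^4$ hypothesis, Chebyshev gives $\pr(Z_j\ge nd)=o(1/n)$, and a union bound over the $n$ diagonals finishes. No boundedness of $\rhozt$ and no growth restriction on $d$ are used. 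Your direct $U$-statistic variance computation reaches the same place (indeed with a slightly sharper $o(1/n)$ tail for the whole sum), so just commit to that route and drop McDiarmid.
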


	%such that for some $p<1/4$, $\|\rhozt-\rhoz\|_{L^4}\le C_0 n^p \sqrt a$ and $\|\rhozt-\rhoz\|_{L^2}\le C_1 \sqrt a$ where we assume $a=\max_{s,t}B_{st}\ge 1$, then $\|\Pn-\Pnt\|_F \le n\sqrt{(C_0^4+C_1^2)a}$ holds w.h.p. It follows that, given $A$, we can classify which $X_i$'s belongs to the same partition of $[0,1]$ with small misclassification rate (need to quote the rate from previous theorem or corollary) w.h.p. The same conclusion holds for $Y_i$'s.
%\end{exa}
Proposition~\ref{prop:rho:dev} shows that a fourth moment bound of the form~\eqref{eq:L4:bound} is enough to guarantee~\eqref{asu:Pn:dev} and as a consequence the result of Proposition~\ref{prop:irgm:concent}. In order to apply the results of the paper, we require that~\eqref{eq:L4:bound} holds with $d = \infnorm{\rhoz}$. Then, all the consistency results of the paper follow with $\Psi$ replaced by that from~\eqref{eq:blk:const:rhozt}, and regularization Algorithm~\ref{alg:deg:reg} replaced with that of Theorem~\ref{thm:concent:nonsym}. 

For example, with $\sigma_k$  as defined in Section~\ref{sec:SBM} (using $\Psi$ from~\eqref{eq:blk:const:rhozt} in forming $\Bb$), Theorem~\ref{thm:scone} gives a misclassification rate  at most $O( \dg/\sigma_k^2)$ for recovering the labels $1\{X_i \in I_s\}$ by the \scone algorithm. In typical cases, it is plausible to have $\sigma_k^2 \asymp \dg^2$ (cf.~Remark~\ref{rem:why:it:is:consistency}), hence a misclassification rate of  $O(1/\dg) = o(1)$ as $\dg \to \infty$. Proposition~\ref{prop:rho:dev} is an illustrative example, and one can obtain other conditions by assuming more about the deviation $\rhozt-\rhoz$, such as boundedness.

\begin{rem}
	We note that graphon clustering problem considered above is different from what is typically called graphon estimation. In the latter problem, under a model of the form~\eqref{eq:graphon:model}, one is interested in recovering the mean matrix $\ex[A]$ or $\rhoz$ in the MSE sense. This problem has been studied extensively in recent years, often under the assumption of smoothness of $\rhoz$, for maximum likelihood SBM approximation~\cite{airoldi2013stochastic, olhede2014network, gao2015rate, klopp2017oracle}  and spectral truncation~\cite{xu2017rates}. The graphon clustering problem, as far as we know, has not been considered before and is concerned with recovering the underlying clusters, assuming that such \emph{true clusters exist}. We only need the existence of a block constant approximation $\rhozt$, over the true clusters (of the form~\eqref{eq:blk:const:rhozt}) that satisfies $\|\rhozt-\rhoz\|_{L^4}=o(\sqrt d)$ for $d = \infnorm{\rhozt}$. Then our results implicitly imply that the underlying clusters are identifiable and consistently recovered by spectral approaches. Note that we do not impose any explicit smoothness assumption on $\rho$ and there is no lower bound requirement on $d$ (such as $d = \Omega(\log n)$ in~\cite{xu2017rates}).
\end{rem}

\section{Simulations}\label{sec:sim}

We now present some simulation results showing the performance of the data-driven regularization of Section~\ref{sec:data:driven:trunc}.
We sample from the bipartite SBM model with connectivity matrix
\begin{align}\label{eq:sim:B}
B = \sqrt{ \frac{\log(n_1 n_2)}{n_1 n_2} } B_0, \quad \text{ where }\quad
B_0 = \frac12
\begin{bmatrix}
6 & 1 & 1 & 1 \\
1 & 6 & 1 & 1 \\
1 & 1 & 6 & 1
\end{bmatrix},
\end{align}
for which $\Psi = \sqrt{\log (n_1n_2)} B_0$ as in~\eqref{eq:nonsym:mean:def}.
We let $n_1 = n_0 k_1$ and $n_2 = n_0 k_2$, and we vary $n_0$. Note that $k_1 = 3$ and $k_2 = 4$. We measure the performance using the normalized mutual information (NMI) between the true and estimated clusters. The NMI belongs to the interval $[0,1]$ and is monotonically increasing with clustering accuracy.  We consider Algorithm~\ref{alg:deg:reg} with regularization parameter $\tau = 1, 1.2, 1.4$ and $\infty$, where $\tau=\infty$ corresponds to no regularization. Although we have established theoretical guarantees for  $\tau = 3$, this constant is not optimal and any scalar $\asymp$ 1 might perform well.
% When the constant is infinity, it means there is no regularization.

In this model, the key parameter $d =  \sqrt{n_2/n_1} \infnorm{\Psi} \sim \sqrt{\log n_0}$ as $n_0 \to \infty$. In other words, the maximum expected degree of the network scales as $\sqrt{\log n_0}$ which is enough for the consistency of spectral clustering; in fact, results in Section~\ref{sec:consist:res} predict  a misclassification rate of $O( d^{-1} ) = O\big( (\log n_0)^{-1/2}\big)$ for various spectral algorithms discussed in this paper.

\begin{figure}[t]
	\begin{tabular}{cc}
		\includegraphics[width=.49\textwidth]{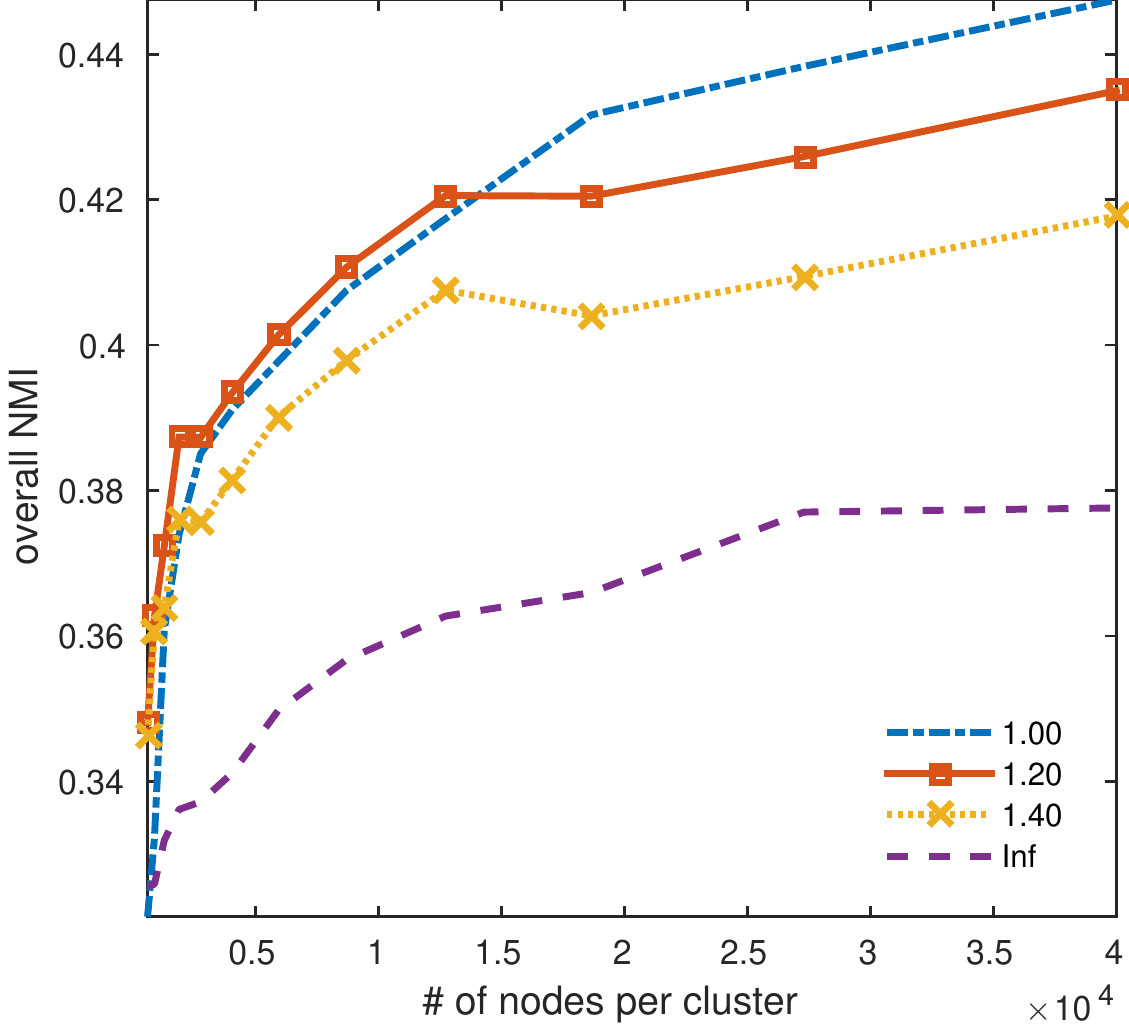}\; &
		\includegraphics[width=.49\textwidth]{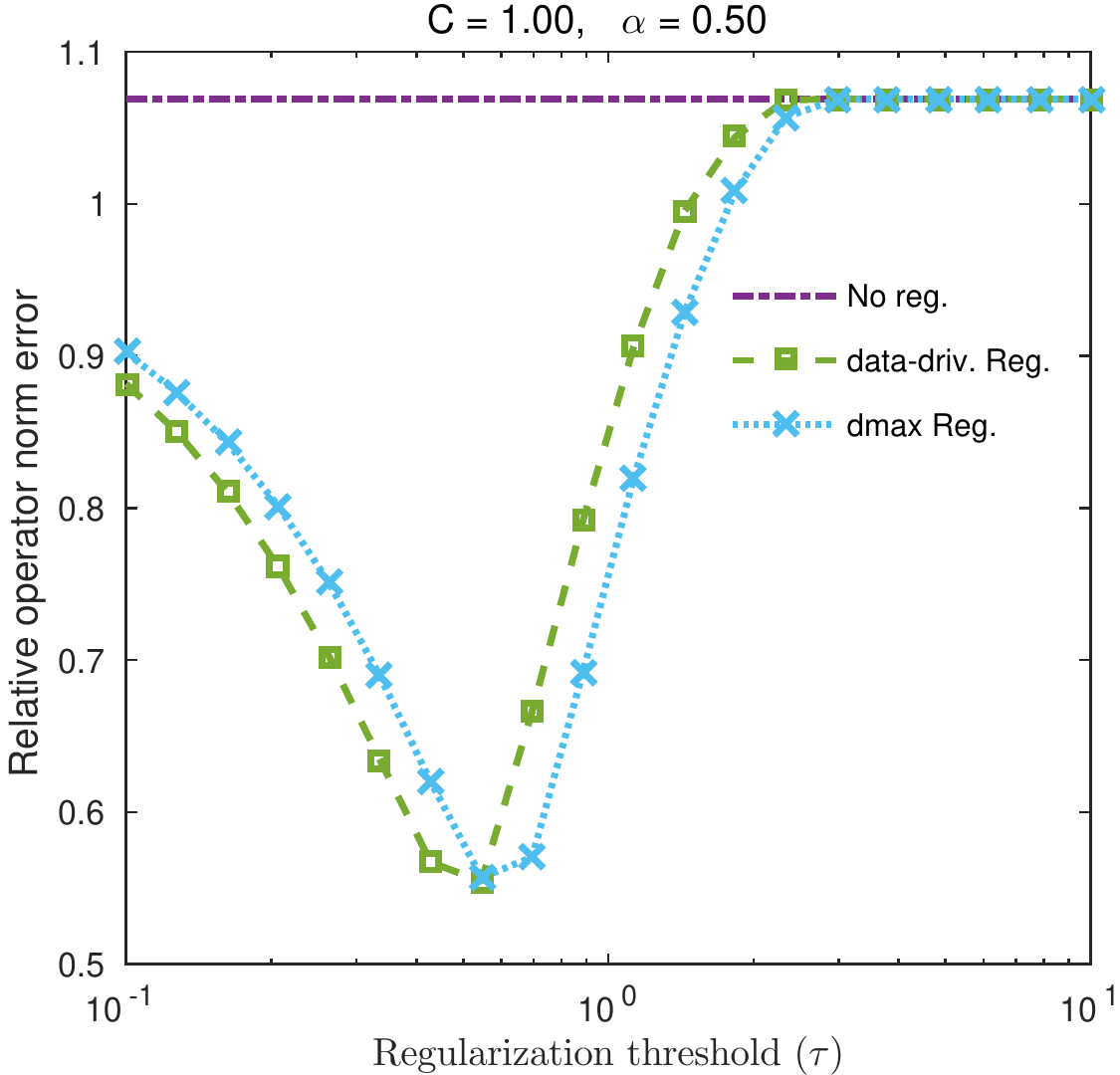} \\
		(a) & (b)
	\end{tabular}
	\caption{(a) NMI plots for  \scerr algorithm with degree regularization Algorithm~\ref{alg:deg:reg} for various values of $\tau = 1,1.2,1.4$ and $\infty$. (b) Relative operator norm error between the adjacency matrix and its expectation, with data-driven and oracle ($\dmax$) regularization  as well as no regularization.}
	\label{fig:nmi}
\end{figure}

Figure~\ref{fig:nmi}(a) shows the NMI plots as a function of $n_0$ for the \scerr algorithm with the regularization scheme of Algorithm~\ref{alg:deg:reg}. In the \kmeans step, we have used \verb|kmeans++| which as described in Remark~\ref{rem:kmeans} satisfies the approximation property of Section~\ref{sec:kmeans:step} with $\kappa = O(\log (k_1 \wedge k_2)) = O(1)$ in this case. The results are averaged over $15$ replicates. The plots clearly show that  the regularization considerably boosts the performance of spectral clustering for model~\eqref{eq:sim:B}. 

Figure~\ref{fig:nmi}(b) shows the relative operator norm error  between the (regularized) adjacency matrix and its expectation, i.e., $\opnorm{\Are - \ex A } / \opnorm{\ex A}$ with and without regularization ($\Are = A$). The plots correspond to the same SBM model with $n_0 = 500$ (and the results are averaged over 3 replicates). For the regularization, we consider both the oracle where the degrees are truncated to $\tau \dmax$ and $\tau \dmax'$ for the rows and columns (see Section~\ref{sec:data:driven:trunc} for the definitions of these quantities) as well as the data-driven one that truncates as in Algorithm~\ref{alg:deg:reg}. The plots show the relative error as a function of $\tau$ and we see that the regularization clearly improves the concentration. We also note that the behavior of the data-driven truncation closely follows that of the oracle as predicted by Theorem~\ref{thm:concent:data:driven}. 
\section*{Acknowledgement}
We thank Zahra S. Razaee, Jiayin Guo and Yunfeng Zhang for helpful discussions.

\printbibliography

%\pagebreak

\appendix

\section{Proofs}
\subsection{Proofs of Section~\ref{sec:dilation}}
\begin{proof}[Proof of Lemma~\ref{lem:DK:Z:dev}]
	Let $\Wb$ and $\Wh$ be the $W$ of Lemma~\ref{lem:dilation}(a) for $P^\dagger$ and $\Are^\dagger$, respectively. Let us also write $\Wb_1$ and $\Wh_1$ for the $(n_1+n_2) \times \kk$ matrices obtained by taking the submatrices of $\Wb$ and $\Wh$ on columns $1,\dots,k$. We have
	\begin{align*}
	\Wb_1 = \frac1{\sqrt{2}}
	\begin{pmatrix}
	\Zb_1 \Usi \\
	\Zb_2 \Vsi
	\end{pmatrix}, \quad 
	\Wh_1 = \frac1{\sqrt{2}}
	\begin{pmatrix}
	\Zh_1  \\
	\Zh_2 
	\end{pmatrix}.
	\end{align*}
	Note that $\Wb_1,\Wh_1 \in \ort{(n_1 +n_2)}{k}$.  Let $\proj_{\Wb_1}$ be the (orthogonal) projection  operator, projecting onto $\img(\Wb_1)$, i.e., the column span of $\Wb_1$, and similarly for $\proj_{\Wh_1}$. We have
	\begin{align*}
	\opnorm{\Pi_{\Wh_1}  - \Pi_{\Wb_1} } 
	&\le \frac{2}{2 \sigma_\kk}\opnorm{\Are^\dagger - P^\dagger} 
	&& (\text{Symmetric DK and Lemma~\ref{lem:dilation}(d)})  \\
	&= \frac{1}{ \sigma_K}\opnorm{(\Are - P)^\dagger}
	&& (\text{Linearity of dilation}) \\
	&= \frac{1}{ \sigma_K}\opnorm{\Are - P}
	&& (\text{Lemma~\ref{lem:dilation}(b)}).
	\end{align*}
	The next step is to translate the operator norm bound on spectral projections into a Frobenius bound. The key here is the bound on the rank of spectral deviations which leads to a $\sqrt{k}$ scaling as opposed to $\sqrt{n_1 + n_2}$, when translating from operator norm to Frobenius: 
	\begin{align*}
	\min_{Q\, \in \ort{k}{k}}\; \fnorm{\Wh_1-\Wb_1 Q}&\le \fnorm{\Pi_{\Wh_1}  - \Pi_{\Wb_1} } &&\text{(By Lemma~\ref{lem:proj:align} in Appendix~\ref{app:aux})}\\
	&\le \sqrt{2\kk}\, \opnorm{\Pi_{\Wh_1}  - \Pi_{\Wb_1} } && (\,\rank(\Pi_{\Wh_1}  - \Pi_{\Wb_1} ) \le 2\kk\,)\\
	&\le \frac{\sqrt{2\kk}}{\sigma_\kk} \opnorm{\Are - P}.
	\end{align*}
	Since 
	$2\fnorm{\Wh_1-\Wb_1 Q}^2 = \fnorm{\Zh_1 - \Zb_1 \Usi Q}^2 +  \fnorm{\Zh_2 - \Zb_2\Vsi Q}^2$, we obtain the desired result after combining with~\eqref{eq:gen:concent}.
\end{proof}

\subsection{Proofs of Section~\ref{sec:concent}}\label{sec:proof:deg:reg}
%\aaa{This section should move to an appendix. Please also add a proof for Theorem~\ref{thm:concent:data:driven}.} \zz{Can we simply quote \cite[Theorem~2.1]{le2017concentration}? If so, we have better state this version in Theorem 2.}
Let us start with a relatively well-known concentration inequality:
\begin{prop}[Prokhorov]\label{prop:prokh:concent}
	Let $S = \sum_{i} X_i$ for independent centered variables $\{X_i\}$, each bounded by $c < \infty$ in absolute value a.s. and suppose $v\ge \sum_i \ex X_i^2$, then
	\begin{align}\label{eq:poi:concent}
	\pr\big(S > vt \big) \le \exp[ {- v h_c(t)} ], \quad t \ge 0, \quad \text{where}\;
	h_c(t) := \frac3{4c} t \log \big(1+\frac{2c}{3} t\big).
	\end{align}
	Same bound holds for $\pr(S < -vt)$.
\end{prop}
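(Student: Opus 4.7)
The plan is to prove this via the classical exponential (Chernoff) bounding method, first obtaining a Bennett-type inequality and then converting it to the Prokhorov form via an elementary algebraic comparison.

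First, I would start from the Chernoff bound $\pr(S>vt)\le e^{-\lambda v t}\prod_i \ex[e^{\lambda X_i}]$ for $\lambda\ge 0$, and control each factor using the boundedness and centering of $X_i$. Expanding $e^{\lambda x}=1+\lambda x+\sum_{k\ge 2}(\lambda x)^k/k!$ and using $|x|^k\le c^{k-2}x^2$ whenever $|x|\le c$, I obtain
\begin{align*}
\ex[e^{\lambda X_i}]\le 1+\ex[X_i^2]\,\frac{e^{\lambda c}-1-\lambda c}{c^2}\le \exp\!\Big(\ex[X_i^2]\,\frac{e^{\lambda c}-1-\lambda c}{c^2}\Big),
\end{align*}
using $\ex X_i=0$ in the first inequality and $1+y\le e^y$ in the second. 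Multiplying over $i$ and using $\sum_i \ex X_i^2\le v$ gives
\begin{align*}
\pr(S>vt)\le \exp\!\Big(\tfrac{v}{c^2}(e^{\lambda c}-1-\lambda c)-\lambda v t\Big).
\end{align*}

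Next, I would optimize the exponent over $\lambda\ge 0$. Setting the derivative in $\lambda$ to zero gives $\lambda^\ast=c^{-1}\log(1+ct)$, and substituting back yields the Bennett-type bound
\begin{align*}
\pr(S>vt)\le \exp\!\Big(-\tfrac{v}{c^2}\,\psi(ct)\Big),\qquad \psi(u):=(1+u)\log(1+u)-u.
\end{align*}
Writing $u=ct$, it remains to verify the algebraic comparison
\begin{align*}
\psi(u)\;\ge\;\tfrac{3u}{4}\log\!\big(1+\tfrac{2u}{3}\big),\qquad u\ge 0,
\end{align*}
since then $\tfrac{v}{c^2}\psi(ct)\ge \tfrac{3vt}{4c}\log(1+\tfrac{2ct}{3})=v h_c(t)$, which is the desired bound.

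The main step I expect to be the most delicate is this elementary inequality, but it can be handled by comparing second derivatives. Writing $g(u)=\tfrac{3u}{4}\log(1+\tfrac{2u}{3})$, both $\psi$ and $g$ vanish with vanishing first derivative at $u=0$, so it suffices to show $\psi''(u)\ge g''(u)$ for $u\ge 0$. A short calculation gives $\psi''(u)=(1+u)^{-1}$ and $g''(u)=3(3+u)/(3+2u)^2$, and the required inequality reduces to $(3+2u)^2\ge 3(1+u)(3+u)$, i.e. $u^2\ge 0$. Finally, the bound on $\pr(S<-vt)$ follows by applying the same argument to $-X_i$, whose variance bound and boundedness constant are unchanged.
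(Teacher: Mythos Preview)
Your proof is correct. The Chernoff--Bennett route you take is the standard derivation, and each step checks out: the MGF bound via $|X_i|^k\le c^{k-2}X_i^2$, the optimization giving the Bennett function $\psi(u)=(1+u)\log(1+u)-u$, and the second-derivative comparison reducing to $(3+2u)^2-3(1+u)(3+u)=u^2\ge 0$ are all fine.

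As for comparison with the paper: the paper does not actually prove this proposition. It introduces it as ``a relatively well-known concentration inequality'' and proceeds directly to apply it in the proofs of Lemmas~\ref{lem:deg:trunc} and~\ref{lem:deg:trunc:col}. So you have supplied a complete, self-contained proof where the paper simply cites the result; your approach is the classical one (Bennett's inequality followed by the Prokhorov comparison $\psi(u)\ge \tfrac{3u}{4}\log(1+\tfrac{2u}{3})$), and nothing more is needed.
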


We often apply this result with $c=1$. We note that for any $u \ge \frac23\alpha$ any $\alpha > 0$,
\begin{align}\label{eq:h:lower:bound}
h_1(\alpha u) \ge \frac{3}{4} \alpha  u \log \Big(\frac23 \alpha u\Big) \ge \frac{3}{4} \alpha u \log u^2 \ge \frac{3}{2} \alpha  u  \log u.
\end{align}

%\begin{proof}

\begin{proof}[Proof of Lemma~\ref{lem:deg:trunc}]
		We first note that
	\begin{align}
	\pr( D_i - d_i > v t) &\le \exp(- v h_1(t)), \quad \forall v \ge d_i,\; t \ge 0, \label{eq:Di:upper}\\
	\pr( D_i - d_i < - v t)&\le \exp(- v h_1(t)), \quad \forall v \ge d_i,\; t \ge 0 \label{eq:Di:lower}.
	\end{align}

	\textbf{Lower bound.}
	Let $Y_i =  1\{D_i < \dmax/2 \}$. Without loss of generality, assume that cluster~1 achieves the maximum in the definition of $\dmax$ (i.e., $t^*=1$), and fix $i$ such that  $z_{1i} = 1$. We have $d_i = \dmax$, hence applying~\eqref{eq:Di:lower} with $v = d_i = \dmax$ and $t = 1/2$,
	\begin{align*}
	\pr(D_i < \dmax / 2)   \; \le\; \exp(-\dmax/10),
	\end{align*}
	using  $h_1(1/2)=0.2157>1/10$.
	Let $q := \exp(-\dmax/10)$ so that $\ex Y_i  \le q$. By assumption $\dmax \ge \db$ is sufficiently large that $q \le 1/4$. We then have
	\begin{align*}
	\pr(D_{(\floor{n_1/\Db})}< d_{\max}/2) 
	&\le \pr\Big(\sum_{i=1}^{n_1} 1\{D_i\ge d_{\max}/2\}\le n_1/\Db\Big) 
	\le \pr\Big(\sum_{i:z_{1i}=1} 1\{D_i\ge d_{\max}/2\}\le n_1/\Db\Big)\\
	&=\pr\Big(\sum_{i:z_{1i}=1}Y_i >  n_{11} - n_1/\Db\Big)\\
	&\le\pr\Big(\sum_{i:z_{1i}=1}Y_i > n_{11}- 2n_1/\db\Big)+\pr\Big( \Db <\frac{\db}2 \Big)\ =: T_1 + T_2
	\end{align*}
	For the first term we have
	\begin{align*}
	T_1 \le \pr\Big(\sum_{i:z_{1i}=1} (Y_i-\ex Y_i) >  n_{11}(1-q) - 2 n_1/\db
	\,\Big).
	\end{align*}
	By assumptions $n_{11} \ge  n_1/(\beta k_1)$ and $\beta k_1 / \db \le 1/8$, we have $n_{11}(1-q) - 2 n_1/\db \ge n_{11}/2$. Taking $c=1$, $v=n_{11}q$, $t=1/(2q)$ in Proposition~\ref{prop:prokh:concent}, we have $vt = n_{11}/2$ and $t \ge 2$
	\begin{align*}
	T_1 \le \pr\Big(\sum_{i:\,z_{1i}=1} (Y_i-\ex Y_i) > n_{11}/2\Big) 
	&\le \exp\Big({-} n_{11} \,q\, h_1\big(\frac1{2q}\big)\Big)  \\
	&\le \exp\big({-} c_1 \, n_{11} \dmax \big) \le e^{-3 n_1 /5}
	\end{align*}
	%where we have used (\aaa{assume further that  $q \le 1/9$}) 
	%\aaa{AA: $\log$ in $h_c(t)$ seems to be key?} 
	%\zz{the failing probability is $\exp[-\frac 34 vt \log(1+2t/3)]$. Let $vt:=n_{11}/2$, then $t$ is a sufficiently large constant, so the failing probability $\le \exp[-vt]$.} \aaa{Yes, but this would give probability $\exp(-c\ n_{11})$ which would be $\exp(-c'n_1/\beta k_1)$. 	The current argument gives $\exp(-c \, n_{11} \dmax) \le \exp(-c'n_1)$.}
	%
	where we have applied~\eqref{eq:h:lower:bound} with $\alpha=1/2$ and $u = 1/q = e^{\dmax/10}$ to get
	\begin{align*}
	q\, h_1\big(\frac1{2q}\big) 
	\ge q \frac{3}{2} \frac{1}{2q} \log (1/q) = \frac{3}{40} \dmax
	%=  q \cdot \frac{3}{8q} \log\Big(1 + \frac{2}{3} \frac{1}{2q}\Big) 
	% \;\ge\; \frac{3}{8} \log\Big( \frac{1}{3q}\Big) \;\ge\; \frac38 \log\Big(\frac1{q^{1/2}}\Big) = \frac3{160} \dmax 
	\end{align*}
	and $n_{11} \dmax \ge n_1 \dmax / (\beta k_1) \ge 8 n_1$ since $\beta k_1 / \dmax \le  \beta k_1 / \db \le 1/8$ by assumption.
	
	\medskip
	%\aaa{Write down the argument for $T_2$.}
	For the second term $T_2$, we note that $n_1 (\Db - \db) = \sum_{i,j} (A_{ij} - \ex A_{ij})$. Applying Proposition~\ref{prop:prokh:concent} with $v = n_1 \db$ and $t=1/2$, we have
	\begin{align}\label{eq:T2:bound}
	T_2 = \pr\Big( n_1 (\Db - \db) < -n_1 \db/2\Big) \le \exp(- n_1 \db /10) \le e^{-n_1/ 5}.
	\end{align}
	using $h_1(1/2) \ge 1/10$ and $\db \ge 2$.
	
	\paragraph{Upper bound.} For any $ \in [n_1]$, applying~\eqref{eq:Di:upper} with $v = \dmax \ge d_i$, and $t=1/2$, we have 
	\begin{align*}
	\pr(D_i > 3 \dmax / 2)   \; \le\; \exp(-\dmax/10).
	\end{align*}
	Let $Z_i =  1\{D_i > 3 \dmax/2 \}$ and note that $\ex Z_i \le q = \exp(-\dmax/10)$, as in the case of the lower bound. Then,
	\begin{align*}
	\pr\Big(D_{(\floor{n_1/\Db})} > \frac32d_{\max}\Big) 
	&\le \pr\Big(\sum_{i=1}^{n_1} Z_i\ge \frac{n_1}{\Db} -1\Big)\\
	&\le\pr\Big(\sum_{i=1}^{n_1}Z_i\ge \frac {2n_1}{3\db}-1\Big)
	+\pr\Big( \Db >\frac{3\db}2 \Big) =: T_1' + T_2'.
	\end{align*}
	Using assumption $n_1 / \db \ge 2$, we have $2 n_1/(3\db) - 1 \ge n_1/(6\db) \ge n_1 / (6 \dmax)$, hence
	\begin{align}\label{eq:Tpp1}
	T_1' \;\le\; \pr\Big(\sum_{i=1}^{n_1}(Z_i - \ex Z_i) \ge \frac {n_1}{6\dmax} - n_1 q\Big) =: T''_1
	\end{align}
	%Now $1/(6\dmax) - q = 10 \log(1/q) - q \ge 1/2$ if $q \le 0.87$.
	We claim that if $\dmax \ge 1$, then for any $\gamma \ge 1$,
	\begin{align}\label{eq:gamma:dev}
	f(\gamma) := \pr\Big(\sum_{i=1}^{n_1}(Z_i - \ex Z_i) \ge \frac{n_1}{\gamma \dmax}\Big) \le 
	e^{-3 n_1 / (20\gamma)}.
	\end{align}
	To see this, taking $c=1$, $v=n_{1}q$, $t=1/(\gamma q \dmax)$ in Proposition~\ref{prop:prokh:concent}, we have
	\begin{align*}
	f(\gamma) 
	%	&\le \exp\Big({-} n_{1} \,q\, h_1\big(\frac1{2q}\big)\Big)  \\
	&\le \exp\Big({-} n_1 q \,h_1 \Big( \frac{1}{\gamma q \dmax} \Big) \Big).
	%&\le \exp\big({-} c_1 \, n_{1} \dmax \big) \le e^{-c_2 n_1}.
	\end{align*}
	Applying~\eqref{eq:h:lower:bound} with $\alpha = 1/(\gamma \dmax)$ and $u = 1/q = e^{\dmax/10}$, noting that $u \ge (2/3) \alpha$,
	\begin{align*}
	q\, h_1\big(\frac1{\gamma q \dmax}\big) \ge q \frac32 \frac{1}{\gamma q \dmax} \log (1/q) = \frac3{20 \gamma}
	% \frac{1}{16 \dmax} \log\Big(1 +  \frac{1}{18q\dmax}\Big) 
	%\;\ge\; \frac{3}{8} \log\Big( \frac{1}{3q}\Big) \;\ge\; \frac38 \log\Big(\frac1{q^{1/2}}\Big) = \frac3{160} \dmax 
	\end{align*}
	showing~\eqref{eq:gamma:dev}. 
	
	Going back to bounding $T''_1$, for sufficiently large $\dmax$, we have $1/(6\dmax) - q \ge 1/(12\dmax)$. Hence, we can apply~\eqref{eq:gamma:dev} with $\gamma = 12$ to conclude,
	$	T''_1 \le e^{- n_1 /80}.$ 
	%	
	%	Taking $c=1$, $v=n_{1}q$, $t=1/(12q \dmax)$ in Proposition~\ref{prop:prokh:concent}, we get 
	%	\begin{align}\label{eq:bounding:Tp1}
	%	T_1'' \;\le\; \pr\Big(\sum_{i=1}^{n_1}(Z_i - \ex Z_i) \ge \frac{n_1}{12\dmax}\Big)
	%	%	&\le \exp\Big({-} n_{1} \,q\, h_1\big(\frac1{2q}\big)\Big)  \\
	%	&\le \exp\Big({-} n_1 q \,h_1 \Big( \frac{1}{12 q \dmax} \Big) \Big) \le e^{- n_1/80}, 
	%	%&\le \exp\big({-} c_1 \, n_{1} \dmax \big) \le e^{-c_2 n_1}.
	%	\end{align}
	%	where we have applied~\eqref{eq:h:lower:bound} with $\alpha = 1/(12\dmax)$ and $u = 1/q = e^{\dmax/10}$, to obtain for $u \ge (2/3)\alpha$ (which holds for $\dmax > 0.06$),
	%	\begin{align*}
	%	q\, h_1\big(\frac1{12q \dmax}\big) \ge q \frac32 \frac{1}{12 q \dmax} \log (1/q) = \frac1{80}.
	%	% \frac{1}{16 \dmax} \log\Big(1 +  \frac{1}{18q\dmax}\Big) 
	%	%\;\ge\; \frac{3}{8} \log\Big( \frac{1}{3q}\Big) \;\ge\; \frac38 \log\Big(\frac1{q^{1/2}}\Big) = \frac3{160} \dmax 
	%	\end{align*}
	For $T'_2$, by the same argument as in~\eqref{eq:T2:bound}, we have $T_2 \le e^{-n_1/5}$.

	\paragraph{Proof of part~(b).} From the lower bound, we have that 
	$3D_{(\floor{n_1/\Db})} > \frac32d_{\max}$ on an event $\Ac$  with probability $\pr(\Ac)\ge 1-2 e^{- n_1/80}$. 
	Then,
	\begin{align*}
	W_i:= 1\big\{ D_i>3D_{(\lfloor n_1/\Db\rfloor)} \big\} \;\le\;  1\{D_i > 3 \dmax/2 \} = Z_i, \quad \text{on $\Ac$}.
	\end{align*}
	Hence, $\big|i: D_i>3D_{(\lfloor n_1/\Db\rfloor)}\big| = \sum_i W_i$ and
	\begin{align*}
	\pr \Big( \Big\{\sum_{i=1}^{n_1} W_i > \frac{10 n_1}{d} \Big\} \cap \Ac \Big) &\le  \pr \Big( \sum_{i=1}^{n_1} Z_i > \frac{10 n_1}{d} \Big) 
	= \pr \Big( \sum_{i=1}^{n_1} (Z_i - \ex Z_i) > \frac{10n_1}{d}  - n_1 q \Big).
	%\pr \Big( \sum_{i=1}^{n_1} (Z_i - \ex Z_i) > \frac{10 n_1}{d} - n_1 e^{-\dmax/10} \Big). %\;\le\; T''_1 \le  e^{- n_1/80}
	\end{align*}
	Since $\dmax\ge n_2\|P\|_\infty/(\beta k_2)=d/(\beta k_2)$,
	%\aaa{This does not seem true under our assumptions unless the maximum of $P$ occurs in the $(t^*,s^*)$ block which does not necessarily follow from the assumptions; could you double check},
	and $\beta k_2 \le \db /8 \le \dmax/8$ we have $\dmax^2 \ge  8d $. Recalling that $q = e^{-\dmax/10}$
	\begin{align}\label{eq:temp:34}
	q d = (q \dmax^2) (d/\dmax^2) \le 1/8.
	\end{align}
	for $\dmax \ge 90$.	We have
	\begin{align*}
	\frac{10n_1}{d} - n_1 q \ge \frac{9n_1}{d} \ge \frac{9n_1}{\dmax} \frac{1}{\beta k_2}
	\end{align*}
	hence we can apply~\eqref{eq:gamma:dev} with $\gamma = \beta k_2/9$ to obtain
	\begin{align*}
	\pr \Big( \sum_{i=1}^{n_1} W_i > \frac{10 n_1}{d} \Big) \le \exp\Big( {-} \frac{27 n_1}{20 \beta k_2}\Big).
	\end{align*}
The proof is complete. 
\end{proof}

\begin{proof}[Proof of Lemma~\ref{lem:deg:trunc:col}]
	%\paragraph{Controlling the column degrees.} 
	The proof of part~(a) follows exactly as in the case of row degrees establishing the result with probability at least $1 - 3 e^{-n_2/80}$. For part~(b), we have by the same argument as in the case of row degrees
	\begin{align*}
	\pr \Big( \big| j: D'_j>3D'_{(\lfloor n_2/\Db'\rfloor)} \big| > \frac{10n_2}{d}\Big)  
	&\le  \pr \Big( \sum_{j=1}^{n_2} (Z'_j - \ex Z_j') > \frac{10 n_2}{d} - n_2 q'\Big) + \pr (\Ac'^c) \\
	&:= T_3 + \pr (\Ac'^c)
	\end{align*}
	where $\Ac'$ is defined similar to $\Ac$ for column variables and is controlled similarly. We also have $q' := \exp(-\dmax'/10)$.
	
	To simplify notation, let $\alpha := n_2/n_1 \ge 1$.
	We have $\dmax' \ge n_1 \infnorm{P}/(\beta k_1) =  d /(\alpha \beta k_1)$, recalling $d = n_2 \infnorm{P}$. By assumption, $\db \ge 8\beta k_1 \alpha^2$. Since $n_1 \db = \sum_{ij} p_{ij} = n_2 \db'$, that is, $\db = \alpha \db'$, we obtain $\beta k_1 \le \db / ( 8 \alpha^2 ) =  \db'/ (8\alpha) \le  \dmax'/(8\alpha)$. It follows that $(\dmax')^2 \ge 8 d$ which is similar to what we had for the rows; hence,  $q'd \le 1/8$ as long as $\dmax' \ge 90$ (see~\eqref{eq:temp:34}), which is true since $\dmax' \ge \db' =\db / \alpha \ge \db / \alpha^2 \ge 90$ by assumption. Then,
	\begin{align*}
	\frac{10n_2}{d} - n_2 q' \ge \frac{9n_2}{d} \ge \frac{9n_2}{\dmax'} \frac{1}{ \alpha \beta k_1}.
	\end{align*}
	Therefore, applying the column counterpart of~\eqref{eq:gamma:dev} with $\gamma = \alpha \beta k_1 /9$,
	\begin{align*}
	T_3 \le  \exp\Big( {-} \frac{27 n_2}{20 \alpha \beta k_1}\Big) =  \exp\Big( {-} \frac{27 n_1}{20 \beta k_1}\Big).
	\end{align*}
	The proof is complete. 
\end{proof}	

\subsection{Proofs of Section~\ref{sec:kmeans:step}}

\begin{proof}[Proof of  Corollary~\ref{cor:kmeans:misclass}]
	Using~\eqref{eq:kmeans:proj:closeness}, we have $\df(\Xs,\Xt) \le (1+\kappa) \eps$ for any $\Xt \in \Pc_\kappa(\Xh)$. We now apply Proposition~\ref{prop:kmeans:misclass} to $\Xs$ and $\Xt$, both \kmeans matrices, with $(1+\kappa)\eps$ in place of $\eps$. 
\end{proof}

\begin{proof}[Proof of Proposition~\ref{prop:kmeans:misclass}]
	The proof follows the argument in~\cite[Lemma~5.3]{lei2015consistency} which is further attributed to~\cite{jin2015fast}.
	Let $\Cc_r$ denote the $r$th cluster of $X$, having center $q_r = q_r(X)$. We have $|\Cc_r| = n_r$. Let $x_i^T$ and $\xt_i^T$ be the $i$th row of $X$ and $\Xt$, respectively, and let
	\begin{align*}
	T_r := \{i \in \Cc_r:\; \dr(\xt_i,q_r) < c_r \delta_r\} = \{i \in \Cc_r:\; \dr(\xt_i,x_i) < c_r \delta_r\}
	\end{align*}
	using $x_i = q_r$ for all $i \in \Cc_r$ which holds by definition. Let $S_r = \Cc_r \setminus T_r$. Then, 
	\begin{align}\label{eq:temp:49985}
	|S_r|  c_r^2 \delta_r^2 \;\le\; \sum_{i\, \in\, S_r} \dr(\xt_i,x_i)^2 \le \eps^2 \implies \frac{|S_r|}{|\Cc_r|} \le \frac{c_r^{-2} \eps^2}{ n_r \delta_r^2}<1.
	\end{align}
	where we have used assumption~(b). It follows that $S_r$ is a proper subset of $\Cc_r$, that is, $T_r$ is nonempty for all $r \in [\kk]$.
	
	%	that is $\sum_k |S_k| \delta_k^2 \le 4\eps^2$. It follows that $|S_r| \delta_r^2 \le 4\eps^2$, hence $|S_k| < |\Cc_k|$, by assumption (ii). It follows that each $T_k$ is nonempty. 
	
	Next, we argue that  if two elements belong to different $T_r, r \in [\kk]$, they have different labels according to $\Xt$. That is, $i \in T_r$, $j \in T_\ell$ for $r \neq \ell$ implies $\xt_i \neq \xt_j$. Assume otherwise, that is, $\xt_i = \xt_j$. Then, by triangle inequality and $c_r + c_\ell \le 1$,
	\begin{align*}
	\dr(q_k, q_\ell) \le \dr(q_k,\xt_i) + \dr(q_\ell,\xt_j) < c_r \delta_r + c_\ell \delta_\ell \le \max\{\delta_r, \delta_\ell\}
	\end{align*}
	contradicting~\eqref{eq:center:sep:alt}. This shows that $\Xt$ has at least $\kk$ labels, since all $T_r$ are nonempty, hence exactly $\kk$ labels, since $\Xt \in \kmmt$ by assumption.
	%contradicting $\vnorm{q_k - q_\ell} \ge \max\{\delta_k,\delta_\ell\}$ from assumption~(i). 
	
	Finally, we argue that if two elements belong to the same $T_r$, they have the same label according to $\Xt$. This immediately follows from the previous step since otherwise there will be at least $\kk+1$ labels. Thus, we have shown that, for all $r \in [\kk]$, the labels in each $T_r$ are in the same cluster according to both $X$ and $\Xt$, that is, they are correctly classified. The misclassification rate over cluster $\Cc_r$ is then $\le |S_r|/|\Cc_r|$ which establishes the result in view of~\eqref{eq:temp:49985}.
\end{proof}

\subsection{Proofs of Section~\ref{sec:consist:res}}\label{sec:proof:consist:res}

\begin{proof}[Proof of Theorem~\ref{thm:scone}]
	Going through the three-step plan of analysis in Section~\ref{sec:analysis:steps}, we observe that~\eqref{eq:gen:concent} holds for $\Are$ by Theorem~\ref{thm:concent:nonsym}, and~\eqref{eq:Z:dev:bound} holds by Lemma~\ref{lem:DK:Z:dev}. We only need to verify conditions of Corollary~\ref{cor:kmeans:misclass}, so that $\kappa$-approximate \kmeans operator $\Pc_\kappa$ satisfies  bound~\eqref{eq:lqc:kmeans} of the \kmeans step. As in the proof of Theorem~\ref{thm:sc:prototype}, $\Xs =  \Zb_1 O \in \rmat(n_1,\kk)$, where $\Zb_1 \in \ort{n_1}{\kk_1}$ and $O := \Usi Q \in \ort{\kk_1}{\kk}$. Clearly, $\Xs$ has exactly $\kk$ distinct rows (recalling $\kk = \kk_1$). Furthermore, using the calculation in the proof of Theorem~\ref{thm:sc:prototype},
	\begin{align*}
	n_{1t}\,\delta_t^2  = n_{1t} \min_{s:\; s\neq t} (n_{1t}^{-1} + n_{1s}^{-1}) =  \min_{s:\; s\neq t} (1 + \frac{n_{1t}}{n_{1s}}) \ge 1.
	\end{align*}
	Recalling that $\eps^2 = C_2^2 \kk \dg/\sigma_\kk^2$, as long as
	\begin{align*}
	4 (1+\kappa)^2 \eps^2 = 4 C_2^2 (1+\kappa)^2 \,\kk \dg /\sigma_\kk^2 < 1 \le n_{1t}\,\delta_t^2
	\end{align*}
	condition~(b) of Corollary~\ref{cor:kmeans:misclass} holds and $\Pc_\kappa$ satisfies~\eqref{eq:lqc:kmeans} with $c = 4(1+\kappa)^2$ as in~\eqref{eq:Misinf:Misb:approx:kmeans}. The rest of the proof follows as in Theorem~\ref{thm:sc:prototype}.
\end{proof}

\begin{proof}[Proof of Lemma~\ref{lem:RR:dev}]
	Throughout the proof, let $\mnorm{\cdot} = \opnorm{\cdot}$ be the operator norm.
	Recall that $P = \ex A$ is the mean matrix itself, and let $\Delt := \Are - P$. 
	By Weyl's theorem on the perturbation of singular values, $|\sigma_i(\Are) - \sigma_i(P)| \le \mnorm{\Delt}$ for all $i$. Since $\sigma_{\kk+1}(P)  = 0$ (see~\eqref{eq:P:svd}), we have $\sigma_{\kk+1}(\Are) \le \mnorm{\Delt}$, hence
	\begin{align*}
	\mnorm{\Arek - P} &\le \mnorm{\Arek - \Are} + \mnorm{\Delt} &&(\text{triangle inequality})\\
	&= \sigma_{\kk+1}(\Are) + \mnorm{\Delt} &&(\text{by~\eqref{eq:approx:err:k:reduced:SVD}})\\
	&\le 2\mnorm{\Delt} &&(\text{Weyl's theorem}).
	\end{align*}
	Thus, in terms of the operator norm, we lose at most a constant in going from $\Are$ to $\Arek$. However, we gain a lot in Frobenious norm deviation. Since $\Are$ is full-rank in general, the best bound on $\Delt$ based on its operator norm is $\mnorm{\Delt}_F \le \sqrt{n_\wedge} \,\mnorm{\Delt}$ where $n_\wedge = \min\{n_1,n_2\}$. On the other hand, since $\Arek - P$ is of rank $ \le 2\kk$, we get
	\begin{align*}
	\mnorm{\Arek - P}_F \le \sqrt{2\kk}\mnorm{\Arek - P} \le 2 \sqrt{2\kk} \mnorm{\Delt}.
	\end{align*}  
	Combining with~\eqref{eq:gen:concent}, that is, $\mnorm{\Delt} \le C \sqrt{\dg}$, we have the result.
\end{proof}

\begin{proof}[Proof of Theorem~\ref{thm:scrr}]
	We only need to calculate $\delta(P) = \delinf(P)$ the minimum center separation of $P$ viewed as an element of $\kmm(n_1,n_2,\kk_1)$. Recall that
	\begin{align*}
	P = \Zb_1 \Bb\Zb_2^T = Z_1 N_1^{-1/2} (\Nb_1^{1/2} \Psi \Nb_1^{1/2}) \Zb_2^T = n_1^{-1/2} Z_1 \Psi \Nb_1^{1/2} \Zb_2^T.
	%P = \Zb_1 \Bb\Zb_2^T = n_1^{-1/2} Z_1 \Psi \Nb_1^{1/2} \Zb_2^T
	\end{align*}
	Let $e_s$ be the $s$th standard basis vector of $\reals^{\kk_1}$. Unique rows of $P$ are  $q_s^T := n_1^{-1/2} e_s^T (\Psi \Nb_2^{1/2}) \Zb_2^T$ for $s \in [\kk_1]$.  We have
	\begin{align*}
	\norm{q_s - q_t}_2^2 &= n_1^{-1}\norm{\Zb_2 \Nb_2^{1/2} \Psi^T (e_s - e_t) }_2^2 \\
	&= n_1^{-1} \norm{\Nb_2^{1/2} \Psi^T (e_s - e_t) }_2^2 
	= n_1^{-1}\sum_{\ell=1}^{\kk_2} \pi_{2\ell} (\Psi_{s \ell} - \Psi_{t \ell})^2.
	\end{align*}
	It follows that
	%\begin{align*}
	$	\delta^2(P) = \min_{t \neq s }	\norm{q_s - q_t}_2^2 = n_1^{-1} \Psinf[1]^2$.
	%\end{align*}
	We apply Corollary~\ref{cor:kmeans:misclass}, with $\Xs = P$ and $\Xh = \Arek$, taking $\eps^2 = 8 C^2 \kk \dg$ according to Lemma~\ref{lem:RR:dev}. Condition~(b) of the corollary holds if 
	\begin{align*}
	32 C^2 (1+\kappa)^2\kk \dg = 4(1+\kappa)^2 \eps^2 < n_{1t} \, \delta_{t}^2(P) = \pi_{1t} \min_{s :\; s\neq t} \sum_{\ell=1}^{\kk_2} \pi_{2\ell} (\Psi_{s \ell} - \Psi_{t \ell})^2 
	\end{align*}
	for all $t \in [\kk_1]$, which is satisfied under assumption~\eqref{eq:scrr:asump}. Corollary~\ref{cor:kmeans:misclass}, and specifically~\eqref{eq:Misinf:Misb:approx:kmeans} gives the desired bound on misclassification rate $\le 4 (1+\kappa)^2 \eps^2 / (n_1 \delta^2(P))$.
\end{proof}

\begin{proof}[Proof of Theorem~\ref{thm:scerr}]
	Recall that $\Arek = \Zh_1 \Sigh \Zh_2^T$ is the $k$-truncated SVD of $\Are$. Let $X^{(1)} =  \Zh_1 \Sigh$ and $X^{(2)} = \Arek$, and let $(x^{(1)}_i)^T$ and $(x^{(2)}_i)^{T}$ be their $i$th rows, respectively. Then, 
	\begin{align*}
	\norm{x^{(i)}_2 - x^{(j)}_2} = \norm{\Zh_2 (x^{(i)}_1 - x^{(j)}_1)}_2 = \norm{x^{(i)}_1 - x^{(j)}_1}_2, \quad \forall i\neq j,
	\end{align*}
	using $\Zh_2 \in \ort{n_2}{\kk}$ and~\eqref{eq:ort:isom:1}. Isometry-invariance of $\kalg$ implies $\Misb(\kalg(\Zh_1 \Sigh), \kalg(\Arek)) = 0$. Since $\Misb$ is a pseudo-metric on \kmeans matrices, using the triangle inequality, we get
	\begin{align*}
	\Misb\big(\kalg(\Zh_1 \Sigh), P\big) &\le \Misb\big(\kalg(\Zh_1 \Sigh), \kalg(\Arek)\big) 
	+ \Misb\big(\kalg(\Arek), P\big) \\
	&= \Misb\big(\kalg(\Arek), P\big).
	\end{align*}
	(In fact, using the triangle inequality in the other direction, we conclude that the two sides are equal.) The result now follows from Theorem~\ref{thm:scrr}.
\end{proof}

\begin{proof}[Proof of Corollary~\ref{cor:mean:param}]
	We have
	\begin{align*}
	\norm{\Lambda_{s*} - \Lambda_{t*}}^2 
	= \sum_{\ell=1}^{\kk_2} n_{2\ell}^2 (B_{s\ell} - B_{t\ell})^2 
	= \sum_{\ell=1}^{\kk_2} \frac{n_{2\ell}^2}{n_1 n_2} (\Psi_{s\ell} - \Psi_{t\ell})^2 
	= \alpha \sum_{\ell=1}^{\kk_2} \pi_{2\ell}^2 (\Psi_{s\ell} - \Psi_{t\ell})^2 
	\end{align*}
	where we have used $\alpha := n_2/n_1$. Recall from~\eqref{eq:Laminf:def} that $\Laminf^2 :=  \min_{t \neq s} \norm{\Lambda_{s*} - \Lambda_{t*}}^2$. %It follows that $\Laminf^2 \ge  \alpha\,\pinf{2} \Psinf[1]^2$.
	It follows that $\alpha^{-1}\Laminf^2 \le  \Psinf[1]^2$, using $\pi_{2,\ell} \le 1$. We also have $\Psinft[1]^2 \ge \pinf{1} \Psinf[1]^2 \ge \pinf{1} \alpha^{-1}\Laminf^2$. Recalling the definition of $a$ from~\eqref{eq:gen:concent}, and using $\Psi_{s \ell} = (\sqrt{n_1 n_2} / n_{2\ell}) \Lambda_{s\ell}$, we have 
	\begin{align*}
	\dg = \sqrt{\frac{n_2}{n_1}} \infnorm{\Psi} 
	\le  \sqrt{\frac{n_2}{n_1}} \frac{\sqrt{n_1 n_2}}{n_{2,\wedge}} \infnorm{\Lambda} 
	=  \frac1{\pinf{2}} \infnorm{\Lambda} \le \,\beta_2 \kk_2\,  \infnorm{\Lambda}.
	\end{align*}
	Hence, $\kk \dg \Psinf[1]^{-2} \le \kk  \,\beta_2 \kk_2\,  \infnorm{\Lambda} (\alpha^{-1}\Laminf^2)^{-1} = \beta_2 \kk \kk_2 \alpha \infnorm{\Lambda} \Laminf^{-2}$ which is the desired bound. We also note that
	\begin{align*}
	\kk \dg \Psinft[1]^{-2} \le  \kk  \dg \pinf{1}^{-1}\, \Psinf[1]^{-2} \le (\beta_1 \kk_1 )\, \kk \dg \Psinf[1]^{-2}
	\end{align*}
	which combined with the previous bound shows that the required condition~\eqref{eq:scrr:asump:Lambda} in the statement is enough to satisfy~\eqref{eq:scrr:asump}.
\end{proof}

%%%%%%%%%%%%%%%%%%%%

%\begin{proof}[Proof of Theorem~\ref{thm:scone:unequal}]
\subsection{Proof of Theorem~\ref{thm:scone:unequal}}\label{sec:proof:thm:scone:unequal}
	As in the proof of Theorem~\ref{thm:scone}, we only need to verify conditions of Corollary~\ref{cor:kmeans:misclass}, with $\Xs =  \Zb_1 \Usi Q \in \rmat(n_1,\kk)$, so that $\kappa$-approximate \kmeans operator $\Pc_\kappa$ satisfies  bound~\eqref{eq:lqc:kmeans} of the \kmeans step.  Recall that $\Zb_1 \in \ort{n_1}{\kk_1}$, $\Usi \in \ort{\kk_1}{\kk}$ and $Q \in \ort{\kk}{\kk}$. Clearly, $\Xs$ has exactly $\kk_1$ distinct rows, $\usi_s^T Q, \,s \in [\kk_1]$. Furthermore, 
	\begin{align*}
	\delta^2 := \delinf^2(\Zb_1 \Usi Q) = \delinf^2(\Zb_1\Usi)
	&= \min_{(t,s):\, t \neq s} \norm{n_{1t}^{-1/2} \usi_{t} - n_{1s}^{-1/2} \usi_{s}}_2^2 %= \min_{t \neq s} \; (n_{1t}^{-1} + n_{1s}^{-1})
	\end{align*}
	where the second equality is by~\eqref{eq:ort:isom:1}. Letting $\norm{\cdot} = \norm{\cdot}_2$, and $x =\pi_{1t}^{-1/2} $ and $ y=\pi_{1s}^{-1/2}$ for simplicity, and $z = (x,-y) \in \reals^2$,  we have
	\begin{align*}
	n_1 \norm{n_{1t}^{-1/2} \usi_{t} - n_{1s}^{-1/2} \usi_{s}}^2 &= 
	\norm{\pi_{1t}^{-1/2} \usi_{t} - \pi_{1s}^{-1/2} \usi_{s}}^2  \\
	&= \norm{ \begin{pmatrix}
		u_t & u_s
		\end{pmatrix}
		\begin{pmatrix}
		x \\ -y
		\end{pmatrix}
	}^2 \\ &=z^T (\Usi \Usi^T)_{\Ic}\, z, &\text{(where $\Ic = \{t,s\}$),}\\
	%\pi_{1t}^{-1} \norm{\usi_t}^2 + \pi_{1s}^{-1} \norm{\usi_s}^2 - 2 (\pi_{1t} \pi_{1s})^{-1/2} \ip{\usi_t,\usi_s} \\
	&= \norm{z}^2 -  z^T (I_{\kk_1} - \Usi \Usi^T)_{\Ic} \, z^T.
	%&\ge (\pi_{1t}^{-1} + \pi_{1s}^{-1}) (1-\rho_1) - 2 (\pi_{1t} \pi_{1s})^{-1/2} \rho_1 \\ 
	%&= (\pi_{1t}^{-1} + \pi_{1s}^{-1})  - ( \pi_{1t}^{-1} + \pi_{1s}^{-1} + 2 (\pi_{1t} \pi_{1s})^{-1/2}) \rho_1
	%&\ge (\pi_{1t}^{-1} + \pi_{1s}^{-1}) ( 1- 2\rho_1)
	\end{align*}
	%where $\Ic = \{t,s\}$. 
	Since $I_{\kk_1} - \Usi \Usi^T$ is positive semidefinite, using~\eqref{eq:U:incoh}, we have
	\begin{align*}
	0 \le z^T (I_{\kk_1} - \Usi \Usi^T)_{\Ic} \, z^T \le (1-\rho_1) \norm{z}^2,
	\end{align*}  
	hence, $n_1 \norm{n_{1t}^{-1/2} \usi_{t} - n_{1s}^{-1/2} \usi_{s}}^2 \ge \rho_1 \norm{z}^2 = \rho_1 (\pi_{1t}^{-1} + \pi_{1s}^{-1})$. 	It follows by~\eqref{asu:clust:prop},
	\begin{align*}
	(n_1 \delta^2 )^{-1} \le \frac1{\rho_1} \max_{t\neq s} \;(\pi_{1t}^{-1} + \pi_{1s}^{-1})^{-1} 
	\le  \frac1{\rho_1} \Big(\frac{\beta_1}{2 \kk_1}\Big).
	\end{align*}
	We also have
	\begin{align*}
	n_{1t} \,\delta_t^2 = n_{1t} \,\min_{s:\, s\neq t} \norm{n_{1t}^{-1/2} \usi_{t} - n_{1s}^{-1/2} \usi_{s}}_2^2 
	\; \ge \; \rho_1 \big(1 + \frac{\pi_{1t}}{\pi_{1s}}\big) \;\ge\; \rho_1.
	\end{align*}
	We obtain, with $\eps^2 = C_2^2 \kk \dg/\sigma^2_{\kk}$ and $O= \Usi Q$,
	\begin{align*}
	\Misb(\kalg(\Zh_1),\Zb_1) = \Misb(\kalg(\Zh_1),\Zb_1 O)
	\; \lesssim \;\frac{\eps^2}{ n_1 \delta^2} \;\lesssim\; \frac{\beta_1}{\rho_1}\frac{\kk}{\kk_1}  \frac{\dg}{\sigma_\kk^2}   
	\end{align*}
	%	which gives the result.
	as long as $4 (1+\kappa)^2 ( n_{1t}\,\delta_t^2)^{-1} \eps^2 = 4 C_2^2 (1+\kappa)^2 \, \rho_1^{-1}\,\kk \dg/\sigma_\kk^2 < 1 $, to guarantee that condition~(b) of Corollary~\ref{cor:kmeans:misclass} holds, so that $\Pc_\kappa$ satisfies~\eqref{eq:lqc:kmeans} with $c = 4(1+\kappa)^2$ as in~\eqref{eq:Misinf:Misb:approx:kmeans}. The proof is complete.%The rest of the proof follows as in Theorem~\ref{thm:sc:prototype}.
%\end{proof}

\subsection{Proof of Theorem~\ref{thm:scerr:subg:noise}}\label{sec:proof:subg}
The following concentration result for sub-Gaussian random matrices is well-known~\cite{vershynin2018high}:

\begin{lem}\label{lem:opnorm:sg}
	Let $A \in \reals^{n_1 \times n_2}$ have independent sub-Gaussian entries, and $\sigma = \max_{i,j} \|A_{ij}-\ex A_{ij}\|_{\psi_2}$. Then for any $t>0$, we have
	\begin{align}
	\opnorm{A- \ex A} \le C \,\sigma (\sqrt{n_1}+\sqrt{n_2}+t),
	\end{align}
	with probability at least $1-2\exp(-t^2)$.
\end{lem}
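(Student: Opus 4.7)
The plan is to prove Lemma~\ref{lem:opnorm:sg} via the classical $\epsilon$-net argument combined with a sub-Gaussian tail bound for linear forms. Since centering preserves the entrywise sub-Gaussian norms, I would first reduce to the case $\ex A = 0$ by working with $M := A - \ex A$, whose independent entries satisfy $\|M_{ij}\|_{\psi_2} \le \sigma$. Starting from the variational characterization
\[
\opnorm{M} = \sup_{u \in S^{n_1-1},\, v \in S^{n_2-1}} u^T M v,
\]
I would discretize the two unit spheres using $1/4$-nets $\mathcal{N}_1 \subset S^{n_1-1}$ and $\mathcal{N}_2 \subset S^{n_2-1}$. The standard volumetric estimate gives $|\mathcal{N}_r| \le 9^{n_r}$, and the usual approximation argument (writing $u = u_0 + (u - u_0)$ for $u_0 \in \mathcal{N}_1$ nearest to $u$, similarly for $v$, bounding the two cross terms by $\tfrac14 \opnorm{M}$ each and the leftover by $\tfrac1{16}\opnorm{M}$) yields $\opnorm{M} \le 2 \sup_{u \in \mathcal{N}_1,\, v \in \mathcal{N}_2} u^T M v$.

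For each fixed pair $(u,v) \in \mathcal{N}_1 \times \mathcal{N}_2$, the bilinear form $u^T M v = \sum_{i,j} u_i v_j M_{ij}$ is a sum of independent, centered sub-Gaussian variables whose squared sub-Gaussian norms sum to at most $\sum_{i,j} (u_i v_j)^2 \sigma^2 = \sigma^2 \|u\|_2^2 \|v\|_2^2 = \sigma^2$. Invoking the general sub-Gaussian Hoeffding inequality (e.g., as stated in \cite{vershynin2018high}) then yields $\pr(|u^T M v| > s) \le 2 \exp(-c s^2 / \sigma^2)$ for a universal $c > 0$. A union bound over the product net, whose cardinality is at most $9^{n_1 + n_2}$, produces
\[
\pr\Big(\sup_{u \in \mathcal{N}_1,\, v \in \mathcal{N}_2} |u^T M v| > s\Big) \le 2 \exp\big(-c s^2 / \sigma^2 + (n_1 + n_2)\log 9\big).
\]

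To conclude, I would choose $s = C' \sigma (\sqrt{n_1} + \sqrt{n_2} + t)$ with $C'$ large enough that, using the elementary inequality $(\sqrt{n_1} + \sqrt{n_2} + t)^2 \ge n_1 + n_2 + t^2$, the quantity $c s^2 / \sigma^2$ absorbs $(n_1 + n_2)\log 9 + t^2 + \log 2$. Multiplying by the net factor of $2$ then gives $\opnorm{A - \ex A} = \opnorm{M} \le C \sigma (\sqrt{n_1} + \sqrt{n_2} + t)$ with probability at least $1 - 2\exp(-t^2)$, with $C = 2C'$ an absolute constant. No step in this outline poses a real obstacle: the sub-Gaussian Hoeffding inequality is a standard black box, the $\epsilon$-net discretization is routine, and the lemma is essentially the textbook non-asymptotic operator-norm bound for matrices with independent sub-Gaussian entries.
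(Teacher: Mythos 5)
Your proof is correct: the paper does not prove this lemma but simply cites it as well-known from \cite{vershynin2018high}, and your $\epsilon$-net argument with the general sub-Gaussian Hoeffding bound is precisely the standard proof given there (Theorem 4.4.5 of that reference). The only nitpick is that the four-term decomposition you sketch (two cross terms of $\tfrac14\opnorm{M}$ plus a leftover of $\tfrac1{16}\opnorm{M}$) gives the factor $16/7$ rather than $2$ -- the clean factor $2$ comes from writing $u^T M v - u_0^T M v_0 = (u-u_0)^T M v + u_0^T M (v - v_0)$ -- but this is immaterial since the constant $C$ is unspecified.
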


	Let $t =\sqrt{n_1}+\sqrt{n_2}$ in Lemma~\ref{lem:opnorm:sg}, then $\opnorm{A-\ex A} \le 2 C \sigma (\sqrt{n_1}+\sqrt{n_2}) $
	with probability at least $1-2e^{-(n_1+n_2)}$. We thus have the concentration bound~\eqref{eq:gen:concent} with $\sqrt{\dg} = 2\sigma (\sqrt{n_1} + \sqrt{n_2})$. The proof of Theorem~\ref{thm:scrr} goes through and the result follows by replacing $\dg$ with the upper bound $\dg \le 4 \sigma^2 (n_1+n_2)$, and replacing $\Psinf[1]^2$ and $\Psinft[1]^2$ with $n_1 n_2 \Binf[1]^2$ and $n_1 n_2 \Binft[1]^2$, respectively.
	%
	% In this case, $\opnorm{A-EA}^2 \le 8 c^2 K^2(n_1+n_2) $. Now in the proof of Theorem \ref{thm:scrr}, we replace $ka$ by $ K^2(n_1+n_2) $, and obtain the desired misclassfication rate.

\subsection{Proofs of Section~\ref{sec:inhom:graphs}}\label{sec:proofs:inhom:graphs}
%\aaa{Changed $a$ to $\dg$ ...}

\begin{proof}[Proof of Proposition~\ref{prop:irgm:concent}]
	We have with the given probability,
	\begin{align*}
	\opnorm{\Are - \Pt } &\le 	\opnorm{\Are - P } + 	\opnorm{P - \Pt } \\
	&\le C \sqrt{\dg} + \frac{1}{\sqrt{n_1 n_2}} \opnorm{\Pn - \Pnt} \\
	&\le C \sqrt{\dg} + \frac{1}{\sqrt{n_1 n_2}} \fnorm{\Pn - \Pnt} \le (C_0 + C) \sqrt{\dg}
	\end{align*}
	where we have used Theorem~\ref{thm:concent:nonsym} to bound the first term using $d' = d = \sqrt{n_2/n_1} \infnorm{P^0} = n_2 \infnorm{P}$. The last inequality uses assumption~\eqref{asu:Pn:dev}. The proof is complete.
	%We have $\opnorm{\Pn - \Pnt} \le \fnorm{\Pn - \Pnt} \le \sqrt{n_1 n_2} \infnorm{\Pn - \Pnt} \le C_0 \sqrt{n_1 n_2 a}$ by assumption~\eqref{asu:Pn:dev}. Putting the pieces together, the proof is complete.
\end{proof}

\begin{proof}[Proof of Proposition~\ref{prop:rho:dev}]
	%Let $\Pn_{ij}=\rhoz(X_i,Y_j)$, $\Pnt_{ij}=\rhozt(X_i,Y_j)$, and define
	
	Letting $f(x,y) := [\rhoz(x,y)-\rhozt(x,y)]^2$,
	\begin{align*}
	\|\Pn-\Pnt\|_F^2
	%=\sum_{i=1}^n\sum_{j=1}^n [\rhozt(X_i,Y_j)-\rhoz(X_i,Y_j)]^2
	=\sum_{i=1}^n \sum_{j=1}^n f(X_i,Y_j)
	=\sum_{j\in\mathbb Z_n} \sum_{i\in \mathbb Z_n} f(X_i, Y_{i+j}),
	\end{align*}
	where $\ints_n = \ints / n \ints$ is $\{1,\dots,n\}$ viewed as a cyclic group of order $n$.
	We have 
	\begin{align*}
	\ex[f(X_1,Y_1)^2]=\|f\|_{L^2}^2=\|\rhoz-\rhozt\|_{L^4}^4=o(\dg^2)%\le C_0^4 n^{4p} a^2
	\end{align*}
	and $\ex[f(X_1,Y_1)] = \norm{f}_{L^1} \le  \norm{f}_{L^2} =o(\dg)$. Note that $\{f(X_i, Y_j):i,j\in \mathbb Z_n\}$ are not independent, however, for each $j\in\mathbb Z_n$, $\{f(X_i, Y_{i+j}):i\in \mathbb Z_n\}$ are i.i.d.. Let $Z_j :=  \sum_{i\in \mathbb Z_n} f(X_i, Y_{i+j}) $. By independence, 
	\begin{align*}
	\var(Z_j) = n \var(f(X_1,Y_{1})) \le n \norm{f}_{L^2}^2.
	\end{align*}
	Since $\ex Z_j = n \norm{f}_{L^1}$, by the Chebyshev inequality, for any $j \in \ints_n$,
	\begin{align*}
	\pr(Z_j \ge n\dg) &= \pr\big(Z_j - \ex Z_j \ge n\dg - n\norm{f}_{L^1} \big) \\
	&\le \frac{n \norm{f}_{L^2}^2}{(na - n\norm{f}_{L^1})^2} 	= \frac1n \frac{o(\dg^2)}{(\dg - o(\dg))^2} = o\Big( \frac 1n \Big).
	\end{align*} 
	It follows that $\pr\big( \fnorm{\Pn-\Pnt}^2 \ge  n^2 \dg \big) \le \sum_{j \in \ints_n} \pr\big( Z_j \ge n\dg \big) \le n \,o\big( n^{-1} \big) = o(1).$
\end{proof}

%where the last line follows from the following inequality valid for $x,y \ge 0$,
%\begin{align*}
%(x + y)(1-\rho_1) - 2 \sqrt{xy} \,\rho_1 &= (x+y) - \rho_1(\sqrt{x} + \sqrt{y})^2 \\
%&\ge (x+y) - 2 \rho_1 (x+y) %= (x+y)(1-2\rho_1).
%\end{align*}

\section{Alternative algorithm for the \kmeans step}\label{app:kmeans:replace}
%The conclusion of this section is that if the true cluster centers are at least $\delta$ apart, and we have $\mnorm{\Uh - U}_F \le \eps$, then misclassification rate is bounded by $O(\eps^2/(n\delta^2))$, assuming that $\eps^2 /(M \delta^2)$ is sufficiently small. Here $M$ is the minimal cluster size. Different algorithms can achieve this, but the nature of the bound and assumptions remain the same.

%\subsection{Approach 1: \kmeans replacement of~\cite{Gao2015a}}
In this appendix, we present a simple general algorithm that can be used in the \kmeans step, replacing the $\kappa$-approximate \kmeans solver used throughout the text. The algorithm is based on the ideas in~\cite{gao2017achieving} and~\cite{yun2014community}, and the version that we present here acheives the misclassification bound  $\eps^2/(n\delta^2)$ needed in Step~3 of the analysis (Section~\ref{sec:analysis:sketch}) without necessarily optimizing the \kmeans objective function. We present the results using the terminology of the \kmeans matrices (with rows in $\reals^d$) introduced in Section~\ref{sec:kmeans:step}, although the algorithm and the resulting bound work for data points in any metric space.

Let $X \in \kmmt$ be a \kmeans matrix and let us denote its centers, i.e. distinct rows, as $\{q_r(X), r \in [\kk]\}$. As in Definition~\ref{dfn:center:sep}, we write $\delta_r(X)$ and $n_r(X)$ for the $r$th cluster center separation and size, respectively, and $\delinf(X) = \min_r \delta_r(X)$ and $\ninf= \min_r n_r(X)$.
Assume that we have an estimate $\Xh \in \reals^{n \times d}$ of $X$, and let us write $d(i,j) := d(\xh_i,\xh_j), \, i,j \in [n]$ for the pairwise distances between the rows of $\Xh$. 

Algorithm~\ref{alg:kmeans:replace} which is a variant of the one presented in~\cite{gao2017achieving}, takes these pairwise distances and outputs cluster estimates $\Ch_1,\dots,\Ch_k \subset [n]$, after $k$ recursive passes through the data. A somewhat more sophisticated version of this algorithm appears in~\cite{yun2014community}, where one also repeats the process for $i=1,\dots,\log n$ and radii $R_i = i R_1 $ in an outer loop, producing clusters $\Ch_{r}^{(i)},r \in [k]$; one then picks, among these $\log n$ possible clusterings, the one that minimizes the \kmeans objective. The variant in~\cite{yun2014community} also leaves no unlabeled nodes by assigning the unlabeled to the cluster whose estimated center is closest. In the rest of this section, we will focus on the simple version presented in Algorithm~\ref{alg:kmeans:replace} as this is enough to establish our desired bound. The following theorem provides the necessary guarantee:

% $\delinf(X)$ be the center separation of $X$ as in Definition~?.
%Assume that we have $n$ data points $\{\xh_1,\dots,\xh_n\}$
%
%Let us assume that we have $n$ nodes and $K$ clusters $\Cc_k,k \in [K]$ with cluster centers $q_k \in \reals^d$. Let $z_i \in [K]$ be the true label of node $i$ and $u_i = q_{z_i}$ be the cluster center of node $i$, for $i \in [n]$. We let $U \in \reals^{n \times d}$ have rows $u_i^T$. We also have estimates $\uh_i$ of $u_i$. We let $\Uh$ be the matrix with rows $\uh_i^T$.
%
%Let us assume the following about cluster sizes: $|\Cc_k| \in [\nmin, \beta \nmin]$, for $k \in [K]$.
%
%We also assume $\mnorm{\Uh - U}_F \le \eps$. Note that this means $\sum_i \vnorm{\uh_i - u_i}^2 \le \eps^2$.
%
%Let us define a distance on $[n]$, i.e., on node indices, by $d(i,j) := \vnorm{\uh_i - \uh_j}$. Consider the following algorithm, a variant of the one in~\cite{Gao2015a}:

\begin{algorithm}[t]
	\caption{\kmeans replacement}
	\label{alg:kmeans:replace}
	\begin{algorithmic}[1]
		%\Procedure{CH\textendash Election}{}
		\Require Pairwise distance $d(i,j)$, $i,j \in [n]$ and radius $\rho$.
		\State $S \gets [n]$
		\For{$r = 1,\dots, \kk$}
			\State For every $i \in S$, let $ B_d(i;\rho) := \{j \in S:\; d(i,j) \le \rho\}$.
			\State Pick $i_0 \in S$ that maximizes $i \mapsto |B_d(i;\rho) |$.
			\State Let $\Ch_r = B_d(i_0;\rho)$.
			\State $S \gets S \setminus \Ch_r$.
		\EndFor
		\Ensure Return clusters $\Ch_r: 1,\dots,\kk$ and output remaining $S$ as unlabeled.
	\end{algorithmic}
\end{algorithm}

\begin{thm}\label{thm:kmeans:replace}
	Consider the cluster model above and let  $n_r = n_r(X)$, $\ninf = \ninf(X)$ and $\delinf = \delinf(X)$. Assume that we have approximate data $\xh_1,\dots,\xh_n$ such that $\sum_{i=1}^n d(x_i,\xh_i)^2 \le \eps^2$. In addition, assume that for some $\gamma \in (0,1)$ and $\beta \ge 1$:
	\begin{itemize}
		\item[(i)] $n_r \le \beta \ninf$ for all $r \in [k]$ \quad(Clusters are $\beta$-balanced.),
		%\item[(i)] $\vnorm{q_k-q_\ell} \ge \delta$ for $k \neq \ell$ (Centers are well-separated.),
		\item[(ii)] $\displaystyle \frac{2\eps}{\sqrt{ \gamma \ninf }} < \frac{\delinf}3$ \quad  ($\eps^2$ small enough compared to $\ninf \delinf^2$.),
		\item[(iii)] $\xi\beta+ \gamma < 1-\gamma$ where $\xi := \gamma/(1-\gamma)$. (Gamma small enough relative to $\beta$.)
	\end{itemize}
	Let $M_n(\rho)$ be the (average) misclassification rate of  Algorithm~\ref{alg:kmeans:replace} with input radius $\rho$. Then,
	\begin{align*}
		M_n(\rho) \le \frac{8 \eps^2}{n \rho^2} ,\quad \forall \rho \in \Big[\frac{2\eps}{\sqrt{\gamma \ninf}}, \frac\delinf3\Big).
	\end{align*}
%	Take $r \in [2\eps/\sqrt{M\gamma}, \delinf/3)$. Then, Algorithm~\ref{alg:kmeans:replace} with this choice of $r$ has misclassification rate $\le 8\eps^2/ (nr^2)$.
	
\end{thm}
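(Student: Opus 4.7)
My plan is to introduce a good/bad dichotomy on the nodes, show that each round of the greedy algorithm picks a fresh true cluster, and then count the mislabeled nodes directly.

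First, I would define the bad set $B := \{i \in [n] : d(\xh_i, x_i) > \rho/2\}$ and, for each $r \in [\kk]$, the good part of the true cluster $G_r := \{i : x_i = q_r\} \setminus B$. Markov's inequality applied to $\sum_i d(\xh_i, x_i)^2 \le \eps^2$ gives $|B| \le 4\eps^2/\rho^2$, and assumption~(ii) upgrades this to $|B| \le \gamma \ninf$. The triangle inequality combined with $\rho < \delinf/3$ yields two structural facts: for $i,j \in G_r$ one has $d(\xh_i,\xh_j) \le \rho$, while for $i \in G_r$, $j \in G_{r'}$ with $r \neq r'$ one has $d(\xh_i,\xh_j) \ge \delinf - \rho > 2\rho$. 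Hence for every $i \in [n]$ the ball $B_d(i;\rho)$ meets at most one set $G_r$; denoting that cluster (when it exists) by $r(i)$, one gets $B_d(i;\rho) \subseteq G_{r(i)} \cup B$.

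Next, writing $r_s := r(i^{(s)})$, and letting $\Ch_s$, $m_s := |\Ch_s|$, $b_s := |\Ch_s \cap B|$ track round $s$, I would prove by induction on $s$ that $r_1,\ldots,r_\kk$ are pairwise distinct. The inductive hypothesis together with $\Ch_t \subseteq G_{r_t} \cup B$ implies that $G_\ell \cap S_s = G_\ell$ for every still unclaimed $\ell$ at the start of round $s$; picking any good $i \in G_\ell$ then shows $m_s \ge |G_\ell| \ge \ninf - |B| \ge (1-\gamma)\ninf$. Applied with $\ell = r_s$ (unclaimed), this gives $m_s \ge |G_{r_s}|$, which together with $|\Ch_s \cap G_{r_s}| = m_s - b_s$ yields the pivotal bound $|G_{r_s} \setminus \Ch_s| \le b_s$. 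If round $s+1$ were to revisit some $r_t$ with $t \le s$, then $\Ch_{s+1} \subseteq (G_{r_t} \setminus \Ch_t) \cup (B \cap S_{s+1})$, and since $b_t$ bad nodes were already absorbed by $\Ch_t$, $m_{s+1} \le b_t + (|B| - b_t) = |B| \le \gamma \ninf$, contradicting $m_{s+1} \ge (1-\gamma)\ninf$ under condition~(iii) (which implies $\gamma < 1/2$). So $(r_1,\ldots,r_\kk)$ is a permutation of $[\kk]$.

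Finally, pairing each $\Ch_r$ with $G_{r_r}$, the mislabeled members of $\Ch_r$ are exactly $\Ch_r \cap B$, contributing at most $\sum_r b_r \le |B|$ in total. For the unlabeled remainder $S_{\mathrm{final}} := [n] \setminus \bigcup_r \Ch_r$, the identity $\sum_r m_r = \sum_r (|G_{r_r} \cap \Ch_r| + b_r)$ combined with $\sum_r |G_{r_r}| = n - |B|$ and $|G_{r_r} \setminus \Ch_r| \le b_r$ gives $\sum_r m_r \ge n - |B|$, so $|S_{\mathrm{final}}| \le |B|$. Adding the two contributions and using $|B| \le 4\eps^2/\rho^2$ yields $n\, M_n(\rho) \le 2|B| \le 8\eps^2/\rho^2$, which is the claim. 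The main obstacle is the refined inequality $|G_{r_s} \setminus \Ch_s| \le b_s$: a crude replacement by $|B|$ would propagate a factor of $\kk$ into both the revisit contradiction and the unlabeled count, destroying the rate. Tying the leftover good mass in each round to the specific bad mass absorbed by that round is what keeps every quantity linear in $|B|$ and delivers the clean constant $8$.
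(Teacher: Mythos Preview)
Your proof is correct and follows the same blueprint as the paper's: define the bad set, show by induction that the greedy rounds claim pairwise distinct true clusters, and then count mislabeled plus unlabeled nodes as at most $2|B| \le 8\eps^2/\rho^2$. The one substantive difference lies in the ``no revisit'' step. The paper bounds the leftover good mass of a depleted cluster crudely by $|T_r \setminus \Ch_r| \le \xi|T_r| \le \xi\beta\ninf$ and then uses assumptions~(i) and~(iii) in full to conclude that revisiting gives a ball of size at most $(\xi\beta+\gamma)\ninf < (1-\gamma)\ninf$. Your round-by-round bookkeeping of $b_s = |\Ch_s \cap B|$ yields the sharper inequality $|G_{r_s}\setminus\Ch_s| \le b_s$, and the revisit bound telescopes to just $|B| \le \gamma\ninf$; hence you only need $\gamma < 1/2$ (a consequence of~(iii)), and assumption~(i) is never invoked. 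So your argument is actually a mild strengthening of the paper's.

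Two small points of exposition: you should explicitly dispose of the case where the ball around $i^{(s)}$ meets \emph{no} $G_r$ (then $\Ch_s \subseteq B \cap S_s$ and the same contradiction $m_s \le \gamma\ninf < (1-\gamma)\ninf$ applies); and the logical order of the inductive step should first establish that $r_s$ exists and is unclaimed, and only then apply the bound $m_s \ge |G_\ell|$ with $\ell = r_s$ to derive $|G_{r_s}\setminus\Ch_s| \le b_s$. Also, the mislabeled members of $\Ch_s$ are a \emph{subset} of $\Ch_s \cap B$ (bad nodes that happen to lie in $\Cc_{r_s}$ are correctly labeled), not exactly equal, but this does not affect the bound.
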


Applying the algorithm with $\rho \ge \alpha \delinf$ for $\alpha < 1/3$ we obtain the misclassication bound $c_\alpha \,\eps^2/(n\delinf^2)$ where $c_\alpha = 8/\alpha^2$. Thus, Algorithm~\ref{alg:kmeans:replace} with a proper choice of the radius $\rho$ satisfies the desired bound~\eqref{eq:lqc:kmeans} of the \kmeans step.

%Note that we would like to take $r$ closer to the upper end of the range. (At the lower range we get a rate $O(M/n) = O(1/K)$, which is not good, assuming $M \sim n/K$.) Taking $r= \alpha \delinf$ for $\alpha < 1/3$, we obtain the rate $(8\alpha^{-2}) (\eps/\delinf)^2n^{-1}$.

\begin{proof}[Proof of Theorem~\ref{thm:kmeans:replace}] The proof follows the argument in~\cite{gao2017achieving}.
	 As in the proof of Proposition~\ref{prop:kmeans:misclass}, 	let $\Cc_r$ denote the $r$th cluster of $X$, having center $q_r = q_r(X)$. We have $|\Cc_r| = n_r$. Let $x_i$ and $\xh_i$ be the $i$th row of $X$ and $\Xh$, respectively, and let
	\begin{align*}
	T_r := \{i \in \Cc_r:\; \dr(\xh_i,q_r) < \rho/2\} = \{i \in \Cc_r:\; \dr(\xh_i,x_i) <  \rho/2\}
	\end{align*}
	using $x_i = q_r$ for all $i \in \Cc_r$ which holds by definition. $\{T_r\}$ are disjoint and clearly $T_r \subset \Cc_r$. Let $T := \biguplus_r T_r$, a disjoint union, and $T^c = [n] \setminus T$. We have
	\begin{align}\label{eq:Tc:upper:bound}
	|T^c|  \rho^2/4 \;\le\; \sum_{i\, \in\, T^c} \dr(\xh_i,x_i)^2 \le \eps^2
		 \implies  |T^c| \le 4\eps^2/\rho^2.
	\end{align} 
%		
%	Let 
%	\begin{align*}
%		T_k := \{i \in \Cc_k:\; \vnorm{\uh_i - q_k} \le r/2\} = \{i \in \Cc_k:\; \vnorm{\uh_i - u_i} \le r/2\}
%	\end{align*}
%	$\{T_k\}$ are disjoint and clearly $T_k \subset \Cc_k$. Let $T := \uplus_k T_k$, a disjoint union. For any $i \in T^c$, we have $\vnorm{\uh_i - u_i} > r/2$, hence 
%	\begin{align*}
%	|T^c| (r^2/4) \le \sum_{i \in T^c} \vnorm{\uh_i - u_i}^2 \le \eps^2 \implies |T^c| \le 4\eps^2/r^2.
%	\end{align*}
	As a consequence of assumption (ii) and our choice of $\rho$, we have $4\eps^2/(\ninf \rho^2) \le \gamma$, hence
	\begin{align}\label{eq:Tk:size:lower:bound}
	|T_r| = |\Cc_r| - |\Cc_r \setminus T_r| \;\ge\; |\Cc_r| - |T^c| 
	\;\ge\; \ninf\Big(1- \frac{4\eps^2}{\ninf \rho^2} \Big) \ge \ninf (1-\gamma)
	\end{align}
	for all $r \in [\kk]$.
	On the other hand, $|T^c| \le \gamma \ninf$. In particular, combining the two estimates
	\begin{align}\label{eq:Tc:Tr:bound}
		|T^c| \le \xi \,|T_r|, \quad \forall r \in [\kk]
	\end{align}
	where $\xi = \gamma/(1-\gamma)$. These size estimates will be used frequently in the course of the proof. 
	
	\medskip
	Recall that $d(i,j) := d(\xh_i,\xh_j), \, i,j \in [n]$, the collection of pairwise distances between the data points $\xh_1,\dots,\xh_n$. Thus, with some abuse of notation, $(i,j) \mapsto d(i,j)$ defines a pseudo-metric on $[n]$ (and a proper metric if $\{\xh_i\}$ are distinct).  For any two subsets $A,B \subset [n]$ we write $d(A,B) = \inf\{d(i,j): i \in A,\, j \in B\}$. For any $i \in [n]$, let $d(i,A) = d(\{i\},A)$.
	
	\medskip
	We say that node $i_0$ is near $T$ if $d(i_0, T) \le \rho$, i.e., $i_0$ belongs to the $\rho$-enlargement of $T$. Similarly, we say that $i_0$ is near $T_r$ if $d(i_0,T_r) \le \rho$ and far from $T_r$ otherwise. Note that $i_0$ can be near at most one of $T_r, r \in [\kk]$. This is since $d(T_r,T_\ell) \ge \delinf -\rho$ for $r\neq \ell$, and we are assuming $\delinf > 3\rho$.
	% In other words, if $i_0$ is near $T_k$, then it is far from $T_\ell, \ell\neq k$. 
	In fact, \emph{$i_0$ is near $T$ iff $i_0$ is near exactly one of $T_r, r \in [\kk]$}.

	To understand Algorithm~\ref{alg:kmeans:replace}, let us assume that we are at some iteration of the algorithm and we are picking the center $i_0$ and the corresponding cluster $\Ch := \{j :\; d(j,i_0) \le \rho \}$. One of the following happens: 
	\begin{itemize}
		\item[(a)] We pick the new center $i_0 \in T_r$ for some $r$, in which case $\Ch$ will include the entire $T_r$, none of $T_\ell, \ell \neq r$, and perhaps some of $T^c$. That is, $\Ch \supset T_r$ and $\Ch \cap T_\ell = \emptyset$ for $\ell \neq r$.
		
		\item[(b)] We pick $i_0$ \emph{near} $T_r$ for some $r$. In this case,  $|\Ch| \ge |T_r|$, otherwise any member of $T_r$ would have created a bigger cluster by part~(a) above.  Now, $\Ch$ cannot contain any of $T_\ell, \ell\neq r$, because $i_0$ is far from those if it is near $T_r$. Hence, $\Ch \subset T_r \cup T^c$. Since $|T^c| \le \xi |T_r|$ by~\eqref{eq:Tc:Tr:bound}, and $|\Ch| \ge |T_r|$, we have  $|\Ch \cap T_r| \ge (1-\xi) |T_r|$. That is, $\Ch$ contains a large fraction of $T_r$.
		
%		Now, $\Ch$ should contain a large fraction of $T_r$ (argued shortly) and cannot contain any of $T_\ell, \ell\neq r$, because $i_0$ is far from those if it is near $T_r$. That is, $\Ch \subset T_r \cup T^c$. But $|T^c| \le \gamma \ninf \le \xi |T_k|$ where $\xi = \gamma / (1-\gamma)$.% and we have used~\eqref{eq:Tk:size:lower:bound}. 
		
	\end{itemize}
	If either of the two cases above happen, we say that $T_r$ is depleted, otherwise it is intact. If $T_r$ is depleted, it will not be revisited in future iterations, as long as other intact $T_\ell, \ell \neq r$ exist. To see this, first note that $|T_r \cap \Ch^c| \le \xi |T_r| \le \xi \beta \ninf$, using assumption~(i). Taking $i_0$ on or near $T_r$ in a future iteration will give us a cluster of size at most $(\xi \beta + \gamma) \ninf < (1-\gamma) \ninf$ (by assumption~(iii)) which is less that $|T_\ell|$ for an intact cluster. 
	
%	 If $T_r$ is depleted, there is not much left in it for future iterations to be considered. More precisely, this $T_r$ will not be revisited in future iterations, as long as other intact $T_\ell, \ell \neq r$ exist. 
	
	\medskip
	To simplify notation, if either of (a) and (b) happen, i.e., we pick cluster center $i_0$ near $T_r$ for some $r$, we name the corresponding cluster $\Ch_r$. This is to avoid carrying around a permutation of cluster labels different than the original one, and is valid since each $T_r$ is visited at most once by the above argument. (In fact, each is visited exactly once, as we argue below.)
	%
	%\medskip
	That last possibility is
	\begin{itemize}
		\item[(c)] We pick $i_0$ \emph{far} from any $T_r$, that is $d(i_0,T) > \rho$. This gives $\Ch \subset T^c$, hence $|\Ch| \le |T^c| \le \gamma \ninf < (1-\gamma) \ninf \le |T_\ell|$ for any intact $T_\ell$. Thus as along as there are intact $T_\ell$, this case does not happen.
	\end{itemize}

	The above argument gives the following picture of the evolution of the algorithm: At each step $t=1,\dots,\kk$, we pick $i_0$ near $T_r$ for some previously unvisited $r$, making it depleted, creating estimated cluster $\Ch_r$ and proceeding to the next iteration. After the $\kk$-th iteration all $T_\ell, \ell \in [\kk]$ will be depleted. We have $|\Ch_r| \ge |T_r|$, and $\Ch_r \subset T_r \cap T^c$ for all $r \in [\kk]$. 
	
	\medskip
	By construction $\{\Ch_\ell\}$ are disjoint. Let $\Ch := \biguplus_{\ell \in [\kk]} \Ch_\ell$, and note that $|\Ch| \ge |T|$ hence $|\Ch^c| \le |T^c|$. Since $\Ch_\ell \cap T_r = \emptyset $ for $\ell \neq r$, we have $T_r \subset \bigcap_{\ell \neq r} \Ch_\ell^c$, hence $T_r \cap \Ch_r^c \subset \Ch^c$. 
	All the misclassified or unclassified nodes produced by the algorithm are contained in $[\bigcup_r (T_r \cap \Ch_r^c)] \cup T^c$ which itself is contained in $\Ch^c \cup T^c$. Hence, the misclassification rate is bounded above by
	\begin{align*}
	\frac{1}n |\Ch^c \cup T^c| \le 	\frac{1}n (|\Ch^c| + |T^c|) \le \frac{2}n| T^c| \le \frac{8\eps^2}{n \rho^2}.
	\end{align*}
	where we have used~\eqref{eq:Tc:upper:bound}. The proof is complete.
\end{proof}

	\begin{rem}
	The last part of the argument can be made more transparent as follows: Each $\Ch_r$ consists of two disjoint part, $\Ch_r \cap T_r$ and $\Ch_r \cap T^c$. We have $|\Ch_r \cap T^c  | \ge |T_r \setminus \Ch_r|$ (equivalent to $|\Ch_r| \ge |T_r|$). Then,
	\begin{align*}
	\sum_{k} |T_r \setminus \Ch_r| \le \sum_r |\Ch_r \cap T^c  | = |\Ch \cap T^c| \le |T^c|
	\end{align*}
	and total misclassifications are bounded by $\sum_{k} |T_r \setminus \Ch_r| + |T^c|$. 
\end{rem}

%\section{Proofs}\label{app:proofs}

\section{Auxiliary lemmas}\label{app:aux}

\begin{lem}\label{lem:proj:align}
	Let $Z,Y \in \ort{n}k$ and let $\proj_{Z}=ZZ^T$ and $\proj_{Y}=YY^T$ be the corresponding projection operators. We have
	\begin{align*}
		\min_{Q \,\in\, \ort{k}k} \fnorm{Z - YQ} \;\le\; \fnorm{\proj_Z - \proj_Y}.
	\end{align*} 
\end{lem}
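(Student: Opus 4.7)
The plan is to use the orthogonal Procrustes alignment. Let $Y^T Z = U \Sigma V^T$ be a full SVD, where $U, V \in \ort{k}{k}$ and $\Sigma = \diag(\sigma_1,\dots,\sigma_k)$. Since $Y, Z \in \ort{n}{k}$, the operator norm bound $\opnorm{Y^T Z} \le \opnorm{Y^T}\opnorm{Z} \le 1$ forces $\sigma_i \in [0,1]$ for every $i$.

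For the left-hand side, I would propose the candidate $Q^* := U V^T \in \ort{k}{k}$. Expanding the Frobenius norm and using $Z^T Z = Y^T Y = I_k$,
\begin{align*}
\fnorm{Z - Y Q^*}^2 = 2k - 2\tr(Q^{*T} Y^T Z) = 2k - 2\tr(V U^T \cdot U\Sigma V^T) = 2\sum_{i=1}^k (1 - \sigma_i),
\end{align*}
so the minimum over $Q \in \ort{k}{k}$ is at most $2\sum_i (1-\sigma_i)$ (this is actually an equality by Procrustes, but an upper bound is all that is needed).

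For the right-hand side, I would compute
\begin{align*}
\fnorm{\proj_Z - \proj_Y}^2 = \tr\bigl((\proj_Z - \proj_Y)^2\bigr) = \tr(\proj_Z) + \tr(\proj_Y) - 2\tr(\proj_Z \proj_Y) = 2k - 2\fnorm{Y^T Z}^2 = 2\sum_{i=1}^k (1 - \sigma_i^2),
\end{align*}
using $\proj_Z^2 = \proj_Z$, $\proj_Y^2 = \proj_Y$, and $\tr(ZZ^T YY^T) = \fnorm{Y^T Z}^2$.

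The proof then reduces to the scalar inequality $1 - \sigma_i \le 1 - \sigma_i^2 = (1-\sigma_i)(1+\sigma_i)$ for $\sigma_i \in [0,1]$, which is equivalent to $\sigma_i \ge 0$ on the nontrivial range $\sigma_i < 1$. Summing over $i$ yields $\min_Q \fnorm{Z - YQ}^2 \le \fnorm{\proj_Z - \proj_Y}^2$ and taking square roots completes the proof. There is no real obstacle here; the only thing to get right is confirming $\sigma_i \le 1$ so that the linear bound on $1-\sigma_i$ is dominated by the quadratic one $1-\sigma_i^2$, which is exactly where the fact that $Y$ and $Z$ have orthonormal columns enters.
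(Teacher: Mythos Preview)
Your proof is correct and follows essentially the same route as the paper: compute both sides in terms of the singular values $\sigma_i$ of $Y^T Z$ (the paper uses $Z^T Y$, which is immaterial), obtain $\min_Q \fnorm{Z-YQ}^2 = 2\sum_i(1-\sigma_i)$ and $\fnorm{\proj_Z-\proj_Y}^2 = 2\sum_i(1-\sigma_i^2)$, and finish with $\sigma_i\le 1 \Rightarrow \sigma_i^2 \le \sigma_i$. The only cosmetic difference is that you plug in the Procrustes optimizer $Q^*=UV^T$ directly to upper-bound the minimum, whereas the paper carries out the maximization $\max_Q \tr(Z^TYQ)=\nucnorm{\Sigma}$ explicitly; either way suffices.
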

\begin{proof}
	We first note that $\fnorm{\proj_Z}^2 = \tr(\proj_Z^2) = \tr(\proj_Z) = k$ (since projections are idempotent), and $\fnorm{Z}^2 = \tr(Z^T Z) = \tr(\proj_Z) = k$. Let $Z^T Y = U \Sigma V^T$ be the SVD of $Z^T Y$ where $U,V \in \ort{k}k$ and $\Sigma = \diag(\sigma_1,\dots,\sigma_k) \succeq 0$.  Then, using the change of variable $O = V^T Q U$,
	\begin{align*}
		\frac12 \min_Q \fnorm{Z - YQ}^2 &= k - \max_Q \tr(Z^T Y Q) \\
		&= k - \max_{O \;\in\; \ort{k}k} \tr(\Sigma O) = k - \nucnorm{\Sigma},
	\end{align*}
	where $\nucnorm{\Sigma} = \sum_i \sigma_i$ is the nuclear norm of $\Sigma$.
	To see the last equality, we note that since $O$ is orthogonal, we have %$\sum_{i} O_{ij}^2 = 1$ for all $j$. In particular,
	$|O_{ii}| \le 1$ for all $i$, hence $\max_{O} \tr(\Sigma O) \le \max_{ \forall i, \; |O_{ii}| \le1} \sum_{i} \sigma_i O_{ii} = \sum_{i} \sigma_i$ by the duality of $\ell_1$ and $\ell_\infty$ norms and $\sigma_i \ge 0$. The equality is achieved by  $O = I_k$.
	 On the other hand
	\begin{align*}
		\frac12\fnorm{\proj_Z - \proj_Y}^2 = k -  \tr(\proj_Z\proj_Y) = k -  \fnorm{Z^T Y}^2 = k - \fnorm{\Sigma}^2. %\tr(\Sigma^2).
	\end{align*}
	Since $\opnorm{\Sigma} = \opnorm{Z^T Y} = \opnorm{Z} \opnorm{Y} \le 1$, we have $\sigma_i \le 1$ for all $i$. It follows that $\fnorm{\Sigma}^2 \le \nucnorm{\Sigma}$ completing the proof.
\end{proof}

 \pagebreak
 
 \end{document}